\theoremstyle{definition}
\newtheorem{defn}{Definition}
\theoremstyle{plain}
\newtheorem{teo}{Theorem}
\newtheorem{lem}{Lemma}
\theoremstyle{remark}
\newtheorem{rem}{Remark}
\newtheorem{ex}{Example}
\newcommand{\Q}{\mathbf{Q}}
\newcommand{\deq}{\mathrel{:=}}
\newcommand{\isom}{\simeq}
\newcommand{\p}[1]{\left(#1\right)}
\newcommand{\de}{\mathrm{d}}
\newcommand{\bb}[1]{\mathbb{#1}}
\newcommand{\app}[3]{#1\colon #2\to #3}
\newcommand{\ol}[1]{\overline{#1}}
\newcommand{\alg}{\mathrm{alg}}
\newcommand{\op}{\mathrm{op}}
\newcommand{\DR}{\mathrm{DR}}
\mathchardef\mhyphen="2D
\newcommand{\bimod}[1]{{#1\mhyphen\mathbf{Bimod}}}
\newcommand{\comm}[2]{[#1,#2]}
\newcommand{\rist}[2]{\left. #1\right|_{#2}}
\newcommand{\floor}[1]{\left\lfloor #1\right\rfloor}
\newcommand{\ceil}[1]{\left\lceil #1\right\rceil}
\newcommand{\trasp}[1]{\prescript{t}{}{#1}}
\newcommand{\falg}[1]{\bb{C}\langle #1\rangle}
\newcommand{\res}{\mathrm{res}}
\newcommand{\red}{\mathrm{red}}
\DeclareMathOperator{\id}{id}
\DeclareMathOperator{\diag}{diag}
\DeclareMathOperator{\supp}{supp}
\DeclareMathOperator{\tr}{Tr}
\DeclareMathOperator{\GL}{GL}
\DeclareMathOperator{\ASL}{ASL}
\DeclareMathOperator{\PGL}{PGL}
\DeclareMathOperator{\Mat}{Mat}
\DeclareMathOperator{\Aut}{Aut}
\DeclareMathOperator{\TAut}{TAut}
\DeclareMathOperator{\Aff}{Aff}
\DeclareMathOperator{\ST}{Tri}
\newcommand{\STl}{\ST^{\ell}}
\DeclareMathOperator{\oST}{opTri}
\newcommand{\oSTl}{\oST^{\ell}}
\DeclareMathOperator{\Rep}{Rep}
\DeclareMathOperator{\Der}{Der}
\begin{document}

\title{On a family of quivers related to the Gibbons-Hermsen system}
\author{Alberto Tacchella}
\address{ICMC - Universidade de S\~ao Paulo\\
  Avenida Trabalhador S\~ao-carlense, 400\\
  13566-590 S\~ao Carlos - SP, Brasil}
\email{altacch@gmail.com}

\begin{abstract}
  We introduce a family of quivers \(Z_{r}\) (labeled by a natural number
  \(r\geq 1\)) and study the non-commutative symplectic geometry of the
  corresponding doubles \(\Q_{r}\). We show that the group of non-commutative
  symplectomorphisms of the path algebra \(\bb{C}\Q_{r}\) contains two copies
  of the group \(\GL_{r}\) over a ring of polynomials in one indeterminate,
  and that a particular subgroup \(\mathcal{P}_{r}\) (which contains both of
  these copies) acts on the completion \(\mathcal{C}_{n,r}\) of the phase
  space of the \(n\)-particles, rank \(r\) Gibbons-Hermsen integrable system
  and connects each pair of points belonging to a certain dense open subset of
  \(\mathcal{C}_{n,r}\). This generalizes some known results for the cases
  \(r=1\) and \(r=2\).
\end{abstract}

\maketitle

\section{Introduction}

\subsection{Gibbons-Hermsen manifolds}

For every \(n,r\in \bb{N}\) let us denote by \(\Mat_{n,r}(\bb{C})\) the
complex vector space of \(n\times r\) matrices with entries in \(\bb{C}\). We
consider the space
\begin{equation}
  \label{eq:def-Vnr}
  V_{n,r}\deq \Mat_{n,n}(\bb{C})\oplus \Mat_{n,n}(\bb{C})\oplus
  \Mat_{n,r}(\bb{C})\oplus \Mat_{r,n}(\bb{C}).
\end{equation}
Using the identification between \(\Mat_{n,r}(\bb{C})\) and the dual of
\(\Mat_{r,n}(\bb{C})\) provided by the bilinear form
\begin{equation}
  \label{eq:trace}
  (A,B)\mapsto \tr AB
\end{equation}
we can view \(V_{n,r}\) as the cotangent bundle of the vector space
\(\Mat_{n,n}(\bb{C})\oplus \Mat_{r,n}(\bb{C})\). In other words, denoting by
\((X,Y,v,w)\) a point in \(V_{n,r}\) we think of the pair \((Y,v)\) as a
cotangent vector applied at the point \((X,w)\). It follows that on
\(V_{n,r}\) a canonical (holomorphic) symplectic form is defined:
\begin{equation}
  \label{eq:omega}
  \omega(X,Y,v,w) = \sum_{i,j=1}^{n} \de y_{ji} \wedge \de x_{ij} +
  \sum_{i=1}^{n} \sum_{\alpha=1}^{r} \de v_{i\alpha}\wedge \de w_{\alpha i}.
\end{equation}
The group \(\GL_{n}(\bb{C})\) acts on \(V_{n,r}\) by
\[ g.(X,Y,v,w) = (gXg^{-1}, gYg^{-1}, gv, wg^{-1}). \]
This action is Hamiltonian, and the corresponding moment map
\(\app{\mu}{V_{n,r}}{\mathfrak{gl}_{n}(\bb{C})}\) is
\begin{equation}
  \label{eq:mom-mu}
  \mu(X,Y,v,w) = [X,Y] - vw.
\end{equation}
For every \(\tau\in \bb{C}^{*}\) the action of \(\GL_{n}(\bb{C})\) on
\(\mu^{-1}(\tau I)\) is free; the corresponding Marsden-Weinstein quotient,
\begin{equation}
  \label{eq:Cnr}
  \mathcal{C}_{n,r}\deq \mu^{-1}(\tau I)/\GL_{n}(\bb{C}),
\end{equation}
is a smooth symplectic manifold of dimension \(2nr\) that we call the
(\(n\)-particle, rank \(r\)) \emph{Gibbons-Hermsen manifold}. Different
choices of \(\tau\) yield isomorphic symplectic manifolds; from now on we
simply suppose that some choice has been made and stick to it for the rest of
the paper.

The manifold \(\mathcal{C}_{n,r}\) can be seen naturally as a completion of
the phase space of the \(n\)-particle, rank \(r\) integrable system introduced
by Gibbons and Hermsen in \cite{gh84} as a generalization of the well-known
rational Calogero-Moser model. In more detail, let us denote by
\(\mathcal{C}_{n,r}'\) the subset of \(\mathcal{C}_{n,r}\) consisting of the
orbits of those quadruples in which the matrix \(X\) has \(n\) distinct
eigenvalues, to be interpreted as the positions of the \(n\) particles on the
complex plane. In each such orbit we can find a point \((X,Y,v,w)\) such that
\(X\) is diagonal, say \(X = \diag (x_{1}, \dots, x_{n})\), and
\[ Y =
\begin{pmatrix}
  y_{1} & \frac{v_{1\bullet}w_{\bullet 2}}{x_{1}-x_{2}} & \hdots &
  \frac{v_{1\bullet}w_{\bullet n}}{x_{1}-x_{n}}\\
  \frac{v_{2\bullet}w_{\bullet 1}}{x_{2}-x_{1}} & y_{2} & \ddots &
  \vdots\\
  \vdots & \ddots & \ddots & \vdots\\
  \frac{v_{n\bullet}w_{\bullet 1}}{x_{n}-x_{1}} & \hdots & \hdots & y_{n}
\end{pmatrix} \]
where \(v_{i\bullet}\) denotes the \(r\)-components row vector given by the
\(i\)-th row of the matrix \(v\) and similarly \(w_{\bullet j}\) denotes the
\(r\)-components column vector given by the \(j\)-th column of the matrix
\(w\); the diagonal entries \((y_{1}, \dots, y_{n})\in \bb{C}^{n}\) are free.
If we fix the ordering of the eigenvalues of \(X\), this representative is
unique up to the action of a diagonal matrix \(\diag (\lambda_{1}, \dots,
\lambda_{n})\in \GL_{n}(\bb{C})\); such a matrix fixes the parameters
\((x_{1}, \dots, x_{n})\) and \((y_{1}, \dots, y_{n})\) and acts as follows on
the vectors \(v_{i\bullet}\) and \(w_{\bullet i}\):
\begin{equation}
  \label{eq:act-vw}
  v_{i\bullet}\mapsto \lambda_{i}v_{i\bullet} \qquad w_{\bullet i}\mapsto
  w_{\bullet i} \lambda_{i}^{-1}.
\end{equation}
The restriction of the symplectic form \eqref{eq:omega} on
\(\mathcal{C}_{n,r}'\) reads
\[ \rist{\omega}{\mathcal{C}_{n,r}'} = \sum_{i=1}^{n} \left( \de y_{i}\wedge
  \de x_{i} + \sum_{\alpha=1}^{r} \de v_{i\alpha} \wedge \de w_{\alpha i}\right) \]
and this shows that the coordinates
\[ (x_{1}, \dots, x_{n}, w_{11}, \dots, w_{rn}, y_{1}, \dots, y_{n}, v_{11},
\dots, v_{nr}) \]
are canonical. We conclude that \(\mathcal{C}_{n,r}'\) can be interpreted as
the phase space of a system of \(n\) point particles of equal mass located at
the points \(x_{i}\) with momenta \(y_{i}\), each particle having some
internal degrees of freedom parametrized by a \(r\)-component
covector-vector pair \((v_{i\bullet},w_{\bullet i})\) living in the complex
manifold of dimension \(2r-2\) defined by
\[ \mathcal{V}_{r}\deq \set{(\xi, \eta)\in \Mat_{1,r}(\bb{C})\times
  \Mat_{r,1}(\bb{C}) |  \xi \eta = -\tau}/\bb{C}^{*}, \]
where the action of \(\lambda\in \bb{C}^{*}\) is given by \(\lambda.(\xi,\eta)
= (\lambda\xi, \eta\lambda^{-1})\), as in \eqref{eq:act-vw}. The condition
\(v_{i\bullet} w_{\bullet i} = -\tau\) comes from the diagonal part of the
moment map equation \([X,Y] - vw = \tau I\); when \(r=1\) this completely
fixes the internal degrees of freedom, and one recovers the usual description
for the (complexified) rational Calogero-Moser system \cite{wils98}.

The \emph{Gibbons-Hermsen hierarchy} is defined by the family of
\(\GL_{n}(\bb{C})\)-invariant Hamiltonians
\begin{equation}
  \label{eq:20}
  J_{k,m}\deq \tr Y^{k} v m w
\end{equation}
with \(k\in \bb{N}\), \(m\in \Mat_{r,r}(\bb{C})\). The equations of motion
induced by \(J_{k,m}\) are
\begin{equation}
  \label{eq:eq-mot}
  \begin{array}{l}
    \displaystyle \dot{X} = \sum_{i=1}^{k} Y^{k-i} vmw Y^{i-1}\\
    \displaystyle \dot{Y} = 0\\
    \dot{v} = Y^{k}vm\\
    \dot{w} = -mwY^{k}
  \end{array} 
\end{equation}
In particular \(J_{2,I}\) is (up to scalar factors) the Hamiltonian considered
by Gibbons and Hermsen in \cite{gh84}.

It is not difficult to see that the flows determined by the evolution
equations \eqref{eq:eq-mot} are complete on \(\mathcal{C}_{n,r}\); this is the
reason for calling this manifold a completion of the phase space
\(\mathcal{C}_{n,r}'\). As in the Calogero-Moser case, these extended flows
correspond to motions which have been ``analytically continued'' through the
collisions.

\subsection{$\mathcal{C}_{n,r}$ as a quiver variety}

In order to investigate the geometry of the manifolds \(\mathcal{C}_{n,r}\) it
is useful to consider them as particular examples of \emph{quiver varieties},
a notion introduced by Nakajima in \cite{naka94}.

For the reader's convenience let us recall that a \emph{quiver} is simply a
directed graph, possibly with loops and multiple edges. We will only consider
quivers with a finite vertex set, say \(I = \{1, \dots, n\}\). A (complex)
representation of such a quiver \(Q\) is specified by giving for each vertex
\(i\in I\) a finite-dimensional complex vector space \(V_{i}\) and for each
edge \(\app{\xi}{i}{j}\) a linear map \(\app{V_{\xi}}{V_{i}}{V_{j}}\). If we
denote by \(d_{i}\) the dimension of the vector space \(V_{i}\), the vector
\(\bm{d} = (d_{1}, \dots, d_{n})\in \bb{N}^{n}\) is called the
\emph{dimension vector} of the representation \(V\). Clearly, representations
of \(Q\) with dimension vector \(\bm{d}\) correspond to points in the complex
vector space
\[ \Rep(Q,\bm{d})\deq \bigoplus_{i,j\in I} \bigoplus_{\app{\xi}{i}{j}}
\Mat_{d_{j},d_{i}}(\bb{C}). \]
On this space there is an action of the group
\begin{equation}
  \label{eq:gl-d}
  \GL_{\bm{d}}(\bb{C})\deq (\prod_{i\in I} \GL_{d_{i}}(\bb{C}))/\bb{C}^{*},
\end{equation}
where \(\bb{C}^{*}\) is seen as the subgroup of \(n\)-tuples of the form
\((\lambda I_{d_{1}}, \dots, \lambda I_{d_{n}})\) for each \(\lambda\in
\bb{C}^{*}\). This action is defined as follows: for every arrow
\(\app{\xi}{i}{j}\) the action of \([g_{1}, \dots, g_{n}]\in
\GL_{\bm{d}}(\bb{C})\) sends the matrix \(V_{\xi}\) to the matrix
\(g_{j}V_{\xi}g_{i}^{-1}\). In other words, each factor
\(\GL_{d_{i}}(\bb{C})\) acts on the \(d_{i}\)-dimensional space \(V_{i}\) by
change of basis.

The quotient
\begin{equation}
  \label{eq:naive-quot}
  \Rep(Q,\bm{d})/\GL_{\bm{d}}(\bb{C})
\end{equation}
parametrizes isomorphism classes of representations of the quiver \(Q\) with
dimension vector \(\bm{d}\). Unfortunately, as a topological space it is
usually quite pathological (e.g. not Hausdorff).

To improve the situation, one replaces the quiver \(Q\) with its \emph{double}
\(\ol{Q}\), obtained by keeping the same vertices and adding for each arrow
\(\app{\xi}{i}{j}\) a corresponding arrow \(\app{\xi^{*}}{j}{i}\) going in the
opposite direction. Using the isomorphisms provided by the bilinear form
\eqref{eq:trace} we can identify the representation space
\(\Rep(\ol{Q},\bm{d})\) with the cotangent bundle to \(\Rep(Q,\bm{d})\); then
\(\Rep(\ol{Q},\bm{d})\) is naturally a symplectic vector space. Moreover, the
action of the group \eqref{eq:gl-d} on this space coincides with the cotangent
lift of its action on the base. It follows that this action is Hamiltonian and
admits an equivariant moment map
\[ \app{\mu}{\Rep(\ol{Q},\bm{d})}{\mathfrak{gl}_{\bm{d}}(\bb{C})}, \]
where \(\mathfrak{gl}_{\bm{d}}(\bb{C})\) is the Lie algebra of
\(\GL_{\bm{d}}(\bb{C})\) (which we identify with its dual using once again the
bilinear form \eqref{eq:trace}). The idea is now to replace the badly-behaved
quotient \eqref{eq:naive-quot} with a symplectic quotient
\[ M_{\nu}\deq \mu^{-1}(\nu)/G_{\nu}, \]
where \(\nu\) denotes a point in \(\mathfrak{gl}_{\bm{d}}(\bb{C})\) and
\(G_{\nu}\) the stabilizer of \(\nu\) with respect to the adjoint action of
\(\GL_{\bm{d}}(\bb{C})\). If the action of \(G_{\nu}\) on the inverse image
\(\mu^{-1}(\nu)\subseteq \Rep(\ol{Q},\bm{d})\) is free and proper then the
quotient \(M_{\nu}\) is itself a smooth manifold, with a symplectic form
induced from the one on \(\Rep(\ol{Q},\bm{d})\).

At first sight, the definition \eqref{eq:Cnr} of \(\mathcal{C}_{n,r}\) seems to fit
precisely this construction by taking as \(Q\) the quiver
\[ \xymatrix{
  \bullet_{1} \ar@(ul,dl)[] \ar[r] & \bullet_{2}} \]
whose double \(\ol{Q}\) is
\begin{equation}
  \label{eq:q-wrong}
  \xymatrix{
    \bullet_{1} \ar@(u,l)[] \ar@(l,d)[] \ar@/_/[r] & \bullet_{2} \ar@/_/[l]}
\end{equation}
and considering representations with dimension vector \((n,r)\); then
\(\Rep(\ol{Q},(n,r)) = V_{n,r}\). However, the group \eqref{eq:gl-d} reads in
this case
\[ \GL_{(n,r)}(\bb{C}) = (\GL_{n}(\bb{C})\times \GL_{r}(\bb{C}))/\bb{C}^{*}, \]
hence it differs from \(\GL_{n}(\bb{C})\) as long as \(r>1\). To remedy
this problem, one should take instead a \emph{family} of quivers
\((Q_{r})_{r\geq 1}\) such that the corresponding doubles, call them
\(\Q_{r}\), have \(r\) distinct arrows \(1\to 2\) and \(r\) distinct arrows
\(2\to 1\):
\begin{equation}
  \label{eq:Qbar}
  \Q_{r} = \quad
  \xymatrix{
    \bullet_{1} \ar@(u,l)[] \ar@(l,d)[] 
    \ar@/_1.2pc/[r]|{}="a" \ar@/_0.5pc/[r]|{}="b" \ar@{.}"a";"b"_{r} &
    \bullet_{2} \ar@/_1.2pc/[l]|{}="c" \ar@/_0.5pc/[l]|{}="d" 
    \ar@{.}"c";"d"^{r}}
\end{equation}
and consider their representations with dimension vector \((n,1)\). Then the
space \(\Rep(\Q_{r},(n,1))\) can again be identified with \(V_{n,r}\) by
``slicing'' the matrices \(v\) and \(w\) in the \(r\) column matrices
\((v_{\bullet 1}, \dots, v_{\bullet r})\) and in the \(r\) row matrices
\((w_{1\bullet}, \dots, w_{r\bullet})\), respectively. The transition from the
quiver \eqref{eq:q-wrong} to the quiver \eqref{eq:Qbar} may be interpreted
also as the introduction of a \emph{framing} in the sense of Crawley-Boevey
\cite{cb01}.

\subsection{Non-commutative symplectic geometry}

One of the advantages in adopting this new point of view is that we can now
apply to the manifolds \(\mathcal{C}_{n,r}\) the powerful tools of
\emph{non-commutative symplectic geometry}. This theory was introduced by
Kontsevich in \cite{kont93} and developed, in particular with applications to
quiver varieties, by Ginzburg in \cite{ginz01} and Bocklandt-Le Bruyn in
\cite{blb02}.

The basic idea of (algebraic) non-commutative geometry is to generalize the
well-known duality between commutative rings and affine schemes to the much
larger class of associative, but not necessarily commutative, algebras. In
Section \ref{s:rem} below we will briefly review the fundamentals of this
approach, and in particular how one can define, starting from an associative
algebra \(A\), a complex \(\DR^{\bullet}(A)\) of ``non-commutative
differential forms'' on \(A\).

The connection with quiver varieties comes from the observation that to every quiver
\(Q\) one can associate in a natural manner an associative algebra
\(\bb{C}Q\), the \emph{path algebra} (over \(\bb{C}\)) \emph{of} \(Q\). This
is the complex associative algebra which is generated, as a linear space, by
all the (oriented) paths in \(Q\) and whose product is given by composition of
paths, or zero when two paths do not compose (meaning that the ending point of
the first does not coincide with the starting point of the second).

Suppose now that \(Q\) is actually the double of some other quiver. Then, as
it will be explained in Section \ref{s:rem}, on the corresponding path algebra
\(\bb{C}Q\) there is a canonical non-commutative 2-form \(\omega\in
\DR^{2}(\bb{C}Q)\) that plays the same r\^ole of a symplectic form in the
usual (i.e., commutative) symplectic geometry. Moreover, the group of
``non-commutative symplectomorphisms'' of \(\bb{C}Q\), that is algebra
automorphisms of \(\bb{C}Q\) that preserve (in a suitable sense) the
above-mentioned symplectic form, naturally acts on every quiver variety
derived from \(Q\).

Thus we can hope to obtain further information about the family of manifolds
\(\mathcal{C}_{n,r}\) by studying how the group of non-commutative
symplectomorphisms of \(\Q_{r}\) acts on them. For the case \(r=2\) this was
done in the second part of the paper \cite{bp08}, where it is proved in
particular that this action is \emph{transitive}. However, in order to endow
\(\Q_{r}\) with a non-commutative symplectic form one must choose for each
\(r\geq 1\) a quiver \(Q_{r}\) such that its double coincides with \(\Q_{r}\).
In other words, one has to decide which of the arrows in \eqref{eq:Qbar} are
the ``unstarred'' ones. As pointed out in \cite{bp08}, this choice is
\emph{not} irrelevant: different choices will give (in general) different
symplectomorphism groups. In Section \ref{s:nc} we define a family of
``zigzag'' quivers \((Z_{r})_{r\geq 1}\) by continuing the pattern started in
\cite{bp08} for \(r=2\):
\[ 
Z_{1} = \xymatrix{
  1\ar@(ul,dl)[]_{a} & 2 \ar[l]_{x_{1}}}
\qquad
Z_{2} = \xymatrix{
  1 \ar@(ul,dl)[]_{a} \ar@/_/[r]_{y_{1}} & 2 \ar@/_/[l]_{x_{1}}}
\qquad
Z_{3} = \xymatrix{
  1 \ar@(ul,dl)[]_{a} \ar[r]|{y_{1}} & 2 \ar@/_/[l]_{x_{1}} \ar@/^/[l]^{x_{2}}}
\]
\[ 
Z_{4} = \xymatrix{
  1 \ar@(ul,dl)[]_{a} \ar@/^0.25pc/[r]|{y_{1}} \ar@/_0.6pc/[r]_{y_{2}} &
  2 \ar@/_0.6pc/[l]_{x_{1}} \ar@/^0.25pc/[l]|{x_{2}}}
\qquad
Z_{5} = \xymatrix{
  1 \ar@(ul,dl)[]_{a} \ar@/^0.6pc/[r]|{y_{1}} \ar@/_0.6pc/[r]|{y_{2}} &
  2 \ar@/_1pc/[l]_{x_{1}} \ar[l]|{x_{2}} \ar@/^1pc/[l]^{x_{3}}}
\qquad
\dots
\]
In the main body of the paper we view the quiver \(\Q_{r}\) as the double of
\(Z_{r}\). Other possible choices are briefly discussed in Appendix
\ref{other-nc}.

\subsection{Aim and organization of the paper}

The aim of this paper is to study the non-commutative symplectic geometry of
the family of quivers \((\Q_{r})_{r\geq 1}\), with particular regard to its
group of symplectomorphisms, and to extend some of the results obtained in
\cite[Part 2]{bp08} and \cite{mt13} for the case \(r=2\) to higher values of
\(r\), hopefully clarifying their origin in the process.

We start in Section \ref{s:rem} by recalling some basic constructions in
non-commutative symplectic geometry that will be used in the sequel.

In Section \ref{s:gen} we study the properties of the path algebra of the
quiver \(\Q_{r}\) that do not depend on the particular non-commutative
symplectic structure chosen.

In Section \ref{s:nc} we consider \(\Q_{r}\) as the double of the zigzag
quiver \(Z_{r}\) and characterize the corresponding group of (tame)
symplectomorphisms \(\TAut(\bb{C}\Q_{r};c_{r})\).

In Section \ref{s:defP} we define a subgroup \(\mathcal{P}_{r}\subset
\TAut(\bb{C}\Q_{r};c_{r})\) which is the higher-rank version of the group
\(\mathcal{P} = \mathcal{P}_{2}\) considered in \cite{mt13}. As a
generalization (and strengthening) of one of the results proved there, we show
that this group contains two copies of the classical group \(\GL_{r}\) over
the complex polynomial ring in one indeterminate.

In Section \ref{s:action} we show that one can use the action of
\(\mathcal{P}_{r}\) on the manifolds \(\mathcal{C}_{n,r}\) to connect each
pair of points in the open subset
\begin{equation}
  \label{eq:2}
  \mathcal{R}_{n,r}\deq \mathcal{C}_{n,r}' \cup \mathcal{C}_{n,r}''
\end{equation}
where \(\mathcal{C}_{n,r}''\) is defined, by analogy with
\(\mathcal{C}_{n,r}'\), as the subset of \(\mathcal{C}_{n,r}\) consisting of
the orbits of those quadruples in which the matrix \(Y\) has \(n\) distinct
eigenvalues.

Finally, in Section \ref{s:ex} we specialize our machinery to the cases
\(r\leq 3\), making contact with the previously known results for \(r=1,2\).

\section{Non-commutative symplectic geometry}
\label{s:rem}

Let us quickly summarize some definitions and results in non-commutative
symplectic geometry that will be used in the rest of the paper. The interested
reader should look at Ginzburg's lectures \cite{ginz05} for a broad treatment
of the subject.

Let \(A\) denote an associative, not necessarily commutative, algebra over the
commutative ring \(B\). Given an \(A\)-bimodule \(M\), a \emph{derivation}
from \(A\) to \(M\) is a \(A\)-bimodule map \(\app{\theta}{A}{M}\) such that
\[ \theta(ab) = \theta(a)b + a\theta(b) \quad\text{ for all } a,b\in A. \]
We say that \(\theta\) is a derivation \emph{relative to} \(B\) if
\(\theta(b)=0\) for every \(b\in B\), and write \(\Der_{B}(A,M)\) for the
linear space of such derivations. (We will usually write \(\Der_{B}(A)\)
instead of \(\Der_{B}(A,A)\).) The functor
\[ \app{\Der_{B}(A,-)}{\bimod{A}}{\mathbf{Vec}} \]
is representable; we denote its representing object by \(\Omega^{1}_{B}(A)\)
and call it the \(A\)-bimodule of \emph{K\"ahler differentials} of \(A\)
relative to \(B\). It comes naturally equipped with a derivation
\(\app{\de}{A}{\Omega^{1}_{B}(A)}\) relative to \(B\). The tensor algebra of
this \(A\)-bimodule,
\[ \Omega^{\bullet}_{B}(A)\deq \bm{T}_{A}(\Omega^{1}_{B}(A)), \]
turns out to be isomorphic to the \emph{universal differential envelope} of
\(A\) relative to \(B\) (see \cite{cq95}, \cite[Theorem 10.7.1]{ginz05}). As
such, it is naturally a differential graded algebra, with a differential
\(\app{\de}{\Omega^{\bullet}_{B}(A)}{\Omega^{\bullet+1}_{B}(A)}\) extending
the derivation \(\app{\de}{A}{\Omega^{1}_{B}(A)}\) described above. 

More concretely, if we denote by \(\ol{A}\) the quotient of \(A\) by its
subalgebra \(B\) we have the isomorphism
\[ \Omega^{n}_{B}(A)\isom A\otimes_{B} \underbrace{\ol{A} \otimes_{B} \dots
  \otimes_{B} \ol{A}}_{n\text{ times}}. \]
Multiplication is given by the rule
\[ (a_{0}\otimes \dots \otimes a_{n})(a_{n+1}\otimes  \dots\otimes a_{m}) =
\sum_{i=0}^{n} (-1)^{n-i} a_{0}\otimes \dots\otimes a_{i-1}\otimes
a_{i}a_{i+1}\otimes a_{i+2}\otimes \dots\otimes a_{m} \]
and the differential acts as
\[ \de(a_{0}\otimes \dots\otimes a_{n}) = 1\otimes a_{0}\otimes \dots\otimes
a_{n}. \]
In the sequel we are going to denote a generic element \(a_{0}\otimes
a_{1}\otimes \dots\otimes a_{n}\in \Omega^{n}_{B}(A)\) more concisely as
\(a_{0}\de a_{1}\dots \de a_{n}\).

The \emph{non-commutative Karoubi-de Rham complex of} \(A\) relative to \(B\),
denoted \(\DR^{\bullet}_{B}(A)\), is the graded vector space over \(\bb{C}\)
whose degree \(n\) part is defined by
\[ \DR^{n}_{B}(A)\deq \frac{\Omega^{n}_{B}(A)}{\sum_{i=0}^{n}
  \comm{\Omega^{i}_{B}(A)}{\Omega^{n-i}_{B}(A)}}, \]
where \(\comm{\Omega^{i}_{B}(A)}{\Omega^{n-i}_{B}(A)}\) denotes the linear
subspace in \(\Omega^{n}_{B}(A)\) generated by all the graded commutators
\(\comm{a}{b}\) for \(a\in \Omega^{i}_{B}(A)\), \(b\in \Omega^{n-i}_{B}(A)\).
Elements of the linear space \(\DR^{n}_{B}(A)\) will be called the
\emph{Karoubi-de Rham} \(n\)\emph{-forms} of \(A\) relative to \(B\). In
particular for \(n=0\) we have that
\[ \DR^{0}_{B}(A) = \frac{A}{\comm{A}{A}} \]
does not depend on \(B\), hence we will usually denote it simply as
\(\DR^{0}(A)\). Its elements are to be thought of as the ``regular functions''
on the non-commutative space determined by \(A\). We also define
\[ \ol{\DR}^{0}(A)\deq \DR^{0}(A)/B\isom \frac{A}{B + \comm{A}{A}}, \]
in agreement with previous notation.

The differential on \(\Omega^{\bullet}_{B}(A)\) descends to a well defined map
\(\app{\de}{\DR^{\bullet}_{B}(A)}{\DR^{\bullet+1}_{B}(A)}\), making
\(\DR^{\bullet}_{B}(A)\) a differential graded vector space. In fact, one can
introduce a whole ``Cartan calculus'' on \(\DR^{\bullet}_{B}(A)\), that is for
every derivation \(\theta\in \Der_{B}(A)\) we have a degree \(-1\) ``interior
product'' \(\app{i_{\theta}}{\DR^{\bullet}_{B}(A)}{\DR^{\bullet-1}_{B}(A)}\)
and a degree \(0\) ``Lie derivative'' \(\app{\mathcal{L}_{\theta}}
{\DR^{\bullet}_{B}(A)}{\DR^{\bullet}_{B}(A)}\) satisfying all the familiar
relationships from commutative differential geometry \cite[Section
11]{ginz05}.

In this framework, a \emph{non-commutative symplectic manifold} is defined to
be a pair \((A,\omega)\) consisting of an associative algebra \(A\) and a
Karoubi-de Rham 2-form \(\omega\in \DR^{2}_{B}(A)\) which is closed
(\(\de\omega = 0\) in \(\DR^{3}_{B}(A)\)) and non-degenerate, meaning that the
map \(\app{\tilde{\omega}}{\Der_{B}(A)}{\DR^{1}_{B}(A)}\) defined by
\(\theta\mapsto i_{\theta}(\omega)\) is a bijection. In analogy with the
commutative case, we say that a derivation \(\theta\) is \emph{symplectic} if
\(\mathcal{L}_{\theta}(\omega) = 0\), and denote by \(\Der_{B,\omega}(A)\) the
Lie subalgebra of \(\Der_{B}(A)\) consisting of symplectic derivations. It
follows from the Cartan homotopy formula that a derivation \(\theta\) is
symplectic if and only if the 1-form \(\tilde{\omega}(\theta)\) is closed.

There is also no problem in defining a map \(\app{\theta}{\DR^{0}(A)}
{\Der_{B,\omega}(A)}\) sending each 0-form \(f\) to the corresponding
``Hamiltonian derivation'' \(\theta_{f}\deq \tilde{\omega}^{-1}(\de f)\). Just
as in the commutative case, this map fits into an exact sequence of Lie
algebras
\begin{equation}
  \label{eq:26}
  0\rightarrow H^{0}(\DR^{\bullet}_{B}(A))\rightarrow \DR^{0}(A)
  \stackrel{\theta}{\rightarrow} \Der_{B,\omega}(A)\rightarrow
  H^{1}(\DR^{\bullet}_{B}(A))\rightarrow 0
\end{equation}
where the Lie algebra structure on \(\DR^{0}(A)\) is given by the Poisson
brackets determined by \(\omega\), defined e.g. by \(\{f,g\}\deq
i_{\theta_{f}}i_{\theta_{g}}(\omega)\). See \cite[Section 14]{ginz05} for the
details.

Now let us specialize to the case when \(A = \bb{C}\ol{Q}\) is the path
algebra of the double of some quiver \(Q\). Let us denote again with \(I =
\{1, \dots, n\}\) the vertices of \(Q\), and by \(e_{i}\) the trivial (i.e.
length zero) path at the vertex \(i\). Then, as noted in \cite{ginz01,blb02},
\(\bb{C}\ol{Q}\) is most naturally viewed as an algebra over the (commutative)
ring
\[ \bb{C}^{n} = \bb{C}e_{1}\oplus \dots\oplus \bb{C}e_{n} \]
generated by the complete set of mutually orthogonal idempotents \(e_{1},
\dots, e_{n}\). Accordingly, in the sequel we will only consider derivations
and differential forms on \(\bb{C}\ol{Q}\) relative to this subalgebra,
without explicitly displaying this fact in the notation.

A basis for the vector space \(\DR^{0}(\bb{C}\ol{Q})\) is given by the
\emph{necklace words} in \(\bb{C}\ol{Q}\), that is oriented cycles in
\(\ol{Q}\) modulo cyclic permutations. A crucial result \cite{ginz01,blb02}
is that the relative Karoubi-de Rham complex associated to \(\bb{C}\ol{Q}\) is
acyclic:
\begin{equation}
  \label{eq:24}
  H^{k}(\DR^{\bullet}(\bb{C}\ol{Q})) =
  \begin{cases}
    \bb{C}^{n} &\text{ for } k=0\\
    0 &\text{ for } k\geq 1.
  \end{cases}
\end{equation}
The \emph{non-commutative symplectic form} of the quiver \(Q\) is the
Karoubi-de Rham 2-form on \(\bb{C}\ol{Q}\) represented by
\begin{equation}
  \label{eq:omega-nc}
  \omega\deq \sum_{\xi\in Q} \de \xi \de \xi^{*}
\end{equation}
where the sum runs over all the arrows in \(Q\). One can easily verify that
\(\omega\) is both closed and non-degenerate, so that the pair
\((\bb{C}\ol{Q}, \omega)\) is in fact a non-commutative symplectic manifold.

The Poisson brackets induced by \(\omega\) on \(\DR^{0}(\bb{C}\ol{Q})\) can be
given a very explicit expression, as follows. First, let us introduce for
every arrow \(\xi\in \ol{Q}\) a corresponding ``necklace derivative'' operator
\[ \app{\frac{\partial}{\partial \xi}}{\DR^{0}(\bb{C}\ol{Q})}{\bb{C}\ol{Q}} \]
defined on an arrow \(\eta\) of \(\ol{Q}\) as
\[ \frac{\partial \eta}{\partial \xi} =
\begin{cases}
  1 &\text{ if } \eta=\xi\\
  0 &\text{ otherwise}
\end{cases} \]
and extended on a generic necklace word \(w = \eta_{1}\dots \eta_{\ell}\in
\DR^{0}(\bb{C}\ol{Q})\) by
\[ \frac{\partial w}{\partial \xi} = \sum_{k=1}^{\ell} \frac{\partial
  \eta_{k}}{\partial \xi}\eta_{k+1}\dots \eta_{\ell}\eta_{1}\dots \eta_{k-1}. \]
Then the Lie brackets induced on \(\DR^{0}(\bb{C}\ol{Q})\) by the
non-commutative symplectic form \eqref{eq:omega-nc} are given by
\begin{equation}
  \label{eq:25}
  \{w_{1},w_{2}\} = \sum_{\xi\in Q} \left( \frac{\partial w_{1}}{\partial \xi}
    \frac{\partial w_{2}}{\partial \xi^{*}} - \frac{\partial w_{1}}{\partial
      \xi^{*}} \frac{\partial w_{2}}{\partial \xi}\right) \mod
  \comm{\bb{C}\ol{Q}}{\bb{C}\ol{Q}}.
\end{equation}
We also note that, as a consequence of \eqref{eq:24}, the sequence
\eqref{eq:26} becomes in this case simply
\[ 0\rightarrow \bb{C}^{n}\rightarrow \DR^{0}(\bb{C}\ol{Q})\rightarrow
\Der_{\omega}(\bb{C}\ol{Q})\rightarrow 0. \]
Finally, let us clarify what it means for an algebra automorphism of
\(\bb{C}\ol{Q}\) to be symplectic. As it is shown in \cite[\S 10.4]{ginz05},
for every associative algebra \(A\) there is a well-defined map
\[ \app{b}{\DR^{1}(A)}{\comm{A}{A}} \]
given by \(a_{0}\de a_{1}\mapsto \comm{a_{0}}{a_{1}}\). Suppose now that \(A =
\bb{C}\ol{Q}\) for some quiver \(Q\), and let \(\omega\) be any closed
Karoubi-de Rham 2-form on \(\bb{C}\ol{Q}\). By the acyclicity of
\(\DR^{\bullet}(\bb{C}\ol{Q})\), there exists \(\theta\in
\DR^{1}(\bb{C}\ol{Q})\) such that \(\omega = d\theta\); then we can define a
map\footnote{In the ``absolute'' case, that is when the quiver \(Q\) has only
  one vertex (so that \(\bb{C}\ol{Q}\) is a free algebra over \(\bb{C}\)),
  this map is in fact an isomorphism \cite[Proposition 11.5.4(ii)]{ginz05}. We
  do not know whether this result holds also in the ``relative'' case, i.e.
  for a quiver with more than one vertex.}
\begin{equation}
  \label{eq:27}
  \DR^{2}(\bb{C}\ol{Q})_{\text{closed}}\to \comm{\bb{C}\ol{Q}}{\bb{C}\ol{Q}}
\end{equation}
by sending \(\omega\) to \(b(\theta)\in \comm{\bb{C}\ol{Q}}{\bb{C}\ol{Q}}\).
In particular, the non-commutative symplectic form \eqref{eq:omega-nc} is the
differential of the ``non-commutative Liouville 1-form''
\[ \theta = \sum_{\xi\in Q} \xi \de \xi^{*}. \]
The image of this 1-form under the map \eqref{eq:27},
\[ c\deq \sum_{\xi\in Q} \comm{\xi}{\xi^{*}}, \]
will be called, following \cite{blb02}, the \emph{moment element} of the path
algebra \(\bb{C}\ol{Q}\). Then we say that an algebra automorphism
\(\app{\psi}{\bb{C}\ol{Q}}{\bb{C}\ol{Q}}\) is \emph{symplectic} if it
preserves the moment element: \(\psi(c) = c\).

\section{The path algebra of $\Q_{r}$}
\label{s:gen}

The goal of this section is to study the structure of the path algebra of the
quiver \eqref{eq:Qbar} and its automorphism group.

We start by introducing some notation. As in section \ref{s:rem}, we denote by
\(e_{1}\) and \(e_{2}\) the trivial paths at the two vertices \(1\) and \(2\)
in \(\Q_{r}\), and consider \(\bb{C}\Q_{r}\) as an associative algebra over
the ring \(\bb{C}^{2} = \bb{C}e_{1}\oplus \bb{C}e_{2}\) with unit
\(e_{1}+e_{2}\). Let us also define
\begin{equation}
  \label{eq:4}
  \mathcal{A}_{ij}\deq e_{i}\bb{C}\Q_{r}e_{j} \qquad (i,j=1,2)
\end{equation}
as the linear subspace in \(\bb{C}\Q_{r}\) spanned by the paths \(j\to i\).
For the sake of brevity, we put \(\mathcal{A}_{i}\deq \mathcal{A}_{ii}\).
Clearly, as a linear space \(\bb{C}\Q_{r}\) can be decomposed as the direct
sum
\begin{equation}
  \label{eq:1}
  \bb{C}\Q_{r} = \mathcal{A}_{1}\oplus \mathcal{A}_{12}\oplus
  \mathcal{A}_{21}\oplus \mathcal{A}_{2}.
\end{equation}
Let us denote by \(a\) and \(a^{*}\) the two loops at \(1\), by \(b_{1},
\dots, b_{r}\) the \(r\) arrows \(1\rightarrow 2\) and by \(d_{1}, \dots,
d_{r}\) the \(r\) arrows \(1\leftarrow 2\) in \(\Q_{r}\). The letters \(b\)
and \(d\) are meant to resemble, respectively, something pointing to the right
(for arrows \(1\to 2\)) and something pointing to the left (for arrows
\(1\leftarrow 2\)). It is convenient to introduce the matrices
\[ D\deq \begin{pmatrix}
  d_{1}\\
  \vdots\\
  d_{r}
\end{pmatrix},
\quad
B\deq 
\begin{pmatrix}
  b_{1} & \hdots & b_{n}
\end{pmatrix}
\quad\text{ and }\quad
E\deq D\cdot B. \]
We adopt the convention that Greek indices (\(\alpha\), \(\beta\), etc.)
always run from \(1\) to \(r\). Notice that each entry \(e_{\alpha\beta} =
d_{\alpha}b_{\beta}\) of the matrix \(E\) is an element of
\(\mathcal{A}_{1}\), i.e. a cycle at the vertex \(1\) in \(\Q_{r}\).

We can now make a number of trivial observations.
\begin{lem}
  The subspace \(\mathcal{A}_{1}\) is closed under products and is isomorphic
  to the free associative algebra on the \(r^{2}+2\) generators \(a\),
  \(a^{*}\) and \((e_{\alpha\beta})_{\alpha,\beta=1\dots r}\), with unit \(e_{1}\).
\end{lem}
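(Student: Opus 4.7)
The plan is in two steps: first check that $\mathcal{A}_1$ really is a subalgebra with unit $e_1$, then build an explicit bijection between paths $1\to 1$ in $\Q_r$ and words in the proposed generators.

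For the subalgebra claim, observe that $e_1$ is an idempotent in $\bb{C}\Q_r$ orthogonal to $e_2$. Hence for $x,y\in \mathcal{A}_1 = e_1\bb{C}\Q_r e_1$,
\[ xy = (e_1 x e_1)(e_1 y e_1) = e_1\,(xe_1 y)\,e_1 \in \mathcal{A}_1, \]
and $e_1$ acts as a two-sided identity on $\mathcal{A}_1$. This is immediate and requires no combinatorial analysis.

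The free-algebra claim rests on a structural observation about $\Q_r$: the vertex $2$ carries no loops, and the only arrows incident to it are the $b_\alpha\colon 1\to 2$ and the $d_\beta\colon 2\to 1$. I would use this to show that every oriented path $1\to 1$ in $\Q_r$ admits a \emph{unique} parsing as a concatenation of the atomic blocks
\[ a, \qquad a^{*}, \qquad b_\beta d_\alpha \quad (\alpha,\beta=1,\dots,r), \]
by scanning the path from one endpoint and, every time the path leaves $1$ (necessarily via some $b_\beta$), noting that it must return immediately through some $d_\alpha$ since there is nothing to do at $2$. Under the composition convention of the path algebra (so that $d_\alpha b_\beta = e_{\alpha\beta}$ is the length-two path going $1\to 2\to 1$), the atomic blocks correspond in the algebra to $a$, $a^*$, and $e_{\alpha\beta}$.

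It follows that the paths $1\to 1$, which form a $\bb{C}$-basis of $\mathcal{A}_1$, are in bijection with words in the alphabet $\{a, a^{*}\}\cup \{e_{\alpha\beta}\mid \alpha,\beta=1,\dots,r\}$; these words in turn form a basis of the free algebra $\bb{C}\langle a, a^{*}, (e_{\alpha\beta})\rangle$. The canonical algebra map from the free algebra into $\mathcal{A}_1$ sending each letter to the element of the same name then sends basis to basis, and is therefore an isomorphism.

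The only subtle point is getting the direction of composition right so that $e_{\alpha\beta} = d_\alpha b_\beta$ indeed corresponds to the excursion $1\to 2\to 1$ and so that the unique parsing above translates to a product in the correct order in $\bb{C}\Q_r$; once the convention of Section~\ref{s:rem} is applied consistently, the rest is routine.
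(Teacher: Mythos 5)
Your argument is correct: the unique parsing of any cycle at vertex $1$ into the blocks $a$, $a^{*}$ and the excursions $e_{\alpha\beta}=d_{\alpha}b_{\beta}$ (forced because vertex $2$ has no loops and must be exited immediately via some $d_{\alpha}$) is exactly the content the paper has in mind when it dismisses this as following ``immediately from the definition of the path algebra.'' You have simply written out the details the paper omits, including the correct handling of the composition convention, so there is nothing to object to.
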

\begin{proof}
  This follows immediately from the definition of the path algebra.
\end{proof}
Of course it is also true that \(\mathcal{A}_{2}\) is a free subalgebra on the
\(r^{2}\) generators \((b_{\alpha}d_{\beta})_{\alpha,\beta=1\dots r}\).
\begin{lem}
  \label{lem:A12-free}
  The subspace \(\mathcal{A}_{12}\) is a free left \(\mathcal{A}_{1}\)-module
  with basis \((d_{1}, \dots, d_{r})\).
\end{lem}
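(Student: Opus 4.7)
The plan is to exploit the combinatorial structure of paths in $\Q_r$. By inspection of the quiver, the only arrows whose source is vertex $2$ are $d_1,\dots,d_r$ (there are no loops at $2$). Hence any path $\pi$ from $2$ to $1$, read from its source, must begin with exactly one of the $d_\alpha$. More formally, I would argue that every nonzero path in $\mathcal{A}_{12}$ admits a unique factorization $\pi = q\cdot d_\alpha$ with $\alpha \in \{1,\dots,r\}$ and $q$ a path from $1$ to $1$, i.e. $q \in \mathcal{A}_1$.

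Having established this unique factorization at the level of the path basis, I would translate it into a direct sum decomposition
\[ \mathcal{A}_{12} \;=\; \bigoplus_{\alpha=1}^{r} \mathcal{A}_1 \cdot d_\alpha. \]
The spanning part of this statement shows that $(d_1,\dots,d_r)$ generates $\mathcal{A}_{12}$ as a left $\mathcal{A}_1$-module. For linear independence over $\mathcal{A}_1$, suppose $\sum_\alpha p_\alpha d_\alpha = 0$ with $p_\alpha \in \mathcal{A}_1$; the directness of the decomposition forces each $p_\alpha d_\alpha = 0$ separately, and right-multiplication by $d_\alpha$ is injective on $\mathcal{A}_1$ because concatenating a cycle at $1$ with the arrow $d_\alpha$ produces a distinct path for each distinct cycle. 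Hence $p_\alpha = 0$ for every $\alpha$.

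There is no real obstacle here: the result is essentially combinatorial, resting only on the facts that paths form a vector-space basis of $\bb{C}\Q_r$ and that concatenation of paths is injective wherever it is defined. The one bookkeeping point worth stating explicitly is the composition convention ($\pi = q\cdot d_\alpha$ means ``traverse $d_\alpha$ first, then the cycle $q$''), so that the left $\mathcal{A}_1$-action on $\mathcal{A}_{12}$ is well-defined and compatible with the factorization above. Once this is fixed the proof reduces to reading off the unique first arrow of each path, and the analogous statement for $\mathcal{A}_{21}$ as a right $\mathcal{A}_1$-module with basis $(b_1,\dots,b_r)$, should one need it, follows by a symmetric argument.
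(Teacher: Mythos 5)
Your proof is correct, and it takes a slightly different route from the paper's. The paper proves only the linear-independence half (treating the spanning statement as evident) and does so by a quick algebraic trick: given \(\rho_{1}d_{1}+\dots+\rho_{r}d_{r}=0\) with \(\rho_{\alpha}\in\mathcal{A}_{1}\), it multiplies on the right by \(b_{1}\) to obtain \(\sum_{\alpha}\rho_{\alpha}e_{\alpha 1}=0\) inside \(\mathcal{A}_{1}\), and then invokes the preceding lemma that \(\mathcal{A}_{1}\) is free on \(a\), \(a^{*}\) and the \(e_{\alpha\beta}\) to conclude \(\rho_{\alpha}=0\). You instead argue directly from the path basis of \(\bb{C}\Q_{r}\): since the \(d_{\alpha}\) are the only arrows with source \(2\), every path \(2\to 1\) factors uniquely as \(q\cdot d_{\alpha}\) with \(q\in\mathcal{A}_{1}\), giving the direct sum \(\mathcal{A}_{12}=\bigoplus_{\alpha}\mathcal{A}_{1}d_{\alpha}\) from which both generation and independence follow. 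Your version is marginally more self-contained (it does not route through the freeness of \(\mathcal{A}_{1}\), though both arguments ultimately rest on the same combinatorial fact that distinct paths are linearly independent and concatenation is injective where defined) and it makes the spanning part explicit; the paper's version is shorter. Your remark fixing the composition convention \(\pi=q\cdot d_{\alpha}\) (traverse \(d_{\alpha}\) first) is consistent with the paper's conventions, e.g. \(e_{\alpha\beta}=d_{\alpha}b_{\beta}\) being a cycle at vertex \(1\).
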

\begin{proof}
  Suppose we are given a relation
  \[ \rho_{1} d_{1} + \dots + \rho_{r} d_{r} = 0 \]
  for some coefficients \((\rho_{\alpha})_{\alpha=1\dots r}\) in
  \(\mathcal{A}_{1}\). By multiplying from the right e.g. by \(b_{1}\), we get
  the equality
  \[ \rho_{1} e_{11} + \dots + \rho_{r} e_{r1} = 0. \]
  As \(\mathcal{A}_{1}\) is a free algebra, this implies \(\rho_{\alpha} = 0\)
  for every \(\alpha=1\dots r\).
\end{proof}
\begin{lem}
  \label{lem:A21-free}
  The subspace \(\mathcal{A}_{21}\) is a free right \(\mathcal{A}_{1}\)-module
  with basis \((b_{1}, \dots, b_{r})\).
\end{lem}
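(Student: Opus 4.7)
The plan is to mirror the proof of Lemma \ref{lem:A12-free} with the roles of left and right multiplication swapped; no new idea is required.

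First I would verify the generating statement. Any element of \(\mathcal{A}_{21}\) is a \(\bb{C}\)-linear combination of paths from vertex \(1\) to vertex \(2\) in \(\Q_{r}\); the last arrow traversed by such a path (the one whose head is vertex \(2\)) must be one of \(b_{1}, \dots, b_{r}\), since these are the only arrows of \(\Q_{r}\) ending at \(2\). Hence each such path factors uniquely as \(b_{\alpha}q\) where \(q\) is a path from \(1\) to \(1\), i.e.\ \(q\in \mathcal{A}_{1}\). This shows that \((b_{1}, \dots, b_{r})\) spans \(\mathcal{A}_{21}\) as a right \(\mathcal{A}_{1}\)-module.

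For linear independence, suppose that
\[ b_{1}\rho_{1} + \dots + b_{r}\rho_{r} = 0 \]
for some \(\rho_{1}, \dots, \rho_{r}\in \mathcal{A}_{1}\). Left-multiplying by \(d_{1}\) (which lives in \(\mathcal{A}_{12}\)) yields the equation
\[ e_{11}\rho_{1} + e_{12}\rho_{2} + \dots + e_{1r}\rho_{r} = 0 \]
inside \(\mathcal{A}_{1}\). Now \(e_{11}, \dots, e_{1r}\) are \(r\) distinct elements of the free generating set \(\{a, a^{*}\}\cup \{e_{\alpha\beta}\}\) of \(\mathcal{A}_{1}\); expanding each \(\rho_{\alpha}\) in the basis of words in these generators, the summand \(e_{1\alpha}\rho_{\alpha}\) contributes only words whose initial letter is \(e_{1\alpha}\), and distinct initial letters yield disjoint subsets of the word basis. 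Therefore each \(e_{1\alpha}\rho_{\alpha}\) must vanish individually, and by freeness of \(\mathcal{A}_{1}\) we conclude \(\rho_{\alpha}=0\) for all \(\alpha\).

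There is no real obstacle: the statement is the left/right-dual of the previous lemma, and I expect the paper either to spell out the same two-line argument or simply refer back to Lemma \ref{lem:A12-free}. The only point worth noting is that in the previous lemma the trick was to right-multiply by \(b_{1}\) in order to land on the generators \(e_{\alpha 1}\) of \(\mathcal{A}_{1}\); here one must left-multiply by \(d_{1}\) to land on the generators \(e_{1\alpha}\) instead, after which the argument proceeds identically.
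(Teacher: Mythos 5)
Your proof is correct and is exactly the argument the paper intends: the paper's proof of this lemma consists of the single line ``Completely analogous to the previous one,'' and your left-multiplication by \(d_{1}\) to land on the generators \(e_{1\alpha}\) is precisely the left/right dual of the paper's right-multiplication by \(b_{1}\) in Lemma \ref{lem:A12-free}. The additional verification that \((b_{1},\dots,b_{r})\) generates is a harmless (and slightly more complete) bonus.
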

\begin{proof}
  Completely analogous to the previous one.
\end{proof}
Let us also note that, as vector spaces,
\[ \DR^{0}(\bb{C}\Q_{r})\isom \DR^{0}(\mathcal{A}_{1}) \oplus \bb{C}e_{2} \]
because in \(\Q_{r}\) every cycle of nonzero length based at \(2\) can be
turned into a cycle based at \(1\) by a single cyclic shift. In other words,
apart from \(e_{2}\) every ``regular function'' on the non-commutative
manifold \(\bb{C}\Q_{r}\) comes from a ``regular function'' on the
\((r^{2}+2)\)-dimensional non-commutative affine space \(\mathcal{A}_{1}\).

We proceed to study the group of \(\bb{C}^{2}\)-linear automorphisms of
\(\bb{C}\Q_{r}\), denoted simply by \(\Aut \bb{C}\Q_{r}\). Its elements are
morphisms of algebras \(\bb{C}\Q_{r}\to \bb{C}\Q_{r}\) which are invertible
and fix both \(e_{1}\) and \(e_{2}\). It follows that each \(\psi\in \Aut
\bb{C}\Q_{r}\) preserves the decomposition \eqref{eq:1}, since for every path
\(p\in \bb{C}\Q_{r}\)
\[ \psi(e_{i}pe_{j}) = e_{i}\psi(p)e_{j} \]
so that \(p\in \mathcal{A}_{ij}\) implies \(\psi(p)\in \mathcal{A}_{ij}\).
This means in particular that every \(\psi\in \Aut \bb{C}\Q_{r}\)
automatically induces, by restriction, three bijections
\begin{equation}
  \label{eq:def-res}
  \app{\psi_{1}}{\mathcal{A}_{1}}{\mathcal{A}_{1}}, \quad
  \app{\psi_{12}}{\mathcal{A}_{12}}{\mathcal{A}_{12}} \quad\text{ and }\quad
  \app{\psi_{21}}{\mathcal{A}_{21}}{\mathcal{A}_{21}}.
\end{equation}
\begin{lem}
  The map
  \begin{equation}
    \label{eq:res1}
    \app{\res_{1}}{\Aut \bb{C}\Q_{r}}{\Aut \mathcal{A}_{1}}
  \end{equation}
  defined by \(\psi\mapsto \psi_{1}\) is a morphism of groups.
\end{lem}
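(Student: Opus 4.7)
The statement is essentially a functoriality assertion, so the proof is mostly a bookkeeping exercise; the substantive content has already been extracted in the paragraph preceding the lemma, namely the observation that every $\psi \in \Aut \bb{C}\Q_{r}$ preserves the decomposition \eqref{eq:1}. My plan is to split the verification into three small pieces.

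First I would check that $\psi_{1}$ is not merely a bijection but in fact an algebra automorphism of $\mathcal{A}_{1}$. The subspace $\mathcal{A}_{1} = e_{1}\bb{C}\Q_{r}e_{1}$ is a subalgebra (with unit $e_{1}$) by the first lemma of this section, and since $\psi$ is an algebra homomorphism of $\bb{C}\Q_{r}$ fixing $e_{1}$, its restriction to any subalgebra containing $e_{1}$ is automatically an algebra homomorphism. The already-noted fact that $\psi(\mathcal{A}_{1}) \subseteq \mathcal{A}_{1}$ and $\psi^{-1}(\mathcal{A}_{1}) \subseteq \mathcal{A}_{1}$ then gives that $\psi_{1}$ is bijective, with inverse $(\psi^{-1})_{1}$. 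Hence $\res_{1}$ really does land in $\Aut \mathcal{A}_{1}$.

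Next I would check the multiplicativity $\res_{1}(\psi \circ \varphi) = \res_{1}(\psi) \circ \res_{1}(\varphi)$. For any $x \in \mathcal{A}_{1}$ one has $\varphi(x) \in \mathcal{A}_{1}$ by the invariance of $\mathcal{A}_{1}$, so
\[
(\psi \circ \varphi)_{1}(x) = \psi(\varphi(x)) = \psi_{1}(\varphi_{1}(x)) = (\psi_{1} \circ \varphi_{1})(x),
\]
and the identity automorphism trivially restricts to the identity on $\mathcal{A}_{1}$.

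There is no real obstacle: the only thing that could fail would be $\mathcal{A}_{1}$ not being $\psi$-stable, and this has already been established from the general identity $\psi(e_{i}pe_{j}) = e_{i}\psi(p)e_{j}$. Thus the whole argument is a short verification built on top of the invariance of the idempotent decomposition and the subalgebra structure of $\mathcal{A}_{1}$.
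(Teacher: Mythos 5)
Your proof is correct and follows the only natural route; the paper itself simply says the lemma is immediate from the definitions, and your write-up is a careful expansion of exactly that verification (stability of $\mathcal{A}_{1}$ under $\psi$ and $\psi^{-1}$, plus multiplicativity of restriction).
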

\begin{proof}
  Immediate from the definitions.
\end{proof}
To clarify the nature of the other two restriction maps we need the following
concept from group theory. Suppose we are given two groups \(G\) and \(H\) and
an action \(\app{\varphi}{G}{\Aut H}\) of \(G\) on \(H\). A map
\(\app{f}{G}{H}\) is then called a \emph{crossed (homo)morphism of groups} if
\[ f(ab) = f(a) \varphi_{a}(f(b)) \quad\text{ for all } a,b\in G. \]
When the action \(\varphi\) is trivial, a crossed morphism is just an ordinary
morphism of groups.

In the present situation, let us take \(G = \Aut \bb{C}\Q_{r}\) and \(H =
\GL_{r}(\mathcal{A}_{1})\), the group of invertible \(r\times r\) matrices
with entries in the free algebra \(\mathcal{A}_{1}\). The morphism
\eqref{eq:res1} induces an action of \(G\) on \(H\) obtained by letting
\(\psi_{1} = \res_{1}(\psi)\) act on each matrix element:
\begin{equation}
  \label{eq:act-Aut-GL}
  \psi(M)\deq (\psi_{1}(M_{\alpha\beta}))_{\alpha,\beta=1\dots r} \quad\text{
    for all } M\in \GL_{r}(\mathcal{A}_{1}).
\end{equation}
We claim that the two restriction maps defined by \(\psi\mapsto \psi_{12}\)
and \(\psi\mapsto \psi_{21}\) induce two crossed morphisms \(G\to H\) with
respect to the above action.

To see this, notice first that by lemma \ref{lem:A12-free} the map
\(\psi_{12}\) is completely specified by its values on the basis \((d_{1},
\dots, d_{r})\), and these values in turn can be expressed on this basis in a
unique way; in other words, there exists a unique \(r\times r\) matrix
\(M^{\psi}\) with entries in \(\mathcal{A}_{1}\) such that
\[ \psi_{12}(d_{\alpha}) = \sum_{\beta=1}^{r} M^{\psi}_{\alpha\beta} d_{\beta}. \]
Similarly, by lemma \ref{lem:A21-free}, the map \(\psi_{21}\) is also
completely specified by a unique \(r\times r\) matrix \(N^{\psi}\) with
entries in \(\mathcal{A}_{1}\) such that
\[ \psi_{21}(b_{\alpha}) = \sum_{\beta=1}^{r} b_{\beta} N^{\psi}_{\beta\alpha}. \]
\begin{teo}
  \label{teo:cr-mor}
  The maps
  \begin{equation}
    \label{eq:cr-mor}
    \app{M}{\Aut \bb{C}\Q_{r}}{\GL_{r}(\mathcal{A}_{1})^{\op}} \quad\text{ and
    }\quad \app{N}{\Aut \bb{C}\Q_{r}}{\GL_{r}(\mathcal{A}_{1})}
  \end{equation}
  defined, respectively, by \(\psi\mapsto M^{\psi}\) and \(\psi\mapsto
  N^{\psi}\) are crossed morphism of groups with respect to the action
  \eqref{eq:act-Aut-GL}.
\end{teo}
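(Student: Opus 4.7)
The plan is to verify the crossed morphism axiom $f(\psi\varphi) = f(\psi)\cdot\psi(f(\varphi))$ directly, by computing both sides of the defining equation for $M$ and $N$ evaluated on the canonical bases $(d_1,\dots,d_r)$ and $(b_1,\dots,b_r)$, and then using the freeness statements in Lemmas \ref{lem:A12-free} and \ref{lem:A21-free} to equate coefficients. A secondary, but essential, step will be to verify that the matrices $M^\psi$ and $N^\psi$ are actually invertible, so that the codomains of \eqref{eq:cr-mor} are correct.

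For $M$, I would pick $\psi,\varphi\in\Aut\bb{C}\Q_r$ and compute
\[ (\psi\circ\varphi)_{12}(d_\alpha) = \psi\!\left(\sum_{\beta} M^{\varphi}_{\alpha\beta}\,d_{\beta}\right) = \sum_{\beta} \psi_{1}(M^{\varphi}_{\alpha\beta})\,\psi_{12}(d_{\beta}) = \sum_{\beta,\gamma} \psi_{1}(M^{\varphi}_{\alpha\beta})\,M^{\psi}_{\beta\gamma}\,d_{\gamma}, \]
using that $\psi$ is an algebra homomorphism that preserves the decomposition \eqref{eq:1} (so factors in $\mathcal{A}_1$ get sent through $\psi_1$, and factors in $\mathcal{A}_{12}$ through $\psi_{12}$). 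By Lemma \ref{lem:A12-free}, equating the coefficients of $d_\gamma$ yields the matrix identity $M^{\psi\varphi} = \psi(M^{\varphi})\cdot M^{\psi}$ in $\GL_r(\mathcal{A}_{1})$. Reading this product in the opposite algebra gives $M^{\psi\varphi} = M^{\psi}\cdot_{\op}\psi(M^{\varphi})$, which is precisely the crossed morphism condition for the map $M$ into $\GL_r(\mathcal{A}_1)^{\op}$. The argument for $N$ is symmetric: applying $\psi\circ\varphi$ to $b_\alpha$, using that $N^\varphi$ appears on the \emph{right} of the basis vectors $b_\beta$, and invoking Lemma \ref{lem:A21-free} to equate coefficients, produces $N^{\psi\varphi} = N^{\psi}\cdot \psi(N^{\varphi})$ in $\GL_r(\mathcal{A}_1)$, i.e.\ the crossed morphism condition without any $\op$ flip.

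The step that requires a little care, and which I would flag as the main obstacle, is that so far the computations only show $M^\psi, N^\psi\in \Mat_r(\mathcal{A}_1)$; one still needs invertibility in $\GL_r(\mathcal{A}_1)$. This however falls out of the same identity applied to $\varphi=\psi^{-1}$: since $M^{\id} = I_r$ (directly from the definition, as $\id_{12}(d_\alpha)=d_\alpha$), the relation obtained above specializes to $I_r = M^{\psi}\cdot_{\op}\psi(M^{\psi^{-1}})$, which exhibits $\psi(M^{\psi^{-1}})$ as an inverse of $M^\psi$ in $\GL_r(\mathcal{A}_1)^{\op}$; the analogous computation works for $N^\psi$. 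This closes the proof, because well-definedness of the two maps into the asserted codomains and the crossed morphism identity have both been established.
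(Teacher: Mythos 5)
Your main computation is correct and is essentially the paper's own argument: compute $(\psi\circ\varphi)(d_\alpha)$ and $(\psi\circ\varphi)(b_\alpha)$, use that $\psi$ respects the decomposition \eqref{eq:1}, and equate coefficients via Lemmas \ref{lem:A12-free} and \ref{lem:A21-free} to get $M^{\psi\circ\varphi}=\psi(M^{\varphi})\,M^{\psi}$ and $N^{\psi\circ\varphi}=N^{\psi}\,\psi(N^{\varphi})$, which are the crossed-morphism identities (with the $\op$ flip for $M$).

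There is, however, a gap in the invertibility step, which you correctly identified as the delicate point but then did not fully close. Specializing your identity to $\varphi=\psi^{-1}$ gives only $I_r=\psi(M^{\psi^{-1}})\,M^{\psi}$, i.e.\ it exhibits $\psi(M^{\psi^{-1}})$ as a \emph{left} inverse of $M^{\psi}$ in $\Mat_r(\mathcal{A}_1)$. Since $\mathcal{A}_1$ is a noncommutative ring, a one-sided inverse of a square matrix does not formally imply a two-sided one, so the conclusion ``$M^\psi\in\GL_r(\mathcal{A}_1)$'' is not yet justified. (One could invoke the fact that free algebras are weakly finite, so that $AB=I$ forces $BA=I$ for square matrices, but that is a nontrivial theorem and you do not appeal to it.) The paper closes this gap elementarily: it uses \emph{both} relations coming from $\psi\circ\psi^{-1}=\id$ and $\psi^{-1}\circ\psi=\id$, namely $\psi(M^{\psi^{-1}})M^{\psi}=I_r$ and $\psi^{-1}(M^{\psi})M^{\psi^{-1}}=I_r$, and then applies $\psi$ to the second (using $\psi(I_r)=I_r$) to obtain $M^{\psi}\,\psi(M^{\psi^{-1}})=I_r$, so that $\psi(M^{\psi^{-1}})$ is a two-sided inverse. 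Adding this one extra line makes your argument complete; the same remark applies to $N^{\psi}$.
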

\begin{proof}
  Take \(\psi,\sigma \in \Aut \bb{C}\Q_{r}\). The action of \(\psi\) followed
  by \(\sigma\) on \(d_{\alpha}\) is
  \[ \sigma(\psi(d_{\alpha})) = \sigma(\sum_{\beta=1}^{r}
  M^{\psi}_{\alpha\beta} d_{\beta}) = \sum_{\beta=1}^{r}
  \sigma(M^{\psi}_{\alpha\beta}) \sigma_{12}(d_{\beta}) = \sum_{\beta=1}^{r}
  \sum_{\gamma=1}^{r} \sigma(M^{\psi})_{\alpha\beta} M^{\sigma}_{\beta\gamma}
  d_{\gamma} \]
  where \(\sigma(M^{\psi})\) is given exactly by \eqref{eq:act-Aut-GL}. In
  other words,
  \[ \sigma_{12}(\psi_{12}(d_{\alpha})) = \sum_{\gamma=1}^{r}
  (\sigma(M^{\psi}) M^{\sigma})_{\alpha\gamma} d_{\gamma} \]
  hence the matrix corresponding to \((\sigma\circ \psi)_{12}\) is
  \(\sigma(M^{\psi}) M^{\sigma}\). This would be the same as saying that the
  map defined by \(\psi\mapsto M^{\psi}\) is a crossed morphism from \(\Aut
  \bb{C}\Q_{r}\) to the opposite of \(\GL_{r}(\mathcal{A}_{1})\), except that
  we did not prove yet that \(M^{\psi}\) is always invertible. But applying
  the above result to the equalities \(\psi^{-1}\circ \psi = \psi\circ
  \psi^{-1} = \id_{\bb{C}\Q_{r}}\) we get that
  \[ \psi^{-1}(M^{\psi}) M^{\psi^{-1}} = I_{r} \quad\text{ and }\quad
  \psi(M^{\psi^{-1}}) M^{\psi} = I_{r}. \]
  From the second equality it follows that \(\psi(M^{\psi^{-1}})\) is a left
  inverse for \(M^{\psi}\), and acting with \(\psi\) on the first equality
  (notice that \(\psi(I_{r}) = I_{r}\) since \(\psi\) fixes \(e_{1}\)) we get
  that \(M^{\psi} \psi(M^{\psi^{-1}}) = I_{r}\), hence \(\psi(M^{\psi^{-1}})\)
  is also a right inverse for \(M^{\psi}\). This concludes the proof for the
  map \(M\).

  With regard to \(N\), we only need to perform the analogous computation for
  \(\sigma(\psi(b_{\alpha}))\) to find out that the matrix corresponding to
  \((\sigma\circ \psi)_{21}\) is given by \(N^{\sigma} \sigma(N^{\psi})\). By
  applying this result to the equalities \(\psi^{-1}\circ \psi = \psi\circ
  \psi^{-1} = \id_{\bb{C}\Q_{r}}\) we can again deduce that \(N^{\psi}\) is
  always invertible, and its inverse coincides with \(\psi(N^{\psi^{-1}})\).
\end{proof}
Let us study the structure of the group \(\Aut \bb{C}\Q_{r}\) in more detail,
following \cite[Section 6.1]{bp08}. Let \(\mathcal{I}_{r}\) denote the
two-sided ideal in \(\bb{C}\Q_{r}\) generated by \(e_{2}\), \(d_{1}, \dots,
d_{r}\) and \(b_{1}, \dots, b_{r}\). Clearly
\[ \bb{C}\Q_{r}/\mathcal{I}_{r} \isom \falg{a,a^{*}} = \bb{C}\Q_{0} \]
where \(\Q_{0}\) is the quiver with a single vertex and two loops \(a\) and
\(a^{*}\) on it. 

Every \(\bb{C}^{2}\)-linear map \(\bb{C}\Q_{r}\to \bb{C}\Q_{r}\) preserves the
ideal \(\mathcal{I}_{r}\), hence every \(\psi\in \Aut \bb{C}\Q_{r}\) descends
to an automorphism of \(\bb{C}\Q_{0}\) and this gives a map
\[ \app{\pi}{\Aut \bb{C}\Q_{r}}{\Aut \bb{C}\Q_{0}} \]
which is a morphism of groups. On the other hand, every automorphism of
\(\bb{C}\Q_{0}\) extends to an automorphism of \(\bb{C}\Q_{r}\) acting as the
identity on the additional arrows, hence we have a split extension of groups,
i.e. a semidirect product
\begin{equation}
  \label{eq:sdp-Aut}
  \Aut \bb{C}\Q_{r} = K\rtimes \Aut \bb{C}\Q_{0}
\end{equation}
where \(K\) is the kernel of \(\pi\), that is the (normal) subgroup of
automorphisms of \(\bb{C}\Q_{r}\) which fix \(a\) and \(a^{*}\) after killing
all the other arrows. For lack of a better word, we shall call such an
automorphism \emph{reduced}.
\begin{defn}
  The subgroup \(K\) of \(\Aut \bb{C}\Q_{r}\) defined by the splitting
  \eqref{eq:sdp-Aut} is called the group of \textbf{reduced automorphisms} of
  \(\bb{C}\Q_{r}\).
\end{defn}
In the sequel we will be particularly interested in \emph{symplectic}
automorphisms, that is automorphisms of \(\bb{C}\Q_{r}\) fixing its moment
element. Quite generally, given a quiver \(Q\) and a path \(p\in \bb{C}Q\) we
can define
\begin{equation}
  \label{eq:def-aut-fix}
  \Aut(\bb{C}Q;p)\deq \set{\psi\in \Aut \bb{C}Q | \psi(p)=p}
\end{equation}
as the subgroup of automorphisms of \(\bb{C}Q\) fixing \(p\). Now suppose we
take some path \(c_{r}\in \bb{C}\Q_{r}\) and let \(c_{0}\) be the image of
\(c_{r}\) in \(\bb{C}\Q_{0}\) modulo \(\mathcal{I}_{r}\). Then we can repeat
the above argument to show that the group \(\Aut (\bb{C}\Q_{r};c_{r})\) of
\(c_{r}\)-preserving automorphisms of \(\bb{C}\Q_{r}\) is again a semidirect
product
\begin{equation}
  \label{eq:sdp-simp}
  \Aut (\bb{C}\Q_{r};c_{r}) = K_{c_{r}}\rtimes \Aut (\bb{C}\Q_{0};c_{0})
\end{equation}
where now \(K_{c_{r}}\) is the subgroup of reduced automorphisms fixing
\(c_{r}\).

\section{The non-commutative symplectic geometry of zigzag quivers}
\label{s:nc}

Let us denote by \((Z_{r})_{r\geq 1}\) the family of quivers defined in the
following way. Each quiver has two vertices, call them \(1\) and \(2\), and a
loop \(\app{a}{1}{1}\). Moreover, the quiver \(Z_{r}\) has:
\begin{itemize}
\item an arrow \(\app{x_{i}}{2}{1}\) for each \(i = 1, \dots,
  \ceil{\frac{r}{2}}\), and
\item an arrow \(\app{y_{j}}{1}{2}\) for each \(j = 1, \dots,
  \floor{\frac{r}{2}}\).
\end{itemize}
Here \(\floor{z}\) and \(\ceil{z}\) denote, respectively, the largest integer
not greater than \(z\) and the smallest integer not less than \(z\), as usual.
It follows that the double \(\ol{Z}_{r}\) has an additional loop
\(\app{a^{*}}{1}{1}\), \(\ceil{\frac{r}{2}}\) additional arrows
\(\app{x_{i}^{*}}{1}{2}\) and \(\floor{\frac{r}{2}}\) additional arrows
\(\app{y_{j}^{*}}{2}{1}\). It is immediate to check that \(\ol{Z}_{r}\)
coincides with the quiver \eqref{eq:Qbar}. We will identify the generators of
\(\bb{C}\Q_{r}\) (as labeled in the previous section) with the generators in
\(\bb{C}\ol{Z}_{r}\) in the following way:
\[ d_{\alpha} = 
\begin{cases}
  -x_{(\alpha+1)/2} &\text{ for } \alpha \text{ odd}\\
  y^{*}_{\alpha/2} &\text{ for } \alpha \text{ even}
\end{cases}
\]
\[ b_{\alpha} =
\begin{cases}
  x^{*}_{(\alpha+1)/2} &\text{ for } \alpha \text{ odd}\\
  y_{\alpha/2} &\text{ for } \alpha \text{ even.}
\end{cases}
\]
In terms of this new notation, the matrices \(D\), \(B\) and \(E\) read
\begin{equation}
  \label{eq:7}
  D = \begin{pmatrix}
    -x_{1}\\
    y_{1}^{*}\\
    -x_{2}\\
    y_{2}^{*}\\
    \vdots
  \end{pmatrix},
  \qquad
  B =
  \begin{pmatrix}
    x_{1}^{*} & y_{1} & x_{2}^{*} & y_{2} & \hdots
  \end{pmatrix},
\end{equation}
\begin{equation}
  \label{eq:8}
  E = 
  \begin{pmatrix}
    -x_{1}x_{1}^{*} & -x_{1}y_{1} & -x_{1}x_{2}^{*} & -x_{1}y_{2} & \hdots\\
    y_{1}^{*}x_{1}^{*} & y_{1}^{*}y_{1} & y_{1}^{*}x_{2}^{*} & y_{1}^{*}y_{2} & \hdots\\
    -x_{2}x_{1}^{*} & -x_{2}y_{1} & -x_{2}x_{2}^{*} & -x_{2}y_{2} & \hdots\\
    y_{2}^{*}x_{1}^{*} & y_{2}^{*}y_{1} & y_{2}^{*}x_{2}^{*} & y_{2}^{*}y_{2} & \hdots\\
    \vdots & \vdots & \vdots & \vdots & \ddots
  \end{pmatrix}.
\end{equation}
For later use, we define the sequence \((q_{r})_{r\geq 1}\) by \(q_{r}\deq
\ceil{r/2} \floor{r/2}\); these are usually known as \emph{quarter-square
  numbers}. Clearly, \(q_{r}\) coincides with the number of different cycles at \(1\) that
may be obtained by composing one of the arrows \(x_{1}, \dots,
x_{\ceil{r/2}}\) with one of the arrows \(y_{1}, \dots, y_{\floor{r/2}}\) in
the quiver \(Z_{r}\).
\begin{rem}
  Although it will not play any r\^ole in the sequel, we note that the Poisson
  brackets \eqref{eq:25} naturally defined on \(\DR^{0}(\bb{C}\Q_{r})\)
  induce, by restriction to the subalgebra \(\mathcal{A}_{1}\), a Lie algebra
  structure on the linear space \(\DR^{0}(\mathcal{A}_{1})\) whose nonzero
  part reads
  \[ \{a,a^{*}\} = 1 \quad\text{ and }\quad \{e_{\alpha\beta}, e_{\gamma\delta}\}
  = \delta_{\beta\gamma} e_{\alpha\delta} - \delta_{\alpha\delta}
  e_{\gamma\beta}. \]
  In particular the \(e_{\alpha\beta}\) obey the same relations that hold in
  the Lie algebra \(\mathfrak{gl}_{r}(\bb{C})\), as emphasized in \cite[\S
  5.1]{bp08} for the case \(r=2\).
\end{rem}
According to the definition \eqref{eq:omega-nc}, the non-commutative
symplectic form of \(Z_{r}\) is given by
\[ \omega = \de a\, \de a^{*} + \sum_{i=1}^{\ceil{r/2}} \de x_{i} \de x_{i}^{*}
+ \sum_{j=1}^{\floor{r/2}} \de y_{j} \de y_{j}^{*} \]
and the corresponding moment element in \(\bb{C}\ol{Z}_{r} = \bb{C}\Q_{r}\) is
\[ c_{r}\deq [a,a^{*}] + \sum_{i=1}^{\ceil{r/2}} [x_{i},x_{i}^{*}] +
\sum_{j=1}^{\floor{r/2}} [y_{j},y_{j}^{*}]. \]
An element of \(\Aut \bb{C}\Q_{r}\) fixing \(c_{r}\) will be called a
\emph{symplectic automorphism of} \(\bb{C}\Q_{r}\), or a
\emph{symplectomorphism} for short. Using the notation introduced in
\eqref{eq:def-aut-fix}, the group of symplectic automorphisms of \(\Q_{r}\)
will be denoted by \(\Aut(\bb{C}\Q_{r};c_{r})\). 

Notice that the quotient of \(c_{r}\) by the ideal \(\mathcal{I}_{r}\),
\[ c_{0}\deq [a,a^{*}], \]
is exactly the moment element corresponding to the non-commutative symplectic
form on the path algebra \(\bb{C}\Q_{0}\) (with \(\Q_{0}\) seen as a double in
the obvious way). We conclude by \eqref{eq:sdp-simp} that every symplectic
automorphism of \(\bb{C}\Q_{r}\) can be factorized in a unique way as the
composition of a reduced symplectic automorphism followed by a symplectic
automorphism of \(\bb{C}\Q_{0}\) (acting only on \(a\) and \(a^{*}\)). As the
group \(\Aut (\bb{C}\Q_{0};c_{0})\) is ``well-known'' (it is isomorphic to the
automorphism group of the first Weyl algebra, see the proof of lemma
\ref{lem:aut0-tame} below), the problem of understanding the group
\(\Aut(\bb{C}\Q_{r};c_{r})\) largely reduces to understanding its subgroup
\(K_{c_{r}}\) of reduced automorphisms.

Next we note that \(c_{r}\) can be decomposed as \(c_{r}^{1} + c_{r}^{2}\),
where
\[ c_{r}^{1}\deq [a,a^{*}] + \sum_{i=1}^{\ceil{r/2}} x_{i}x_{i}^{*} -
\sum_{j=1}^{\floor{r/2}} y_{j}^{*}y_{j} = [a,a^{*}] - \sum_{\alpha=1}^{r}
d_{\alpha}b_{\alpha} \]
is a cycle based at \(1\) and
\[ c_{r}^{2}\deq - \sum_{i=1}^{\ceil{r/2}} x_{i}^{*}x_{i} +
\sum_{j=1}^{\floor{r/2}} y_{j}y_{j}^{*} = \sum_{\alpha=1}^{r}
b_{\alpha}d_{\alpha} \]
is a cycle based at \(2\). Being \(\bb{C}^{2}\)-linear, every automorphism in
\(\Aut(\bb{C}\Q_{r};c_{r})\) must preserve separately the two cycles
\(c_{r}^{1}\) and \(c_{r}^{2}\); this means that
\[ \Aut(\bb{C}\Q_{r};c_{r}) = \Aut(\bb{C}\Q_{r};c_{r}^{1}) \cap
\Aut(\bb{C}\Q_{r};c_{r}^{2}). \]
The second component of this intersection can be characterized quite
satisfactorily.
\begin{teo}
  \label{teo:aut-fix-cr2}
  An automorphism \(\psi\in \Aut \bb{C}\Q_{r}\) fixes \(c_{r}^{2}\) if and
  only if \((N^{\psi})^{-1} = M^{\psi}\) in \(\GL_{r}(\mathcal{A}_{1})\).
\end{teo}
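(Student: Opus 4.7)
The plan is to compute $\psi(c_r^{2})$ explicitly using the defining relations for $M^{\psi}$ and $N^{\psi}$, rearrange the result so that $\psi(c_r^2)-c_r^2$ takes the standard shape $\sum_{\beta,\gamma} b_{\beta} T_{\beta\gamma} d_{\gamma}$ for an appropriate matrix $T \in \Mat_{r}(\mathcal{A}_{1})$, and then conclude by showing that this ``sandwich'' sum vanishes in $\bb{C}\Q_{r}$ if and only if $T = 0$.

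More precisely, using the two crossed-morphism identities from the statement of theorem \ref{teo:cr-mor}, applying $\psi$ to $c_{r}^{2} = \sum_{\alpha} b_{\alpha} d_{\alpha}$ yields
\[ \psi(c_{r}^{2}) = \sum_{\alpha=1}^{r} \psi_{21}(b_{\alpha})\, \psi_{12}(d_{\alpha}) = \sum_{\alpha,\beta,\gamma} b_{\beta} N^{\psi}_{\beta\alpha} M^{\psi}_{\alpha\gamma} d_{\gamma} = \sum_{\beta,\gamma} b_{\beta} (N^{\psi} M^{\psi})_{\beta\gamma} d_{\gamma}. \]
Writing $c_{r}^{2} = \sum_{\beta,\gamma} b_{\beta} \delta_{\beta\gamma} d_{\gamma}$, the condition $\psi(c_{r}^{2}) = c_{r}^{2}$ becomes
\[ \sum_{\beta,\gamma} b_{\beta} T_{\beta\gamma} d_{\gamma} = 0 \qquad \text{with } T \deq N^{\psi} M^{\psi} - I_{r} \in \Mat_{r}(\mathcal{A}_{1}). \]
The ``if'' direction is then immediate: if $M^{\psi} = (N^{\psi})^{-1}$, then $T = 0$ and the above computation gives $\psi(c_{r}^{2}) = c_{r}^{2}$.

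For the ``only if'' direction I need to show that the $\bb{C}$-linear map $\Mat_{r}(\mathcal{A}_{1}) \to \bb{C}\Q_{r}$ sending $T \mapsto \sum_{\beta,\gamma} b_{\beta} T_{\beta\gamma} d_{\gamma}$ is injective. Starting from $\sum_{\beta,\gamma} b_{\beta} T_{\beta\gamma} d_{\gamma} = 0$, multiply on the right by $b_{\delta}$ and use $d_{\gamma} b_{\delta} = e_{\gamma\delta}$ to rewrite the relation as
\[ \sum_{\beta} b_{\beta} \Bigl( \sum_{\gamma} T_{\beta\gamma} e_{\gamma\delta} \Bigr) = 0 \qquad \text{in } \mathcal{A}_{21}, \]
valid for every $\delta = 1, \dots, r$. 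By lemma \ref{lem:A21-free}, $\mathcal{A}_{21}$ is a free right $\mathcal{A}_{1}$-module with basis $(b_{1}, \dots, b_{r})$, so each coefficient must vanish: $\sum_{\gamma} T_{\beta\gamma} e_{\gamma\delta} = 0$ in $\mathcal{A}_{1}$ for all $\beta, \delta$.

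To finish, I would invoke the free-algebra structure of $\mathcal{A}_{1}$ on the generators $a$, $a^{*}$, $(e_{\alpha\beta})$: for fixed $\beta, \delta$, each term $T_{\beta\gamma} e_{\gamma\delta}$ is a linear combination of monomials whose last letter is the generator $e_{\gamma\delta}$, and these final letters are pairwise distinct as $\gamma$ varies. Hence no cancellation between different values of $\gamma$ is possible, so each $T_{\beta\gamma} e_{\gamma\delta} = 0$ term-by-term; since $\mathcal{A}_{1}$ is a (free algebra, hence) domain, this forces $T_{\beta\gamma} = 0$ for all $\beta, \gamma$, which is precisely $N^{\psi} M^{\psi} = I_{r}$, i.e. $(N^{\psi})^{-1} = M^{\psi}$. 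The only mildly delicate point is the injectivity of the sandwich map, but with both freeness lemmas and the free-algebra description of $\mathcal{A}_{1}$ already available, it reduces to these two short steps; the rest is the bookkeeping that produces the product $N^{\psi} M^{\psi}$ in the first place.
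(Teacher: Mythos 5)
Your proposal is correct and follows essentially the same route as the paper: both compute \(\psi(c_{r}^{2}) = \sum_{\beta,\gamma} b_{\beta}(N^{\psi}M^{\psi})_{\beta\gamma} d_{\gamma}\) and then kill the matrix \(N^{\psi}M^{\psi}-I\) by a freeness argument. The only (cosmetic) difference is that you multiply on one side by \(b_{\delta}\) and invoke Lemma \ref{lem:A21-free} before passing to the free algebra \(\mathcal{A}_{1}\), whereas the paper multiplies by \(d_{1}\) on the left and \(b_{1}\) on the right in one step and applies the freeness of \(\mathcal{A}_{1}\) directly.
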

\begin{proof}
  We have that
  \[ \psi(c_{r}^{2}) = \psi(\sum_{\alpha=1}^{r} b_{\alpha}d_{\alpha}) =
  \sum_{\alpha,\beta,\gamma=1}^{r} b_{\gamma} N^{\psi}_{\gamma\alpha}
  M^{\psi}_{\alpha\beta} d_{\beta}. \]
  Then \(\psi\) fixes \(c_{r}^{2}\) if and only if
  \[ \sum_{\beta,\gamma=1}^{r} b_{\gamma} (N^{\psi}M^{\psi})_{\gamma\beta}
  d_{\beta} = \sum_{\alpha=1}^{r} b_{\alpha}d_{\alpha}, \]
  or in other words
  \[ \sum_{\beta,\gamma=1}^{r} b_{\gamma} (N^{\psi}M^{\psi} - I)_{\gamma\beta}
  d_{\beta} = 0. \]
  Set \(C\deq N^{\psi}M^{\psi} - I\). Multiplying the previous relation by
  \(d_{1}\) from the left and \(b_{1}\) from the right we get
  \[ \sum_{\beta,\gamma=1}^{r} d_{1}b_{\gamma} C_{\gamma\beta} d_{\beta}b_{1}
  = \sum_{\gamma=1}^{r} e_{1\gamma} \sum_{\beta=1}^{r} C_{\gamma\beta}
  e_{\beta 1} = 0. \]
  As \(\mathcal{A}_{1}\) is a free algebra, this implies \(\sum_{\beta}
  C_{\gamma\beta} e_{\beta 1} = 0\) for every \(\gamma\), which in turn
  implies that \(C_{\gamma\beta} = 0\) for every \(\gamma\) and \(\beta\).
  Hence \(C=0\), i.e. \(N^{\psi} = (M^{\psi})^{-1}\), as claimed.
\end{proof}
It follows that for \(c_{r}^{2}\)-preserving automorphisms the two crossed
morphisms \eqref{eq:cr-mor} are related by the isomorphism
\(\app{(\cdot)^{-1}}{\GL_{r}(\mathcal{A}_{1})^{\op}}{\GL_{r}(\mathcal{A}_{1})}\),
i.e. the following diagram commutes:
\[ \xymatrix{
  \Aut (\bb{C}\Q_{r};c_{r}^{2}) \ar[r]^{M} \ar[dr]_{N} &
  \GL_{r}(\mathcal{A}_{1})^{\op} \ar[d]^{(\cdot)^{-1}}\\
  & \GL_{r}(\mathcal{A}_{1})} \]
Unfortunately, the condition of preserving \(c_{r}^{1}\) does not seem to
admit such a straightforward description (even considering only reduced
automorphisms). For this reason the structure of the group
\(\Aut(\bb{C}\Q_{r};c_{r})\) remains somewhat unclear.

To remedy this situation we introduce some classes of ``nice'' automorphisms.
\begin{defn}
  An automorphism \(\psi\in \Aut \bb{C}\Q_{r}\) will be called:
  \begin{itemize}
  \item \textbf{triangular}\footnote{In \cite{bp08} and \cite{mt13}
      automorphisms of this kind are called \emph{strictly triangular},
      reserving the word \emph{triangular} for the larger class of
      automorphisms preserving the subalgebra \(\bb{C}Z_{r}\). As this weaker
      notion will play no r\^ole in the sequel, we decided to drop the
      ``strictly'' part and use the simplest name for the most useful
      concept.} if it fixes each of the arrows \((a, x_{1}, \dots,
    x_{\ceil{r/2}}, y_{1}, \dots, y_{\floor{r/2}})\),
  \item \textbf{op-triangular} if it fixes each of the arrows \((a^{*},
    x_{1}^{*}, \dots, x_{\ceil{r/2}}^{*}, y_{1}^{*}, \dots, y_{\floor{r/2}}^{*})\),
  \item \textbf{affine} if it sends each arrow in \(\Q_{r}\) to a polynomial
    which is \emph{at most linear} in each arrow.
  \end{itemize}
\end{defn}
Notice that the definition of (op-)triangularity depends on the choice of the
unstarred arrows in \(\Q_{r}\).

Our next goal is to characterize the symplectic automorphisms in each of these
classes.

\subsection{Triangular symplectomorphisms}
\label{staut}

Let \(\psi\) be a triangular automorphism of \(\bb{C}\Q_{r}\). Without
loss of generality, we can write the action of \(\psi\) as
\[ \begin{cases}
  a\mapsto a\\
  x_{i}\mapsto x_{i}\\
  y_{j}\mapsto y_{j}
\end{cases}
\qquad
\begin{cases}
  a^{*}\mapsto a^{*} + h\\
  x_{i}^{*}\mapsto x_{i}^{*} + s_{i}\\
  y_{j}^{*}\mapsto y_{j}^{*} + t_{j}
\end{cases} \]
for some \(h\in \mathcal{A}_{1}\), \(s_{1}, \dots, s_{\ceil{r/2}}\in
\mathcal{A}_{21}\) and \(t_{1}, \dots, t_{\floor{r/2}}\in \mathcal{A}_{12}\).
Now \(\psi\) will preserve \(c_{r}^{2}\) if and only if
\begin{equation}
  \label{eq:9}
  \sum_{i} s_{i}x_{i} = \sum_{j} y_{j}t_{j}.
\end{equation}
This equality can only hold when each \(s_{i}\) can be written as \(\sum_{j}
y_{j}u_{ij}\) for some coefficients \(u_{ij}\in \mathcal{A}_{1}\); then
equation \eqref{eq:9} becomes
\[ \sum_{j} y_{j} (\sum_{i} u_{ij}x_{i} - t_{j}) = 0, \]
which implies that \(t_{j} = \sum_{i} u_{ij}x_{i}\) for every \(j = 1, \dots,
\floor{r/2}\). Hence \(\psi\) acts as
\begin{equation}
  \label{eq:10}
  \begin{cases}
    a^{*}\mapsto a^{*} + h\\
    x_{i}^{*}\mapsto x_{i}^{*} + \sum_{j} y_{j}u_{ij}\\
    y_{j}^{*}\mapsto y_{j}^{*} + \sum_{i} u_{ij}x_{i}
  \end{cases} \quad\text{ for some } u_{ij}\in \mathcal{A}_{1}.
\end{equation}
Such an automorphism preserves \(c_{r}^{1}\) if and only if
\[ [a,h] + \sum_{i,j} x_{i}y_{j}u_{ij} - \sum_{i,j} u_{ij}x_{i}y_{j} = 0 \]
or, using \(e_{2i-1,2j} = -x_{i}y_{j}\),
\begin{equation}
  \label{eq:sts-aut}
  [a,h] - \sum_{i,j} [e_{2i-1,2j}, u_{ij}] = 0.
\end{equation}
Thus we are led to study equations in \(\mathcal{A}_{1}\) of the form
\begin{equation}
  \label{eq:char-eq}
  \sum_{k=0}^{n} [g_{k},u_{k}] = 0
\end{equation}
where \(g_{0}, \dots, g_{n}\) are (distinct) generators of \(\mathcal{A}_{1}\)
and \(u_{0}, \dots, u_{n}\) are unknowns. This problem is completely solved by
the following result.
\begin{teo}
  \label{teo:sol-char-eq}
  The \((n+1)\)-tuple \((u_{0}, \dots, u_{n})\) satisfies equation
  \eqref{eq:char-eq} if and only if there exists \(f\in
  \ol{\DR}^{0}(\falg{g_{0}, \dots, g_{n}})\) such that \(u_{k} =
  \frac{\partial f}{\partial g_{k}}\) for all \(k=0\dots n\).
\end{teo}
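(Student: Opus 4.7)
The plan is to split into sufficiency and necessity. For sufficiency, suppose $u_k = \partial f/\partial g_k$ for some $f\in\ol{\DR}^0(\falg{g_0,\dots,g_n})$. By linearity it is enough to verify the claim on a single necklace word $f = g_{i_1}\cdots g_{i_m}$, for which $\partial f/\partial g_k = \sum_{j:i_j = k} g_{i_{j+1}}\cdots g_{i_{j-1}}$ (with cyclic indices). Substituting into $\sum_k [g_k,\partial f/\partial g_k]$ and writing $f_j \deq g_{i_j}g_{i_{j+1}}\cdots g_{i_{j-1}}$ for the $j$-th cyclic rotation, the sum telescopes to $\sum_{j=1}^m (f_j - f_{j+1}) = 0$.

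For necessity, I would expand each $u_k = \sum_w c_{k,w}\, w$ in the monomial basis of $\mathcal{A}_1$ (words in the generators of $\mathcal{A}_1$, of which $g_0,\dots,g_n$ are a subset) and extract the constraints implied by \eqref{eq:char-eq}. For any nonempty word $w'$, its coefficient in $\sum_k[g_k,u_k]$ equals $+c_{k,w'_L}$ when $w'$ starts with $g_k$ (where $w'_L$ strips the leading letter) minus $c_{l,w'_R}$ when $w'$ ends with $g_l$ (where $w'_R$ strips the trailing letter). Requiring this to vanish for every $w'$ produces two families of relations: (i) if $w$ begins or ends with a generator of $\mathcal{A}_1$ outside $\{g_0,\dots,g_n\}$, then $c_{k,w}=0$ for every $k$; (ii) a rotation identity $c_{k,w'_L} = c_{l,w'_R}$ when $w'$ both begins with $g_k$ and ends with $g_l$.

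The crux—and in my view the main obstacle—is a rotation bookkeeping to show that (i) and (ii) force every monomial of $u_k$ to lie in $F \deq \falg{g_0,\dots,g_n}$. Given $w = s_1\cdots s_L$ with some $s_{i_0}\notin\{g_0,\dots,g_n\}$ (choose $i_0$ maximal, so $s_{i_0+1},\dots,s_L\in\{g_0,\dots,g_n\}$), iterate the transfer $(k,w)\mapsto (l,w')$ from (ii), with $g_l = s_L$ and $w' = g_k s_1\cdots s_{L-1}$, effectively shifting the extended word $g_k w$ cyclically; after $L - i_0$ such transfers the trailing letter of the current word equals $s_{i_0}$, at which stage relation (i) annihilates the amplitude, and the chain of equalities in (ii) propagates zero back to $c_{k,w}$. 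Once each $u_k\in F$, relation (ii) restricted to $F$-monomials shows that $c_{k,w_0}$ depends only on the necklace class of $g_k w_0$; letting $C_w$ denote this common value and setting $f\deq \sum_w C_w\cdot w \in \ol{\DR}^0(F)$ (sum over necklaces in $F$), the combinatorial formula for the necklace derivative yields $u_k = \partial f/\partial g_k$, as required.
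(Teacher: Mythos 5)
Your argument follows the same route as the paper's Appendix \ref{s:app}: the coefficient relations you call (i) and (ii) are exactly the paper's Lemmas \ref{lem:words-G}--\ref{lem:cyc-ij} (each word in $\supp u_i$ begins and ends with a letter of $G$, plus the rotation identity relating $mg_j\in\supp u_i$ to $g_im\in\supp u_j$), your transfer iteration is the paper's cyclic-group action on the set $S_{\vec{u}}$ of pairs $(i,w)$, and the orbit-by-orbit reconstruction of $f$ is the same. Working directly with the coefficients $c_{k,w}$ rather than with supports is a clean way to package the cancellation arguments, and your telescoping computation replaces the paper's citation of Ginzburg for the easy direction; neither is a substantive difference.

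There is, however, one genuine error in your last step: setting $f=\sum_{w}C_{w}\,w$ with $C_{w}$ the common coefficient along the necklace $w$ does \emph{not} give $u_{k}=\partial f/\partial g_{k}$ when the necklace is periodic. If $g_{k}w_{0}=z^{d}$ with $z$ primitive and $d>1$, the monomial $w_{0}$ is produced $d$ times in $\partial(g_{k}w_{0})/\partial g_{k}$ (once for each position of the necklace at which the cyclic rotation returns $g_{k}w_{0}$), so your $f$ yields $d\,c_{k,w_{0}}$ instead of $c_{k,w_{0}}$. Concretely, $(u_{0},u_{1})=(bab,\,aba)$ solves $[a,u_{0}]+[b,u_{1}]=0$ with common coefficient $1$ on the necklace $abab$, yet $\partial(abab)/\partial a=2bab$; the correct primitive is $\tfrac{1}{2}abab$. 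The fix is to divide $C_{w}$ by the period of the necklace --- precisely the combinatorial factor $c_{w}$ that the paper's proof inserts (``divided by $c_{w}$ if necessary''). With that normalization your proof is complete.
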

The above theorem is known (see e.g. Proposition 1.5.13 in \cite{ginz06});
however, as we were unable to locate a complete proof in the literature, we
supply our own proof in Appendix \ref{s:app}.

It is now easy to explicitly describe all the triangular symplectomorphisms of
\(\bb{C}\Q_{r}\). Let us denote by \(\ST_{c}\) the subgroup of
\(\Aut(\bb{C}\Q_{r};c_{r})\) consisting of such automorphisms. Let also
\(F_{r}\) denote the (free) subalgebra of \(\mathcal{A}_{1}\) generated by the
\(q_{r}+1\) elements \(a\) and \(b_{ij}\deq -e_{2i-1,2j} = x_{i}y_{j}\) for
\(i=1\dots \ceil{r/2}\) and \(j=1\dots \floor{r/2}\). We define a map
\[ \app{\Lambda}{\ol{\DR}^{0}(F_{r})}{\Aut \bb{C}\Q_{r}} \]
by letting a necklace word \(f\in \ol{\DR}^{0}(F_{r})\) correspond to the
triangular automorphism
\[ \begin{cases}
  a\mapsto a\\
  x_{i}\mapsto x_{i}\\
  y_{j}\mapsto y_{j}
\end{cases}
\qquad
\begin{cases}
  a^{*}\mapsto a^{*} + \frac{\partial f}{\partial a}\\
  x_{i}^{*}\mapsto x_{i}^{*} + \sum_{j=1}^{\floor{r/2}} y_{j}
  \frac{\partial f}{\partial b_{ij}}\\
  y_{j}^{*}\mapsto y_{j}^{*} + \sum_{i=1}^{\ceil{r/2}}
  \frac{\partial f}{\partial b_{ij}} x_{i}
\end{cases} \]
\begin{teo}
  \label{teo:iso-tri}
  The map \(\Lambda\) is an isomorphism between the abelian group
  \(\p{\ol{\DR}^{0}(F_{r}),+}\) and \(\ST_{c}\).
\end{teo}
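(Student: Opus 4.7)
The plan is to verify four things in sequence: that $\Lambda(f)$ really does land in $\ST_c$ for every $f\in\ol{\DR}^0(F_r)$, that $\Lambda$ respects the group operations, that $\Lambda$ is injective, and that $\Lambda$ is surjective.

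I would start by noting that the prescription for $\Lambda(f)$ defines a unique $\bb{C}^2$-linear algebra endomorphism of $\bb{C}\Q_r$ by the universal property of the path algebra. The key structural remark is that the subalgebra $F_r\subseteq\mathcal{A}_1$ is pointwise fixed by every $\Lambda(h)$: its generators are $a$ (fixed directly) and $b_{ij}=x_iy_j$ (a product of the fixed generators $x_i$ and $y_j$). It follows by a one-line telescoping calculation that $\Lambda(f)\circ\Lambda(-f)=\Lambda(-f)\circ\Lambda(f)=\id$, since for example $\Lambda(f)(a^*-\partial f/\partial a)=(a^*+\partial f/\partial a)-\partial f/\partial a=a^*$; the same observation gives $\Lambda(f)\circ\Lambda(g)=\Lambda(f+g)$, so each $\Lambda(f)$ is invertible and $\Lambda$ is a group homomorphism. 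To see that $\Lambda(f)$ is symplectic, I would decompose $c_r=c_r^1+c_r^2$ and check each piece. Preservation of $c_r^2$ is automatic from the particular shape of the correction terms given by \eqref{eq:10}. Preservation of $c_r^1$ is equivalent, after the substitution $b_{ij}=-e_{2i-1,2j}$, to the assertion that $(h,(u_{ij}))=(\partial f/\partial a,(\partial f/\partial b_{ij}))$ satisfies \eqref{eq:sts-aut}; this is the ``if'' direction of Theorem \ref{teo:sol-char-eq} applied to the distinct generators $a,b_{ij}$.

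For injectivity, assuming $\Lambda(f)=\id$ forces every necklace derivative of $f$ with respect to a generator of $F_r$ to vanish. I would then invoke the Euler-type identity
\[\sum_{g}g\cdot\frac{\partial w}{\partial g}\equiv (\deg w)\,w\pmod{\comm{F_r}{F_r}},\]
valid for any necklace word $w$ in the free algebra $F_r$, to deduce that each positive-degree homogeneous piece of a representative of $f$ vanishes in $\ol{\DR}^0(F_r)$; the degree-zero part lies in $\bb{C}e_1$ and is killed when passing from $\DR^0$ to $\ol{\DR}^0$.

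For surjectivity, given any $\psi\in\ST_c$ the derivation recorded in \eqref{eq:10} expresses $\psi$ in terms of $h,u_{ij}\in\mathcal{A}_1$, and the additional condition of preserving $c_r^1$ becomes \eqref{eq:sts-aut}, an instance of \eqref{eq:char-eq} in $\mathcal{A}_1$ with distinct generators $g_0=a$ and $g_{ij}=b_{ij}$. Theorem \ref{teo:sol-char-eq} then produces a necklace word $f\in\ol{\DR}^0(F_r)$ whose partial derivatives recover exactly the data defining $\psi$, so $\psi=\Lambda(f)$. A pleasant by-product is that the $u_{ij}$ are automatically forced to lie in the small subalgebra $F_r$, even though a priori they could have been arbitrary elements of $\mathcal{A}_1$. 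The main obstacle in the whole argument is therefore Theorem \ref{teo:sol-char-eq} itself, deferred to Appendix \ref{s:app}; once it is granted, what remains is careful bookkeeping supported by the free-module descriptions of Lemmas \ref{lem:A12-free} and \ref{lem:A21-free}.
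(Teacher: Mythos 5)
Your proposal is correct and follows essentially the same route as the paper: both reduce everything to the characterization \eqref{eq:10}/\eqref{eq:sts-aut} of triangular symplectomorphisms and invoke Theorem \ref{teo:sol-char-eq} in both directions (the ``if'' direction for well-definedness, the ``only if'' direction for surjectivity), with the homomorphism property coming from the observation that every triangular symplectomorphism fixes $F_{r}$ pointwise. The only place you go beyond the paper's text is in justifying injectivity via the Euler-type identity, where the paper simply asserts that necklace derivatives have no kernel on $\ol{\DR}^{0}(F_{r})$; your justification is a valid way to fill in that assertion.
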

\begin{proof}
  We saw that every element of \(\ST_{c}\) has the form \eqref{eq:10} and is
  determined by a \((q_{r}+1)\)-tuple \((h,u_{ij})\) of elements of
  \(\mathcal{A}_{1}\) that satisfies equation \eqref{eq:sts-aut}. By theorem
  \ref{teo:sol-char-eq}, all those tuples are obtained by taking
  \(h=\frac{\partial f}{\partial a}\) and \(u_{ij} = \frac{\partial
    f}{\partial b_{ij}}\) for some \(f\in \ol{\DR}^{0}(F_{r})\), which gives
  exactly \(\Lambda(f)\) as defined above. It follows that \(\Lambda\) is a
  surjective map \(\ol{\DR}^{0}(F_{r})\to \ST_{c}\). It is also injective,
  since necklace derivatives have no kernel on \(\ol{\DR}^{0}(F_{r})\).
  Finally, it is a morphism of (abelian) groups:
  \[ \Lambda(f_{1}+f_{2}) = \Lambda(f_{1})\circ \Lambda(f_{2}). \]
  This follows easily by noting that necklace derivatives are
  \(\bb{C}\)-linear and \(\Lambda(f)\) translates each starred arrow by a
  non-commutative polynomial in \(a\) and \(b_{ij}=x_{i}y_{j}\), and every
  automorphism in \(\ST_{c}\) fixes such an element.
\end{proof}
When \(r\) is even, it is easy to verify that the matrix in
\(\GL_{r}(\mathcal{A}_{1})\) associated to a generic element \(\Lambda(f)\in
\ST_{c}\) via the crossed morphism \(N\) is given by
\begin{equation}
  \label{eq:N-Lambda-f}
  N^{\Lambda(f)} = I_{r} +
  \begin{pmatrix}
    \frac{\partial f}{\partial b_{11}} & \dots & \frac{\partial f}{\partial
      b_{\ceil{r/2}1}}\\
    \vdots & \ddots & \vdots\\
    \frac{\partial f}{\partial b_{1\floor{r/2}}} & \dots &
    \frac{\partial f}{\partial b_{\ceil{r/2}\floor{r/2}}}  
  \end{pmatrix}
  \otimes
  \begin{pmatrix}
    0 & 0\\
    1 & 0
  \end{pmatrix} =
  \begin{pmatrix}
    1 & 0 & 0 & 0 & \hdots\\
    \frac{\partial f}{\partial b_{11}} & 1 & \frac{\partial f}{\partial
      b_{21}} & 0 & \hdots\\
    0 & 0 & 1 & 0 & \hdots\\
    \frac{\partial f}{\partial b_{12}} & 0 & \frac{\partial f}{\partial
      b_{22}} & 1 & \hdots\\
    \vdots & \vdots & \vdots & \vdots & \ddots
  \end{pmatrix}.
\end{equation}
When \(r\) is odd one has to take the above matrix for \(r+1\) and remove the
last row and the last column. 

By theorem \ref{teo:aut-fix-cr2}, the matrix \(M^{\Lambda(f)}\) is just the
inverse of \(N^{\Lambda(f)}\); as \(\ST_{c}\) is abelian, to obtain it we
simply need to replace \(f\) with \(-f\) in \eqref{eq:N-Lambda-f}.

We also note that the triangular symplectomorphism \(\Lambda(f)\) is reduced
if and only if every monomial in \(\frac{\partial f}{\partial a}\) depends on
at least one of the variables \(b_{ij}\), since it is exactly in this case
that the translation affecting \(a^{*}\) gets killed after quotienting out by
the ideal \(\mathcal{I}_{r}\).

\subsection{Op-triangular symplectomorphisms}

To describe op-triangular symplectomorphisms we only need to adapt the above
analysis in the obvious way. Writing the action of a generic op-triangular
automorphism as
\[ \begin{cases}
  a\mapsto a + h'\\
  x_{i}\mapsto x_{i} + s'_{i}\\
  y_{j}\mapsto y_{j} + t'_{j}
\end{cases}
\qquad
\begin{cases}
  a^{*}\mapsto a^{*}\\
  x_{i}^{*}\mapsto x_{i}^{*}\\
  y_{j}^{*}\mapsto y_{j}^{*}
\end{cases} \]
for some \(h'\in \mathcal{A}_{1}\), \(s'_{1}, \dots, s'_{\ceil{r/2}}\in
\mathcal{A}_{12}\) and \(t'_{1}, \dots, t'_{\floor{r/2}}\in \mathcal{A}_{21}\)
and requiring \(c_{r}^{2}\) to be fixed we deduce that
\[ s'_{i} = \sum_{j} v_{ij}y_{j}^{*} \quad\text{ and }\quad t'_{j} = \sum_{i} 
x_{i}^{*}v_{ij} \]
for some coefficients \(v_{ij}\in \mathcal{A}_{1}\). An automorphism of this
form will fix \(c_{r}^{1}\) if and only if
\[ [a^{*},h'] + \sum_{i,j} [y_{j}^{*}x_{i}^{*}, v_{ij}] = 0. \]
This is again an equation of the form \eqref{eq:char-eq}, as
\(y_{j}^{*}x_{i}^{*} = e_{2j,2i-1}\). By theorem \ref{teo:sol-char-eq}, its
solutions are parametrized by elements of \(\ol{\DR}^{0}(F_{r}^{*})\), where
\(F_{r}^{*}\) is the free subalgebra of \(\mathcal{A}_{1}\) generated by the
\(q_{r}+1\) elements \(a^{*}\) and \(b_{ij}^{*} = e_{2i,2j-1} =
y_{j}^{*}x_{i}^{*}\), by taking
\[ h' = \frac{\partial f}{\partial a^{*}} \quad\text{ and }\quad v_{ij} =
\frac{\partial f}{\partial b_{ij}^{*}}. \]
By mimicking the proof of theorem \ref{teo:iso-tri} it is straightforward to
show that the map
\[ \app{\Lambda'}{\ol{\DR}^{0}(F_{r}^{*})}{\Aut \bb{C}\ol{Q}_{r}} \]
defined by sending \(f\in \ol{\DR}^{0}(F_{r}^{*})\) to the automorphism
\[ \begin{cases}
  a\mapsto a + \frac{\partial f}{\partial a^{*}}\\
  x_{i}\mapsto x_{i} + \sum_{j} \frac{\partial f}{\partial b_{ij}^{*}} y_{j}^{*}\\
  y_{j}\mapsto y_{j} + \sum_{i} x_{i}^{*} \frac{\partial f}{\partial b_{ij}^{*}}
\end{cases}
\qquad
\begin{cases}
  a^{*}\mapsto a^{*}\\
  x_{i}^{*}\mapsto x_{i}^{*}\\
  y_{j}^{*}\mapsto y_{j}^{*}\\
\end{cases} \]
is an isomorphism between \((\ol{\DR}^{0}(F_{r}^{*}),+)\) and the group of
op-triangular symplectic automorphisms of \(\bb{C}\Q_{r}\), that we will
denote by \(\oST_{c}\). The matrix associated to \(\Lambda'(f)\) by the
crossed morphism \(N\) is
\[ N^{\Lambda'(f)} =
\begin{pmatrix}
  1 & \frac{\partial f}{\partial b^{*}_{11}} & 0 & \frac{\partial f}
  {\partial b^{*}_{21}} & \hdots\\
  0 & 1 & 0 & 0 & \hdots\\
  0 & \frac{\partial f}{\partial b^{*}_{12}} & 1 & \frac{\partial f}
  {\partial b^{*}_{22}} & \hdots\\
  0 & 0 & 0 & 1 & \hdots\\
  \vdots & \vdots & \vdots & \vdots & \ddots
\end{pmatrix}. \]
Again, the automorphism \(\Lambda'(f)\) will be reduced exactly when each
of the monomials in \(\frac{\partial f}{\partial a^{*}}\) depends on at least
one of the variables \(b_{ij}^{*}\).

From the above results it is clear that the two groups \(\ST_{c}\) and
\(\oST_{c}\) are isomorphic. For the sequel it will be useful to define the
following explicit identification. Consider the isomorphism of linear spaces
\(\ol{\DR}^{0}(F_{r})\to \ol{\DR}^{0}(F_{r}^{*})\) obtained by mapping a
necklace word \(f(a,b_{ij})\) in \(\ol{\DR}^{0}(F_{r})\) to the necklace word
\(\tilde{f}\deq -f(a^{*},b_{ij}^{*})\) in \(\ol{\DR}^{0}(F_{r}^{*})\). We
denote by \(\app{o}{\ST_{c}}{\oST_{c}}\) the unique isomorphism of abelian
groups making the diagram
\begin{equation}
  \label{eq:11}
  \xymatrix{
    \ol{\DR}^{0}(F_{r}) \ar[d]_{\Lambda} \ar[rr]^{f\mapsto \tilde{f}} &&
    \ol{\DR}^{0}(F_{r}^{*}) \ar[d]^{\Lambda'}\\
    \ST_{c} \ar[rr]_{o} && \oST_{c}}
\end{equation}
commute, i.e. such that \(o(\Lambda(f)) = \Lambda'(\tilde{f})\).

\subsection{Affine symplectomorphisms}

Let \(\varphi\) be an affine automorphism of \(\bb{C}\Q_{r}\). Being
\(\bb{C}^{2}\)-linear, it must act as
\begin{equation}
  \label{eq:6}
  \begin{cases}
    a\mapsto A_{11}a + A_{12}a^{*} + B_{1}\\
    a^{*}\mapsto A_{21}a + A_{22}a^{*} + B_{2}
  \end{cases} \quad\text{ for some }
  \begin{pmatrix}
    A_{11} & A_{12}\\
    A_{21} & A_{22}
  \end{pmatrix}\in \GL_{2}(\bb{C}),\,
  \begin{pmatrix}
    B_{1}\\
    B_{2}
  \end{pmatrix}\in \bb{C}^{2}
\end{equation}
on the linear subspace spanned by \(a\) and \(a^{*}\) in \(\bb{C}\Q_{r}\),
whereas its action on the arrows \(1\to 2\) and \(2\to 1\) will be described
by the two \emph{complex} matrices \(M^{\varphi}\) and \(N^{\varphi}\in
\GL_{r}(\bb{C})\). Now, by theorem \ref{teo:aut-fix-cr2} the automorphism
\(\varphi\) fixes \(c_{r}^{2}\) if and only if \(M^{\varphi} =
(N^{\varphi})^{-1}\). Granted that, we can compute
\[ \varphi(c_{r}^{1}) = \varphi([a,a^{*}] - \sum_{\alpha=1}^{r} d_{\alpha}
b_{\alpha}) = \varphi([a,a^{*}]) - \sum_{\alpha,\beta,\gamma=1}^{r}
M^{\varphi}_{\alpha\beta}d_{\beta} b_{\gamma}N^{\varphi}_{\gamma\alpha}. \]
Since the entries of \(M^{\varphi}\) and \(N^{\varphi}\) are complex numbers
they commute with everything, so that
\[ \sum_{\alpha,\beta,\gamma=1}^{r} M^{\varphi}_{\alpha\beta}d_{\beta}
b_{\gamma}N^{\varphi}_{\gamma\alpha} = \sum_{\beta,\gamma} \sum_{\alpha}
N^{\varphi}_{\gamma\alpha} M^{\varphi}_{\alpha\beta} d_{\beta} b_{\gamma} = 
\sum_{\beta,\gamma} \delta_{\gamma\beta} d_{\beta} b_{\gamma} = \sum_{\beta}
d_{\beta} b_{\beta}. \]
Hence \(\varphi\) preserves \(c_{r}^{1}\) if and only if it preserves
\([a,a^{*}]\), and this happens if and only if the matrix \(A\) in the
transformation \eqref{eq:6} has determinant \(1\). Denoting by
\(\ASL_{2}(\bb{C})\) the subgroup of affine transformations of this form, we
conclude that the group of affine symplectic automorphisms of \(\bb{C}\Q_{r}\)
is the direct product
\[ \Aff_{c}\deq \ASL_{2}(\bb{C})\times \GL_{r}(\bb{C}). \]
\begin{lem}
  \label{lem:affc-red}
  An affine symplectic automorphism is reduced if and only if it fixes \(a\)
  and \(a^{*}\).
\end{lem}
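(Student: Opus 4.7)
The plan is to read off the behaviour of an affine symplectomorphism modulo the ideal $\mathcal{I}_{r}$ directly from its explicit description, and observe that the action on $a$ and $a^{*}$ is unaffected by this quotient.

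By the preceding analysis, any $\varphi\in \Aff_{c}$ is determined by a pair $(A,B)\in \ASL_{2}(\bb{C})$ together with a matrix $N^{\varphi}\in \GL_{r}(\bb{C})$; its action on $a$ and $a^{*}$ is given exactly by \eqref{eq:6}, while it sends each $d_{\alpha}$ and $b_{\alpha}$ to a $\bb{C}$-linear combination of the $d_{\beta}$'s and $b_{\beta}$'s, respectively. In particular the elements $\varphi(a)$ and $\varphi(a^{*})$ already lie in the subalgebra $\bb{C}\langle a,a^{*}\rangle$ and do not involve any of the arrows $b_{\alpha}, d_{\alpha}$ or the idempotent $e_{2}$.

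Consequently, under the projection $\pi\colon \Aut \bb{C}\Q_{r}\to \Aut \bb{C}\Q_{0}$ induced by killing $\mathcal{I}_{r}$, the induced automorphism $\pi(\varphi)$ acts on $a$ and $a^{*}$ by \emph{exactly} the same formula \eqref{eq:6}. Now $\varphi$ is reduced precisely when $\pi(\varphi) = \id_{\bb{C}\Q_{0}}$, i.e.\ when
\[
A_{11}a + A_{12}a^{*} + B_{1} = a,\qquad A_{21}a + A_{22}a^{*} + B_{2} = a^{*}
\]
holds in $\bb{C}\Q_{0} = \bb{C}\langle a,a^{*}\rangle$. Since $1,a,a^{*}$ are $\bb{C}$-linearly independent in the free algebra $\bb{C}\langle a,a^{*}\rangle$, this forces $A=I_{2}$ and $B=0$; equivalently, $\varphi$ fixes $a$ and $a^{*}$ already in $\bb{C}\Q_{r}$. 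The converse is immediate: if $\varphi$ fixes $a$ and $a^{*}$ then so does $\pi(\varphi)$, and as $\pi(\varphi)$ must act on $\bb{C}\Q_{0}$ as an automorphism fixing the generators, it equals the identity, so $\varphi$ is reduced.

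There is no real obstacle here; the only thing to watch is the clean separation between the $\ASL_{2}(\bb{C})$-factor and the $\GL_{r}(\bb{C})$-factor of $\Aff_{c}$, which is exactly what guarantees that the quotient by $\mathcal{I}_{r}$ does not mix the two components.
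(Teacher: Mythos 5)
Your proof is correct and follows essentially the same route as the paper's (very brief) justification: an affine symplectomorphism acts on $a,a^{*}$ by \eqref{eq:6}, this action descends unchanged to $\bb{C}\Q_{0}$ under the quotient by $\mathcal{I}_{r}$, and reducedness forces $A=I_{2}$, $B=0$ by linear independence of $1,a,a^{*}$. You simply spell out the details the paper leaves implicit.
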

Indeed every transformation of the form \eqref{eq:6} with \(\det A = 1\) comes
from an automorphism of \(\bb{C}\Q_{0}\). Thus the subgroup of \emph{reduced}
affine symplectic automorphisms coincides with the subgroup
\(\GL_{r}(\bb{C})\) of scalar invertible matrices inside
\(\GL_{r}(\mathcal{A}_{1})\).

\subsection{Tame symplectomorphisms}

Having defined some classes of ``nice'' symplectic automorphisms, we proceed
to consider the subgroup of \(\Aut(\bb{C}\Q_{r};c_{r})\) which is generated by
them.
\begin{defn}
  \label{def:tame-aut}
  A symplectic automorphism of \(\bb{C}\Q_{r}\) is called \textbf{tame} if it
  belongs to the subgroup generated by \(\ST_{c}\) and \(\Aff_{c}\).
\end{defn}
We denote by \(\TAut(\bb{C}\Q_{r};c_{r})\) the subgroup of tame symplectic
automorphisms. It is possible that \emph{every} symplectic automorphism of
\(\bb{C}\Q_{r}\) is tame (this is unknown even for \(r=2\)); anyway, this
issue will be irrelevant for what follows.

It is natural to ask if one can replace \(\ST_{c}\) with \(\oST_{c}\) in the
definition \ref{def:tame-aut}. When \(r\) is \emph{even} (in which case
\(\ceil{r/2} = \floor{r/2}\)), the answer is positive: let \(\mathcal{F}_{r}\)
denote the affine symplectomorphism of \(\bb{C}\Q_{r}\) defined by
\begin{equation}
  \label{eq:def-fft}
  \begin{cases}
    a\mapsto -a^{*}\\
    a^{*}\mapsto a
  \end{cases}
  \quad
  \begin{cases}
    x_{i}\mapsto -y_{i}^{*}\\
    x_{i}^{*}\mapsto y_{i}
  \end{cases}
  \quad
  \begin{cases}
    y_{i}\mapsto -x_{i}^{*}\\
    y_{i}^{*}\mapsto x_{i}
  \end{cases}
\end{equation}
We then have the following:
\begin{teo}
  \label{teo:gen-op-tr}
  The map \(\Aut(\bb{C}\Q_{r};c_{r})\to \Aut(\bb{C}\Q_{r};c_{r})\) defined by
  \(\psi\mapsto \mathcal{F}_{r}^{-1}\circ \psi\circ \mathcal{F}_{r}\)
  restricts to the isomorphism \(\app{o}{\ST_{c}}{\oST_{c}}\) defined by the
  diagram \eqref{eq:11}.
\end{teo}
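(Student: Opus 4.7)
The plan is to compute the conjugation $\Psi \deq \mathcal{F}_r^{-1} \circ \Lambda(f) \circ \mathcal{F}_r$ explicitly on each generator of $\bb{C}\Q_r$ and identify the result with $\Lambda'(\tilde{f})$. I begin by inverting \eqref{eq:def-fft}, which yields
\[ \mathcal{F}_r^{-1}(a) = a^*,\ \mathcal{F}_r^{-1}(a^*) = -a,\ \mathcal{F}_r^{-1}(x_i) = y_i^*,\ \mathcal{F}_r^{-1}(x_i^*) = -y_i,\ \mathcal{F}_r^{-1}(y_i) = x_i^*,\ \mathcal{F}_r^{-1}(y_i^*) = -x_i. \]
In particular, $\mathcal{F}_r$ swaps, up to signs, the unstarred and starred generators.

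Since $\Lambda(f)$ fixes each unstarred generator and $\mathcal{F}_r$ sends each starred generator to an unstarred one, a direct check shows that $\Psi$ fixes each of $a^*, x_i^*, y_j^*$; for instance, $\Psi(a^*) = \mathcal{F}_r^{-1}(\Lambda(f)(a)) = \mathcal{F}_r^{-1}(a) = a^*$. Being the conjugate of a symplectomorphism by the symplectomorphism $\mathcal{F}_r$, $\Psi$ is itself symplectic, so $\Psi \in \oST_c$. By the op-triangular analog of Theorem~\ref{teo:iso-tri}, there is a unique $g \in \ol{\DR}^0(F_r^*)$ with $\Psi = \Lambda'(g)$, and it remains to identify $g$ with $\tilde{f}$.

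The key observation is that $\mathcal{F}_r^{-1}$ restricts to an algebra isomorphism $\rho\colon F_r \to F_r^*$ given by $a \mapsto a^*$ and $b_{ij} = x_i y_j \mapsto y_i^* x_j^* = b_{ij}^*$. This isomorphism intertwines the necklace derivatives: for every $h \in \ol{\DR}^0(F_r)$, $\rho(\partial h / \partial a) = \partial \rho(h) / \partial a^*$ and $\rho(\partial h / \partial b_{ij}) = \partial \rho(h) / \partial b_{ij}^*$, which follows immediately from the formal definition of the necklace derivative on a word. Since $\tilde{f} = -\rho(f)$, one obtains $\mathcal{F}_r^{-1}(\partial f / \partial a) = -\partial \tilde{f} / \partial a^*$, and hence
\[ \Psi(a) = \mathcal{F}_r^{-1}\bigl(-a^* - \tfrac{\partial f}{\partial a}\bigr) = a + \tfrac{\partial \tilde{f}}{\partial a^*} = \Lambda'(\tilde{f})(a). \]

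The computations of $\Psi(x_i)$ and $\Psi(y_j)$ proceed by the same pattern: applying $\mathcal{F}_r$ first brings the generator into a starred expression (with a sign), then $\Lambda(f)$ adds the corresponding translation built from the necklace derivatives with respect to the $b_{ij}$'s, and finally $\mathcal{F}_r^{-1}$ transports everything back, producing exactly the translation $\sum_j (\partial \tilde{f}/\partial b_{ij}^*) y_j^*$ for $x_i$ and $\sum_i x_i^* (\partial \tilde{f}/\partial b_{ij}^*)$ for $y_j$. I expect the main bookkeeping obstacle to be tracking the signs contributed by $\mathcal{F}_r^{-1}$ on each starred arrow and ensuring they cancel against the minus sign in $\tilde{f} = -\rho(f)$, as well as verifying that the index conventions arising from $\mathcal{F}_r^{-1}$ match those in the defining formula for $\Lambda'$. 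Once agreement on all generators is established, uniqueness forces $\Psi = \Lambda'(\tilde{f})$, which is the content of the theorem.
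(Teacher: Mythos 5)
Your proposal is correct and follows essentially the same route as the paper's proof: a direct computation of the conjugate on each generator, checking that the starred arrows are fixed and identifying the translations of the unstarred ones with the necklace derivatives of \(\tilde{f}\) via the substitution \(a\mapsto a^{*}\), \(x_{i}y_{j}\mapsto y_{i}^{*}x_{j}^{*}\). The only additions are the (correct but inessential) remark that symplecticity together with fixing the starred arrows already places \(\Psi\) in \(\oST_{c}\), and an explicit statement of the chain-rule compatibility of \(\mathcal{F}_{r}^{-1}\) with necklace derivatives, which the paper leaves implicit.
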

\begin{proof}
  Let \(f\in \ol{\DR}^{0}(F_{r})\); we compute the action of \(\psi\deq
  \mathcal{F}_{r}^{-1}\circ \Lambda(f)\circ \mathcal{F}_{r}\) on the arrows of
  \(\Q_{r}\). It is immediate to check that \(\psi\) fixes the starred arrows.
  Now let \(p_{0}\deq \frac{\partial f}{\partial a}\), \(p_{ij}\deq
  \frac{\partial f}{\partial b_{ij}}\); they are non-commutative polynomials in
  the indeterminates \(a\) and \((x_{i}y_{j})_{i,j=1\dots r/2}\). Then, by
  direct computation,
  \[ \begin{aligned}
    \psi(a) &= \mathcal{F}_{r}^{-1}(-a^{*}-p_{0}(a,x_{1}y_{1},\dots,x_{r/2}y_{r/2}))
    = a - p_{0}(a^{*},y_{1}^{*}x_{1}^{*},\dots,y_{r/2}^{*}x_{r/2}^{*})\\
    \psi(x_{i}) &= \mathcal{F}_{r}^{-1}(-y_{i}^{*} - \sum_{k}
    p_{ki}(a,x_{1}y_{1},\dots,x_{r/2}y_{r/2}) x_{k}) = x_{i} - \sum_{k}
    p_{ki}(a^{*},y_{1}^{*}x_{1}^{*},\dots,y_{r/2}^{*}x_{r/2}^{*}) y_{k}^{*}\\
    \psi(y_{i}) &= \mathcal{F}_{r}^{-1}(-x_{i}^{*} - \sum_{k} y_{k}
    p_{ik}(a,x_{1}y_{1},\dots,x_{r/2}y_{r/2})) = y_{i} - \sum_{k} x_{k}^{*}
    p_{ik}(a^{*},y_{1}^{*}x_{1}^{*},\dots,y_{r/2}^{*}x_{r/2}^{*}).
  \end{aligned} \]
  To conclude it remains to observe that \(-p_{0}(a^{*}, y_{1}^{*}x_{1}^{*},
  \dots, y_{r/2}^{*}x_{r/2}^{*})\) coincides with the non-commutative
  polynomial \(\frac{\partial}{\partial a^{*}} \tilde{f}\), and similarly
  \(-p_{ij}(a^{*}, y_{1}^{*}x_{1}^{*}, \dots, y_{r/2}^{*}x_{r/2}^{*})\)
  coincides with \(\frac{\partial}{\partial b_{ij}^{*}} \tilde{f}\), so that
  \(\psi = \Lambda'(\tilde{f}) = o(\Lambda(f))\).
\end{proof}
It follows that every op-triangular symplectomorphism is tame and, vice versa,
every triangular symplectomorphism is generated by the affine and
op-triangular ones.

When \(r\) is \emph{odd} it is not clear if one can obtain the map \(o\) in a
similar way. As a matter of fact, in this case we do not know any general
recipe to express an op-triangular symplectomorphism as a composition of
affine and triangular ones. Fortunately, we will see in the next section that
when we restrict to the subgroup \(\mathcal{P}_{r}\) inside
\(\TAut(\bb{C}\Q_{r};c_{r})\) this difficulty disappears.

\section{The group $\mathcal{P}_{r}$}
\label{s:defP}

Let us denote by \(\STl_{c}\) the subgroup of \(\ST_{c}\) given by the image
under \(\Lambda\) of the subgroup of \(\ol{\DR}^{0}(F_{r})\) consisting of
necklace words of the form \(f = p(a)b_{11}\) for some \(p\in \bb{C}[a]\).
Clearly, \(\STl_{c}\) is isomorphic to the abelian group \((\bb{C}[a],+)\).
\begin{defn}
  \label{def:Pr}
  We denote by \(P_{r}\) the subgroup of \(\TAut(\bb{C}\Q_{r};c_{r})\)
  generated by \(\STl_{c}\) and the subgroup \(\Aff_{c}^{\red}\subseteq
  \Aff_{c}\) of reduced affine symplectic automorphisms.
\end{defn}
Since every automorphism in \(\STl_{c}\) is itself reduced (as every term in
\(\frac{\partial f}{\partial a}\) will contain an occurrence of \(b_{11}\)),
we observe that \(P_{r}\) is in fact a subgroup of \(K_{c_{r}}\). Our next aim
is to prove that this group is isomorphic to \(\GL_{r}(\bb{C}[a])\).
\begin{lem}
  The restriction of the action \eqref{eq:act-Aut-GL} to \(P_{r}\) is trivial
  on the subsets \(N(\STl_{c})\) and \(N(\Aff_{c}^{\red})\subseteq
  \GL_{r}(\mathcal{A}_{1})\).
\end{lem}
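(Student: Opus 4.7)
My plan is to reduce the statement to two simpler observations: (i) the matrix entries of any element of $N(\STl_{c})$ all lie in the subalgebra $\bb{C}[a]\subseteq \mathcal{A}_{1}$, while those of any element of $N(\Aff_{c}^{\red})$ lie in the even smaller subring $\bb{C}$; and (ii) every $\psi\in P_{r}$ fixes $\bb{C}[a]$ pointwise. Combined with the definition \eqref{eq:act-Aut-GL} of the action on $\GL_{r}(\mathcal{A}_{1})$, these two facts immediately give $\psi(M)=M$ for every $\psi\in P_{r}$ and every $M$ in either subset, which is what the lemma asserts.

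To establish (i), I would proceed as follows. An element of $\STl_{c}$ is by definition of the form $\Lambda(f)$ with $f=p(a)\,b_{11}$ for some $p\in \bb{C}[a]$, hence the only non-vanishing necklace derivative is $\partial f/\partial b_{11}=p(a)$. Plugging this into formula \eqref{eq:N-Lambda-f} (and in its obvious truncation for odd $r$) shows that every entry of $N^{\Lambda(f)}$ belongs to $\{0,\,1,\,p(a)\}\subseteq \bb{C}[a]$. For $\phi\in \Aff_{c}^{\red}$, I would invoke lemma \ref{lem:affc-red}: such a $\phi$ fixes both $a$ and $a^{*}$, so its action on the arrows between the two vertices is given by a scalar matrix in $\GL_{r}(\bb{C})\subseteq \GL_{r}(\mathcal{A}_{1})$, and the entries of $N^{\phi}$ are plain complex numbers.

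To establish (ii), it suffices to verify that each generator of $P_{r}$ fixes $a$, since the property of fixing $a$ is closed under composition and inversion. For $\Lambda(f)\in \STl_{c}$ this is immediate from the defining formula of $\Lambda$, which sends $a\mapsto a$ for any $f$; for $\phi\in \Aff_{c}^{\red}$ it is again the content of lemma \ref{lem:affc-red}. Hence $\psi(a)=a$ for every $\psi\in P_{r}$, and since $\psi_{1}$ is a $\bb{C}$-algebra automorphism of $\mathcal{A}_{1}$, it fixes the whole subalgebra $\bb{C}[a]$ pointwise, and in particular fixes every entry of each of the matrices described in (i).

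I do not anticipate any real obstacle: the argument is just a direct inspection of \eqref{eq:N-Lambda-f} combined with lemma \ref{lem:affc-red}. The content of the lemma is simply that the generators of $P_{r}$ have been chosen in such a way that the subring of $\mathcal{A}_{1}$ containing the relevant matrix entries is exactly the subring on which those generators act as the identity.
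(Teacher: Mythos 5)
Your proposal is correct and follows essentially the same route as the paper: identify that the entries of the matrices in $N(\STl_{c})$ and $N(\Aff_{c}^{\red})$ lie in $\bb{C}[a]$ (respectively $\bb{C}$), and observe that every generator of $P_{r}$ — and hence every element, since fixing $a$ is preserved under composition and inversion — fixes $a$ and therefore acts trivially entry-wise. Your explicit remark that $\psi_{1}$ is an algebra automorphism fixing $a$, hence fixing all of $\bb{C}[a]$, is a slightly tidier packaging of the same argument, but the content is identical.
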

\begin{proof}
  It suffices to show that the generators of \(P_{r}\) act trivially. It is
  clear that any \(\bb{C}^{2}\)-linear automorphism of \(\bb{C}\Q_{r}\) acting
  entry-wise on a matrix in \(\GL_{r}(\mathcal{A}_{1})\) fixes every scalar
  matrix, hence fixes \(N(\Aff_{c}^{\red}) = \GL_{r}(\bb{C})\). Now let us
  take \(\psi\in \STl_{c}\); since \(\psi = \Lambda(p(a)b_{11})\) for some
  \(p\in \bb{C}[a]\), the matrix \(N^{\psi}\) will be of the form
  \begin{equation}
    \label{eq:12}
    N^{\psi} = \begin{pmatrix}
      1 & 0 & 0 & \hdots & 0\\
      p(a) & 1 & 0 & \hdots & 0\\
      0 & 0 & 1 & \hdots & 0\\
      \vdots & \vdots & \vdots & \ddots & \vdots\\
      0 & 0 & 0 & \hdots & 1
    \end{pmatrix}
  \end{equation}
  and hence only depends on \(a\). But then \(N^{\psi}\) is fixed both by
  elements of \(\Aff_{c}^{\red}\) (by lemma \ref{lem:affc-red}) and by
  elements of \(\STl_{c}\) (because every triangular automorphism fixes
  \(a\)).
\end{proof}
\begin{teo}
  \label{teo:Np-iso}
  The restriction of the map \(N\) to the subgroup \(P_{r}\) induces an
  isomorphism of groups \(P_{r}\to \GL_{r}(\bb{C}[a])\).
\end{teo}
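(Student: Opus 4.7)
My plan is to first promote $N|_{P_r}$ from a crossed morphism to an honest group homomorphism landing in $\GL_r(\bb{C}[a])$, then establish injectivity using a centralizer argument in the free algebra, and finally establish surjectivity by recognizing the image as containing enough elementary matrices.

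For the first step, I would argue inductively on word length in the generators of $P_r$ that $N(P_r) \subseteq \GL_r(\bb{C}[a])$ and that the restricted map is a homomorphism. The base case is clear from the explicit form \eqref{eq:12} of $N^\psi$ for $\psi \in \STl_c$ and the fact that $N^\psi$ is a scalar matrix when $\psi \in \Aff_c^{\red}$. For the inductive step I would use the crossed morphism identity $N^{\sigma\psi} = N^\sigma \cdot \sigma(N^\psi)$: every generator of $P_r$ fixes $a$ (triangular automorphisms fix $a$ by definition; reduced affine symplectomorphisms fix $a$ by lemma \ref{lem:affc-red}), so any $\sigma \in P_r$ fixes $\bb{C}[a]$ pointwise, and therefore acts trivially on any matrix whose entries already lie in $\bb{C}[a]$. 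This simultaneously shows the image stays in $\GL_r(\bb{C}[a])$ and kills the crossed term.

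For surjectivity, I would invoke the classical fact that $\bb{C}[a]$ is a Euclidean domain, so $\SL_r(\bb{C}[a])$ is generated by the elementary matrices $I + p(a) e_{ij}$ with $i \neq j$; together with $\GL_r(\bb{C})$ these generate $\GL_r(\bb{C}[a])$. The image of $\Aff_c^{\red}$ under $N$ is exactly $\GL_r(\bb{C})$, while the image of $\STl_c$ supplies the single family $I + p(a)e_{21}$. To produce the remaining elementary matrices I would conjugate by permutation matrices inside $\GL_r(\bb{C})$, noting that conjugation in $\GL_r(\bb{C}[a])$ corresponds to composition in $P_r$ since the action is trivial.

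For injectivity, suppose $\psi \in P_r$ satisfies $N^\psi = I_r$. By theorem \ref{teo:aut-fix-cr2} we also have $M^\psi = I_r$, so $\psi$ fixes every arrow $b_\alpha$ and $d_\alpha$. Since every generator of $P_r$ fixes $a$, so does $\psi$; thus $\psi(a^*) = a^* + u$ for some $u \in \mathcal{A}_1$. Preservation of $c_r^1 = [a,a^*] - \sum_\alpha d_\alpha b_\alpha$ then forces $[a,u] = 0$ in the free algebra $\mathcal{A}_1$, and the centralizer of the free generator $a$ in a free algebra is just $\bb{C}[a]$, so $u \in \bb{C}[a]$. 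Finally, since $\psi \in P_r \subseteq K_{c_r}$ is reduced, its image in $\Aut\bb{C}\Q_0$ is the identity, which means $u \equiv 0 \pmod{\mathcal{I}_r}$; but $\bb{C}[a] \cap \mathcal{I}_r = 0$, forcing $u = 0$ and $\psi = \id$. The step I expect to be the most delicate is identifying the centralizer of $a$ in the free algebra $\mathcal{A}_1$; if a direct argument is preferred to quoting Bergman's centralizer theorem, one can check it by a short induction on degree.
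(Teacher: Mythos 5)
Your proposal is correct and follows essentially the same route as the paper's proof: the crossed term is killed by the observation that every generator of \(P_{r}\) fixes \(a\) (hence acts trivially on matrices over \(\bb{C}[a]\)), injectivity is reduced via \(M^{\psi}=(N^{\psi})^{-1}\) and preservation of \(c_{r}^{1}\) to \([a,h]=0\) plus reducedness, and surjectivity is obtained from \(\GL_{r}(\bb{C})=N(\Aff_{c}^{\red})\) together with the transvections \(I_{r}+p(a)\mathtt{e}_{21}=N^{\Lambda(p(a)b_{11})}\) conjugated by permutation matrices. The only cosmetic difference is that you justify generation by elementary matrices via the Euclidean property of \(\bb{C}[a]\), where the paper cites the generation of \(\GL_{r}\) over such rings by diagonal matrices and transvections.
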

\begin{proof}
  By a straightforward induction, the previous lemma implies that \(P_{r}\)
  acts trivially on the image \(N(P_{r})\subseteq \GL_{r}(\mathcal{A}_{1})\).
  This means that
  \[ N^{\psi_{2}\circ \psi_{1}} = N^{\psi_{2}} \psi_{2}(N^{\psi_{1}}) =
  N^{\psi_{2}} N^{\psi_{1}} \]
  for each \(\psi_{1},\psi_{2}\in P_{r}\), or in other words that the crossed
  morphism \(N\) becomes a genuine morphism of groups when restricted to
  \(P_{r}\). We claim that the kernel of this morphism is trivial. Indeed,
  suppose \(\psi\in P_{r}\) is such that \(N^{\psi} = I_{r}\); then by
  definition \(\psi\) fixes the arrows \(x_{i}\), \(x_{i}^{*}\), \(y_{j}\) and
  \(y_{j}^{*}\). Every automorphism in \(P_{r}\) fixes \(a\), hence \(\psi\)
  can only act nontrivially on \(a^{*}\), sending it to \(a^{*} + h\) for some
  \(h\in \mathcal{A}_{1}\). But \(\psi\) is symplectic, and in particular
  \(\psi(c_{r}^{1}) = c_{r}^{1}\), hence
  \[ [a,a^{*}] + [a,h] + \sum_{\alpha} d_{\alpha} b_{\alpha} = [a,a^{*}] +
  \sum_{\alpha} d_{\alpha}b_{\alpha} \]
  from which \([a,h] = 0\) follows. This implies that \(h\) is a polynomial in
  \(a\); then it must be zero, as \(\psi\) is reduced.
  
  It remains to prove that the image of \(\rist{N}{P_{r}}\) coincides with
  \(\GL_{r}(\bb{C}[a])\subseteq \GL_{r}(\mathcal{A}_{1})\). To do this, it is
  sufficient to show that it contains a set of generators for
  \(\GL_{r}(\bb{C}[a])\). Such a set is given (see e.g. \cite{hom89}) by the
  invertible diagonal matrices and by the (elementary) \emph{transvections},
  i.e. matrices of the form \(I_{r} + p\mathtt{e}_{\alpha\beta}\) where \(p\)
  is a polynomial and \(\mathtt{e}_{\alpha\beta}\) is the \(r\times r\) matrix
  with \(1\) at the position \((\alpha,\beta)\) and \(0\) elsewhere. Clearly,
  invertible diagonal matrices are contained in \(\GL_{r}(\bb{C}) =
  N(\Aff_{c}^{\red})\). Also, the matrix \eqref{eq:12} associated to
  \(\Lambda(p(a)b_{11})\in \STl_{c}\) is exactly the transvection by \(p\) in
  the \((2,1)\) plane of \(\bb{C}[a]^{r}\). But the symmetric group \(S_{r}\),
  embedded in \(\GL_{r}(\bb{C})\) as the subgroup of permutation matrices,
  acts transitively on the \(r\) axes of the \(\bb{C}[a]\)-linear space
  \(\bb{C}[a]^{r}\), so that by composing the transvection
  \(N^{\Lambda(p(a)b_{11})}\) with suitable permutation matrices in
  \(\GL_{r}(\bb{C})\) we obtain every possible transvection in
  \(\bb{C}[a]^{r}\), as we needed.
\end{proof}
Together with theorem \ref{teo:aut-fix-cr2}, this implies that that the
restriction of the crossed morphism \(M\) to \(P_{r}\) induces an isomorphism
\(P_{r}\to \GL_{r}(\bb{C}[a])^{\op}\).

Let us note another interesting consequence of theorem \ref{teo:Np-iso}.
Denote by
\[ \app{\Psi}{\GL_{r}(\bb{C}[a])}{P_{r}} \]
the inverse morphism to \(N\) on \(\GL_{r}(\bb{C}[a])\). By definition, given
a matrix \(A\in \GL_{r}(\bb{C}[a])\) the automorphism \(\Psi(A)\) will act on
the arrows \(1\to 2\) and \(2\to 1\) in \(\Q_{r}\) as \(B\mapsto BA\) and
\(D\mapsto A^{-1}D\) respectively, where \(B\) and \(D\) are the matrices
given by \eqref{eq:7}. Let us write again \(a^{*}\mapsto a^{*} + h\) for the
action of \(\Psi(A)\) on \(a^{*}\). As \(\Psi(A)\) fixes \(c_{r}^{1}\), the
following equality in \(\mathcal{A}_{1}\) must hold:
\begin{equation}
  \label{eq:cond-psi}
  [a,h] = \sum_{\alpha,\beta,\gamma} (A^{-1}_{\alpha\beta} e_{\beta\gamma}
  A_{\gamma\alpha} + e_{\alpha\alpha}).
\end{equation}
Then theorem \ref{teo:Np-iso} can be rephrased by saying that, for every
\(A\in \GL_{r}(\bb{C}[a])\), equation \eqref{eq:cond-psi} can be solved
uniquely for \(h\). This appears to be quite non-trivial to prove directly.
\begin{defn}
  We denote by \(\mathcal{P}_{r}\) the subgroup of
  \(\Aut(\bb{C}\Q_{r};c_{r})\) obtained by replacing, in the semidirect
  product \eqref{eq:sdp-simp}, the group \(K_{c_{r}}\) with its subgroup
  \(P_{r}\).
\end{defn}
It follows that every element in \(\mathcal{P}_{r}\) can be written in a
unique way as the composition \(\psi_{0}\circ \psi\), with \(\psi_{0}\in
\Aut(\bb{C}\Q_{0};c_{0})\) and \(\psi\in P_{r}\).
\begin{lem}
  \label{lem:aut0-tame}
  Every automorphism of \(\Aut(\bb{C}\Q_{0};c_{0})\) is tame.
\end{lem}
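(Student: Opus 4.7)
The strategy is to reduce the claim to Dixmier's classical theorem on the automorphism group of the first Weyl algebra. The algebra \(\bb{C}\Q_{0}\) is the free algebra \(\falg{a,a^{*}}\) with moment element \(c_{0}=[a,a^{*}]\), so every \(\psi\in \Aut(\bb{C}\Q_{0};c_{0})\) descends to an automorphism of the first Weyl algebra \(A_{1}=\falg{a,a^{*}}/([a,a^{*}]-1)\). A theorem of Makar-Limanov shows that the induced map \(\Aut(\bb{C}\Q_{0};c_{0})\to \Aut(A_{1})\) is an isomorphism, and Dixmier's theorem asserts that \(\Aut(A_{1})\) is generated by the affine automorphisms lying in \(\ASL_{2}(\bb{C})\) (linear action on \((a,a^{*})\) with unit-determinant matrix, plus constant translations) together with the strictly triangular automorphisms \(\tau_{p}\colon a\mapsto a,\ a^{*}\mapsto a^{*}+p(a)\) with \(p\in \bb{C}[a]\).

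I would then transfer these generators back to \(\Aut(\bb{C}\Q_{0};c_{0})\) via the inverse isomorphism and extend each to \(\bb{C}\Q_{r}\) by letting it act as the identity on the remaining arrows \(x_{i},x_{i}^{*},y_{j},y_{j}^{*}\); this is precisely the splitting described in \eqref{eq:sdp-simp}. The affine generators extend to elements of \(\Aff_{c}\), since the \(2\times 2\) block acting on \((a,a^{*})\) has determinant one and the matrices \(M\), \(N\) associated to the extension both equal \(I_{r}\in \GL_{r}(\bb{C})\). For a triangular generator \(\tau_{p}\), choose any antiderivative \(f\in \bb{C}[a]\subseteq F_{r}\) of \(p\); then every \(\partial f/\partial b_{ij}\) vanishes while \(\partial f/\partial a=p\), so the automorphism \(\Lambda(f)\in \ST_{c}\) provided by theorem \ref{teo:iso-tri} sends \(a^{*}\mapsto a^{*}+p(a)\) and fixes \(a\) together with all the arrows \(x_{i},x_{i}^{*},y_{j},y_{j}^{*}\); this matches the extension of \(\tau_{p}\). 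Composing these pieces, the extension of every element of \(\Aut(\bb{C}\Q_{0};c_{0})\) lies in the subgroup of \(\Aut(\bb{C}\Q_{r};c_{r})\) generated by \(\Aff_{c}\) and \(\ST_{c}\), which is exactly \(\TAut(\bb{C}\Q_{r};c_{r})\).

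The substantial input is Dixmier's theorem on \(\Aut(A_{1})\) together with the Makar-Limanov lifting result; both are classical but nontrivial pieces of noncommutative algebra, and identifying the correct statement to cite is the main conceptual hurdle. Once these are granted, the remaining argument is a direct verification using the explicit parametrization of \(\ST_{c}\) by necklace words in \(F_{r}\) provided by theorem \ref{teo:iso-tri}; the matching of generators on the \(\bb{C}\Q_{0}\) and \(\bb{C}\Q_{r}\) sides is forced by the obvious choice of lifts.
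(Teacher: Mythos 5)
Your proposal is correct and follows essentially the same route as the paper: both invoke Makar-Limanov to identify \(\Aut(\bb{C}\Q_{0};c_{0})\) with \(\Aut(A_{1})\), cite Dixmier's generation theorem (you use the affine-plus-triangular form of the generating set, the paper uses triangular generators together with the single flip \(\mathcal{F}_{0}\), which is an equivalent presentation), and then check that the extensions to \(\bb{C}\Q_{r}\) land in \(\Aff_{c}\) and \(\ST_{c}\). Your explicit verification that the lift of \(\tau_{p}\) is \(\Lambda(f)\) for an antiderivative \(f\in\bb{C}[a]\) of \(p\) just spells out what the paper calls ``immediate to check.''
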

\begin{proof}
  It follows from results of Makar-Limanov \cite{ml70,ml84} that the group
  \(\Aut(\bb{C}\Q_{0};c_{0})\) is isomorphic to the group of automorphisms of
  the first Weyl algebra. Moreover, in \cite{dix68} Dixmier proved that the
  latter group is generated by the family of triangular automorphisms
  \((a,a^{*})\mapsto (a, a^{*} + \frac{\partial}{\partial a}p(a))\) indexed by
  a polynomial \(p\in a\bb{C}[a]\) and by the single automorphism
  \(\mathcal{F}_{0}\) defined by \((a,a^{*})\mapsto (-a^{*},a)\). It is
  immediate to check that all these automorphisms, when extended from
  \(\bb{C}\Q_{0}\) to \(\bb{C}\Q_{r}\), are tame.
\end{proof}
From this lemma it follows that \(\mathcal{P}_{r}\) is a subgroup of
\(\TAut(\bb{C}\Q_{r};c_{r})\). Notice that the automorphism
\(\mathcal{F}_{0}\) considered in the previous proof is just the affine
symplectomorphism \(\mathcal{F}_{r}\) defined by \eqref{eq:def-fft} modulo the
ideal \(\mathcal{I}_{r}\).

We conclude this section by clarifying the r\^ole of op-triangular
symplectomorphisms inside \(\mathcal{P}_{r}\). By analogy with \(\STl_{c}\),
let us denote by \(\oSTl_{c}\) the subgroup of \(\oST_{c}\) generated by the
image under \(\Lambda'\) of necklace words of the form \(p(a^{*})b_{11}^{*}\).
As
\[ N^{\Lambda'(p(a^{*})b_{11}^{*})} = 
\begin{pmatrix}
  1 & p(a^{*}) & 0 & \hdots & 0\\
  0 & 1 & 0 & \hdots & 0\\
  0 & 0 & 1 & \hdots & 0\\
  \vdots & \vdots & \vdots & \ddots & \vdots\\
  0 & 0 & 0 & \hdots & 1
\end{pmatrix} \]
it is clear that \(\oSTl_{c}\) is isomorphic to the abelian group
\((\bb{C}[a^{*}],+)\).

The reader can easily convince himself or herself that all the steps leading
to theorem \ref{teo:Np-iso} can be carried out equally well for the subgroup
\(P_{r}'\) of \(\TAut(\bb{C}\Q_{r};c_{r})\) generated by \(\oSTl_{c}\) and
\(\Aff_{c}^{\red}\), replacing the group \(\GL_{r}(\bb{C}[a])\) with
\(\GL_{r}(\bb{C}[a^{*}])\subseteq \GL_{r}(\mathcal{A}_{1})\). In particular,
the restriction of \(N\) to \(P_{r}'\) induces an isomorphism \(P_{r}'\to
\GL_{r}(\bb{C}[a^{*}])\).

Suppose now \(r\geq 2\). Let \(\varphi\) stand for the affine
symplectomorphism acting as \(\mathcal{F}_{2}\) on the arrows \(a\),
\(x_{1}\), \(y_{1}\) and their starred version:
\[ \begin{cases}
  a\mapsto -a^{*}\\
  a^{*}\mapsto a
\end{cases}
\quad
\begin{cases}
  x_{1}\mapsto -y_{1}^{*}\\
  x_{1}^{*}\mapsto y_{1}
\end{cases}
\quad
\begin{cases}
  y_{1}\mapsto -x_{1}^{*}\\
  y_{1}^{*}\mapsto x_{1}
\end{cases} \]
and fixing all the other arrows in \(\bb{C}\Q_{r}\). Then the same
calculations used in the proof of theorem \ref{teo:gen-op-tr} show that,
independently from the parity of \(r\), the following equality holds:
\begin{equation}
  \label{eq:15}
  \varphi^{-1}\circ \Lambda(p(a)b_{11}) \circ \varphi =
  \Lambda'(-p(a^{*})b_{11}^{*}).
\end{equation}
As \(\varphi\in \mathcal{P}_{r}\) we conclude that every generator of
\(\oSTl_{c}\) belongs to \(\mathcal{P}_{r}\), hence \(P_{r}'\subseteq
\mathcal{P}_{r}\). Of course, equation \eqref{eq:15} also shows that one can
equivalently define the group \(\mathcal{P}_{r}\) as the semidirect product
\(P_{r}'\rtimes \Aut(\bb{C}\Q_{0};c_{0})\). In other words, once we have the
semidirect factor \(\Aut(\bb{C}\Q_{0};c_{0})\) at our disposal, the two groups
\(P_{r}\) and \(P_{r}'\) are completely interchangeable.

\section{The action of $\mathcal{P}_{r}$ on Gibbons-Hermsen manifolds}
\label{s:action}

In this section we are concerned with the action of (tame) symplectic
automorphisms of \(\bb{C}\Q_{r} = \bb{C}\ol{Z}_{r}\) on the manifolds
\(\mathcal{C}_{n,r}\). Let us start by recalling how this action is defined.

To begin with, we should make explicit the embedding of \(\mathcal{C}_{n,r}\)
inside the moduli space of representations of the quiver \(\ol{Z}_{r}\). To do
this we need a bijection between \(\Rep(\ol{Z}_{r},(n,1))\) and the linear
space \(V_{n,r}\) defined by \eqref{eq:def-Vnr}. Let us denote a point in
\(\Rep(\ol{Z}_{r},(n,1))\) by a \(2(r+1)\)-tuple of the form
\[ (A, \bar{A}, X_{1}, \dots, X_{\ceil{r/2}}, \bar{X}_{1}, \dots,
\bar{X}_{\ceil{r/2}}, Y_{1}, \dots, Y_{\floor{r/2}}, \bar{Y}_{1}, \dots,
\bar{Y}_{\floor{r/2}}) \]
where the arrow \(a\) is represented by the matrix \(A\), the arrow \(a^{*}\)
by the matrix \(\bar{A}\) and so on. We identify this point with a quadruple
\((X,Y,v,w)\in V_{n,r}\) using the following correspondence:
\begin{equation}
  \label{eq:m1}
  \begin{cases}
    A\leftrightarrow X\\
    \bar{A}\leftrightarrow Y
  \end{cases}
  \quad
  \begin{cases}
    X_{i}\leftrightarrow -v_{\bullet,2i-1}\\
    \bar{X}_{i}\leftrightarrow w_{2i-1,\bullet}
  \end{cases}
  \quad
  \begin{cases}
    Y_{j}\leftrightarrow w_{2j,\bullet}\\
    \bar{Y}_{j}\leftrightarrow v_{\bullet,2j}
  \end{cases}
  \quad
  \begin{array}{l}
    i=1\dots \ceil{r/2}\\
    j=1\dots \floor{r/2}.
  \end{array}
\end{equation}
In other words, we build the \(n\times r\) matrix \(v\) using the \(r\)
columns \(-X_{1}\), \(\bar{Y}_{1}\), \(-X_{2}\), \(\bar{Y}_{2}\) and so on;
similarly, we build the \(r\times n\) matrix \(w\) using the \(r\) rows
\(\bar{X}_{1}\), \(Y_{1}\), \(\bar{X}_{2}\), \(Y_{2}\) and so on.

We can sum up the correspondence between the various arrows in \(\Q_{r}\)
and their representative matrices as follows:
\begin{center}
  \begin{tabular}{l|cccccc}
    arrow in \(\ol{Z}_{r} = \Q_{r}\) & \(a\) & \(a^{*}\) &
    \(x_{i} = -d_{2i-1}\) & \(x_{i}^{*} = b_{2i-1}\) &
    \(y_{j} = b_{2j}\) & \(y_{j}^{*} = d_{2j}\)\\
    matrix in \(\Rep(\ol{Z}_{r},(n,1))\) & \(A\) & \(\bar{A}\) &
    \(X_{i}\) & \(\bar{X}_{i}\) & 
    \(Y_{j}\) & \(\bar{Y}_{j}\)\\
    matrix in \(\mathcal{C}_{n,r}\) & \(X\) & \(Y\) &
    \(-v_{\bullet 2i-1}\) & \(w_{2i-1 \bullet}\) &
    \(w_{2j \bullet}\) & \(v_{\bullet 2j}\)
  \end{tabular}
\end{center}
Notice that for every \(\alpha=1\dots r\) the arrow \(d_{\alpha}\) is
represented on \(V_{n,r}\) by the column matrix \(v_{\bullet\alpha}\), and the
arrow \(b_{\alpha}\) by the row matrix \(w_{\alpha\bullet}\).

As explained in the introduction, on \(\Rep(\ol{Z}_{r},(n,1))\) there is a
natural action of the group \(\GL_{(n,1)}(\bb{C})\isom \GL_{n}(\bb{C})\); the
corresponding moment map is
\[ J(A,\bar{A},X_{i},\bar{X}_{i},Y_{j},\bar{Y}_{j}) = [A,\bar{A}] + \sum_{i}
X_{i}\bar{X}_{i} - \sum_{j} \bar{Y}_{j}Y_{j}. \]
Under the correspondence \eqref{eq:m1}, this becomes
\[ J(X,Y,v,w) = [X,Y] - \sum_{\alpha=1}^{r} v_{\bullet\alpha} w_{\alpha\bullet}, \]
which is exactly the map \eqref{eq:mom-mu}. We conclude that the symplectic
quotient
\[ J^{-1}(\tau I_{n})/\GL_{n}(\bb{C}) \]
inside \(\Rep(\ol{Z}_{r},(n,1))\) is isomorphic to the manifold
\(\mathcal{C}_{n,r}\), as defined by the quotient \eqref{eq:Cnr}.

We can now explain how the group of symplectomorphisms of the path algebra
\(\bb{C}\Q_{r}\) acts on \(\mathcal{C}_{n,r}\) for each \(n\geq 1\). Given
\(\psi\in \Aut(\bb{C}\Q_{r};c_{r})\), we consider for each arrow \(\xi\) in
\(\Q_{r}\) the non-commutative polynomial \(\psi(\xi)\). Given a point \(p =
(A, \bar{A}, X_{i}, \bar{X}_{i}, Y_{j}, \bar{Y}_{j})\in \Rep(\Q_{r},(n,1))\),
we can evaluate the polynomial \(\psi(\xi)\) at \(p\) (by substituting each
matrix in \(p\) for the arrow it represents) to obtain another matrix
\(\psi(\xi)(p)\). We define our action by declaring that \(\psi\) sends the
point \(p\) to the point
\[ (\psi(a)(p), \psi(a^{*})(p), \psi(x_{i})(p), \psi(x_{i}^{*})(p),
\psi(y_{j})(p), \psi(y_{j}^{*})(p)) \]
in \(\Rep(\Q_{r},(n,1))\). One can easily verify that, when \(\psi\) is
symplectic, this action restricts to an action on each fiber of the moment map
\(J\) which is constant along the orbits of \(\GL_{n}(\bb{C})\). The action on
\(\mathcal{C}_{n,r}\) is just the induced action on the quotient. As this is
naturally a \emph{right} action, we will write the image of the point \(p\)
under the action of \(\psi\) as \(p.\psi\).
\begin{ex}
  Let \(\psi\) denote the triangular automorphism of \(\bb{C}\Q_{3}\) given by
  \(\Lambda(a^{2}b_{21})\). As
  \[ \psi(a,a^{*},x_{1},x_{2},x_{1}^{*},x_{2}^{*},y,y^{*}) = (a, a^{*} +
  ax_{2}y + x_{2}ya, x_{1}, x_{2}, x_{1}^{*}, x_{2}^{*} + ya^{2}, y, y^{*} +
  a^{2}x_{2}) \]
  we have that
  \[ (A,\bar{A},X_{1},X_{2},\bar{X}_{1},\bar{X}_{2},Y,\bar{Y}).\psi = (A,
  \bar{A} + AX_{2}Y + X_{2}YA, X_{1}, X_{2}, \bar{X}_{1}, \bar{X}_{2} +
  YA^{2}, Y, \bar{Y} + A^{2}X_{2}) \]
  or equivalently, in terms of the coordinates on \(V_{n,r}\),
  \[ (X,Y,v,w).\psi = (X, Y - X v\mathtt{e}_{32}w - v\mathtt{e}_{32}w X, 
  v - X^{2}v\mathtt{e}_{32}, w + \mathtt{e}_{32}wX^{2}) \]
  where again we are denoting by \(\mathtt{e}_{\alpha\beta}\) the matrix with
  \(1\) at the position \((\alpha,\beta)\) and \(0\) elsewhere.
\end{ex}
It is not difficult to guess how a reduced affine symplectomorphism acts on
\(\mathcal{C}_{n,r}\). In what follows, we will denote by \(\trasp{m}\) the
transpose of a matrix \(m\).
\begin{lem}
  \label{lem:act-aff}
  Let \(\varphi\in \Aff_{c}^{\red}\) and \(T\) be the corresponding matrix in
  \(\GL_{r}(\bb{C})\). Then \((X,Y,v,w).\varphi = (X, Y, v \trasp{T}^{-1},
  \trasp{T} w)\).
\end{lem}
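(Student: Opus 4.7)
The plan is to reduce the statement to the defining action formula on arrows and then translate back through the correspondence between arrows of $\Q_{r}$ and the matrices in the quadruple $(X,Y,v,w)$. Since $\varphi$ is a reduced affine symplectic automorphism, Lemma \ref{lem:affc-red} implies that $\varphi$ fixes $a$ and $a^{*}$, so by the table preceding the statement the matrices $X$ and $Y$ are preserved by the action on $\mathcal{C}_{n,r}$; this takes care of the first two entries of the quadruple.

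Next I would identify $T$ with $N^{\varphi}\in \GL_{r}(\bb{C}) \subseteq \GL_{r}(\mathcal{A}_{1})$, consistently with the discussion just before the statement, where reduced affine symplectomorphisms are shown to correspond to scalar matrices. Since $\varphi$ is symplectic, it preserves $c_{r}^{2}$ and Theorem \ref{teo:aut-fix-cr2} gives $M^{\varphi} = T^{-1}$. Unwinding the definitions of the crossed morphisms $M$ and $N$, the action of $\varphi$ on the arrows $d_{\alpha}$ and $b_{\alpha}$ is
\[
\varphi(d_{\alpha}) = \sum_{\beta=1}^{r} (T^{-1})_{\alpha\beta} d_{\beta},
\qquad
\varphi(b_{\alpha}) = \sum_{\beta=1}^{r} b_{\beta} T_{\beta\alpha}.
\]

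The final step is a direct evaluation using the correspondences $d_{\alpha}\leftrightarrow v_{\bullet\alpha}$ and $b_{\alpha}\leftrightarrow w_{\alpha\bullet}$ recorded in the table. The column $\alpha$ of the new matrix $v'$ in $p.\varphi$ equals $\sum_{\beta} (T^{-1})_{\alpha\beta} v_{\bullet\beta}$, i.e. $v'_{i\alpha} = \sum_{\beta} v_{i\beta} (T^{-1})_{\alpha\beta}$, which is exactly $(v\,\trasp{T}^{-1})_{i\alpha}$. Similarly the row $\alpha$ of $w'$ equals $\sum_{\beta} T_{\beta\alpha} w_{\beta\bullet}$, giving $w'_{\alpha i} = (\trasp{T}\,w)_{\alpha i}$.

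There is no real obstacle here beyond careful index bookkeeping: one must keep track of the fact that the $\alpha$-th column of $v$ represents $d_{\alpha}$ (pointing right-to-left) while the $\alpha$-th row of $w$ represents $b_{\alpha}$ (pointing left-to-right), and that the crossed morphism $M$ encodes the action on the $d_{\alpha}$ by left-multiplication of coefficients whereas $N$ encodes the action on the $b_{\alpha}$ by right-multiplication. Translating each of these two conventions into matrix multiplication on $v$ and $w$ produces precisely the transposes in the statement.
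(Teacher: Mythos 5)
Your proof is correct and follows essentially the same route as the paper: identify $T=N^{\varphi}$, use Theorem \ref{teo:aut-fix-cr2} to get $M^{\varphi}=T^{-1}$, write out $\varphi(d_{\alpha})$ and $\varphi(b_{\alpha})$ via the crossed morphisms, and translate through the correspondence $d_{\alpha}\leftrightarrow v_{\bullet\alpha}$, $b_{\alpha}\leftrightarrow w_{\alpha\bullet}$. Your version merely makes explicit two points the paper leaves implicit (that $X$ and $Y$ are untouched because $\varphi$ fixes $a$ and $a^{*}$, and the index bookkeeping behind the ``simple computation''), and the index checks are accurate.
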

\begin{proof}
  By definition, \(T\deq N^{\varphi}\) is the only matrix such that
  \begin{equation}
    \label{eq:17}
    \varphi(b_{\alpha}) = b_{1} T_{1\alpha} + \dots + b_{r} T_{r\alpha}.
  \end{equation}
  As the arrow \(b_{\alpha}\) is represented on \(V_{n,r}\) by the row matrix
  \(w_{\alpha\bullet}\), a simple computation shows that the row matrix
  representing \(\varphi(b_{\alpha})\), as expressed by \eqref{eq:17},
  coincides with the \(\alpha\)-th row of the matrix \(\trasp{T}w\).
  Similarly, \(T^{-1} = M^{\varphi}\) is the only matrix such that
  \begin{equation}
    \label{eq:18}
    \varphi(d_{\alpha}) = (T^{-1})_{\alpha 1} d_{1} + \dots + (T^{-1})_{\alpha
      r} d_{r}.
  \end{equation}
  As the arrow \(d_{\alpha}\) is represented on \(V_{n,r}\) by the column
  matrix \(v_{\bullet\alpha}\), it is again easy to verify that the column
  matrix representing \(\varphi(d_{\alpha})\), as expressed by \eqref{eq:18},
  coincides with the \(\alpha\)-th column of the matrix \(v\trasp{T}^{-1}\).
\end{proof}
It would be tempting to conjecture, by analogy with the result proved in
\cite{bp08}, that the group \(\TAut(\bb{C}\Q_{r};c_{r})\) acts transitively on
(the disjoint union over \(n\) of) the manifolds \(\mathcal{C}_{n,r}\) for
every \(r\). However, it does not seem feasible to generalize the proof given
in \cite{bp08} for \(r=2\) to the new setting. Instead, following \cite{mt13},
we will study this problem on the ``large'' open subset
\(\mathcal{R}_{n,r}\subset \mathcal{C}_{n,r}\) which was defined in the
introduction (cf. \eqref{eq:2}) as the set of points \([X,Y,v,w]\in
\mathcal{C}_{n,r}\) such that at least one of the matrices \(X\) and \(Y\) is
\emph{regular semisimple}, i.e. diagonalizable with distinct eigenvalues. This
has the advantage of allowing explicit computations. Our result is that, as in
the case \(r=2\), to connect each pair of points in \(\mathcal{R}_{n,r}\) it
suffices to take the (much smaller) subgroup \(\mathcal{P}_{r}\) inside
\(\TAut(\bb{C}\Q_{r};c_{r})\).
\begin{teo}
  \label{teo:tr-act}
  For every pair of points \(p,p'\in \mathcal{R}_{n,r}\) there exists
  \(\psi\in \mathcal{P}_{r}\) such that \(p.\psi = p'\).
\end{teo}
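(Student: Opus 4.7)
The plan is to reduce the transitivity question on $\mathcal{R}_{n,r}$ to the classical Calogero--Moser setting, where it is provided by the Berest--Wilson theorem. To begin, the Fourier-type automorphism $\mathcal{F}_0 \in \Aut(\bb{C}\Q_0;c_0) \subset \mathcal{P}_r$ given by $a \mapsto -a^{*}$, $a^{*} \mapsto a$ (and extended trivially on the other arrows) acts on $\mathcal{C}_{n,r}$ by $(X,Y,v,w) \mapsto (-Y,X,v,w)$ and hence interchanges $\mathcal{C}_{n,r}'$ with $\mathcal{C}_{n,r}''$. Replacing $p$ or $p'$ by its image under $\mathcal{F}_0$ if necessary, we may assume both lie in $\mathcal{C}_{n,r}'$, and using the $\GL_{n}$-gauge absorbed in the quotient we choose representatives with $X = \diag(x_1,\ldots,x_n)$ and $X' = \diag(x_1',\ldots,x_n')$ regular semisimple.

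The first main step is to standardize the $(v,w)$-data by means of $P_r \cong \GL_r(\bb{C}[a])$. Fix the model pair $(\xi_0,\eta_0) \deq (\trasp{e_1},-\tau e_1)\in\mathcal{V}_r$. Since $\GL_r(\bb{C})$ acts transitively on the set of $(\xi,\eta)\in\Mat_{1,r}\times\Mat_{r,1}$ with $\xi\eta=-\tau$, and the stabilizer of $(\xi_0,\eta_0)$ contains matrices of every nonzero determinant, we choose $T_i\in\GL_r(\bb{C})$ with $v_{i\bullet}T_i^{-1}=\xi_0$, $T_iw_{\bullet i}=\eta_0$ and all $\det T_i$ equal. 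Writing $T_i=T_1 S_i$ with $S_i\in\SL_r(\bb{C})$, we construct $B\in\SL_r(\bb{C}[a])$ with $B(x_i)=S_i$ for every $i$ by a Lagrange-type lift: decompose each $S_i$ into a product of elementary matrices $I+c_{i,k}\mathtt{e}_{\alpha_{i,k}\beta_{i,k}}$, let $q_i(a)\in\bb{C}[a]$ be the interpolator with $q_i(x_j)=\delta_{ij}$, and set
\[
B_i \deq \prod_k \bigl(I + c_{i,k}\, q_i(a)\, \mathtt{e}_{\alpha_{i,k}\beta_{i,k}}\bigr) \in \SL_r(\bb{C}[a]);
\]
then $B_i(x_i)=S_i$ and $B_i(x_j)=I$ for $j\neq i$, so $B \deq B_1 B_2\cdots B_n$ has the desired evaluations. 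Setting $A \deq \trasp{(T_1 B)}$ yields an element of $\GL_r(\bb{C}[a])$ with $\trasp{A}(x_i)=T_i$ for all $i$, and by theorem \ref{teo:Np-iso} the corresponding $\Psi(A)\in P_r\subset\mathcal{P}_r$ sends $(v_{i\bullet},w_{\bullet i})$ to $(\xi_0,\eta_0)$ at every $i$ while leaving $X$ fixed. Applying this construction to both $p$ and $p'$ reduces us to the case in which they share the same standardized data $(v_0,w_0)$, where $v_0$ is the $n\times r$ matrix with first column $\mathbf{1}$ (and the remaining columns zero) and $w_0$ is the $r\times n$ matrix with first row $-\tau\trasp{\mathbf{1}}$ (and the remaining rows zero).

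At this stage the moment-map equation reads $[X,Y]=\tau I-\tau J$, with $J$ the all-ones $n\times n$ matrix --- precisely the Calogero--Moser relation for the vector-covector pair $(\mathbf{1},-\tau\trasp{\mathbf{1}})$. The $\GL_n$-equivariant padding map $(X,Y,v,w)\mapsto(X,Y,(v,0,\ldots,0),(w;0;\ldots;0))$ realizes an injection $\mathcal{C}_n=\mathcal{C}_{n,1}\hookrightarrow\mathcal{C}_{n,r}$ whose image is exactly the standardized subset; since automorphisms in $\Aut(\bb{C}\Q_0;c_0)$ fix every arrow other than $a,a^{*}$, they preserve this subset and their restriction coincides with the natural action of the first Weyl algebra's automorphism group on $\mathcal{C}_n$. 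The Berest--Wilson transitivity theorem therefore supplies $\psi_0\in\Aut(\bb{C}\Q_0;c_0)$ carrying the standardized $p$ to the standardized $p'$, and $\Psi(A)\circ\psi_0\circ\Psi(A')^{-1}$ --- composed appropriately with $\mathcal{F}_0$ when Step 1 was invoked --- is the required element of $\mathcal{P}_r$. The principal technical obstacle is the polynomial lifting above: realizing an arbitrary family of prescribed values in $\GL_r(\bb{C})$ (with common determinant) as the evaluations of a single element of $\GL_r(\bb{C}[a])$. The Lagrange construction reduces this to the generation of $\SL_r(\bb{C}[a])$ by elementary transvections, which is classical over the Euclidean ring $\bb{C}[a]$.
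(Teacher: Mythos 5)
Your proposal is correct, but it takes a genuinely different route from the paper. The paper argues by induction on \(r\): using one reduced affine symplectomorphism, one element of \(\STl_{c}\) (via Lagrange interpolation), a conjugation by \(\mathcal{F}_{0}\) and one element of \(\oSTl_{c}\), it kills the last column of \(v\) and the last row of \(w\), landing in \(\mathcal{N}_{n,r}\isom \mathcal{R}_{n,r-1}\), and descends step by step to \(r=1\) where \cite{bw00} applies. You instead exploit the full isomorphism \(P_{r}\isom\GL_{r}(\bb{C}[a])\) of theorem \ref{teo:Np-iso} in one shot: since \(X\) is regular semisimple and each pair \((v_{i\bullet},w_{\bullet i})\) pairs to \(-\tau\neq 0\), you interpolate a whole family of matrices \(T_{i}\in\GL_{r}(\bb{C})\) at the eigenvalues of \(X\) by a single \(A\in\GL_{r}(\bb{C}[a])\), standardizing the internal data of all \(n\) sites simultaneously and reducing directly to the padded copy of \(\mathcal{C}_{n,1}\). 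Your route is arguably more conceptual — it makes clear that the whole point of \(P_{r}\isom\GL_{r}(\bb{C}[a])\) is to act ``site-wise'' by arbitrary prescribed elements of \(\GL_{r}(\bb{C})\) over a regular semisimple \(X\) — and it needs \(\mathcal{F}_{0}\) only once at the outset rather than at every inductive step; the paper's route needs only the explicit elementary generators and no lifting lemma. Two small points deserve attention. First, your determinant normalization via the stabilizer of \((\xi_{0},\eta_{0})\) requires \(r\geq 2\); for \(r=1\) the stabilizer is trivial, but then the theorem is immediate from \cite{bw00} after using the residual torus \eqref{eq:act-vw} to normalize \((v_{i},w_{i})\), so you should dispose of that case separately (alternatively, rescaling \(T_{i}\mapsto\lambda_{i}T_{i}\), which changes the pair only within its gauge class, adjusts \(\det T_{i}\) by \(\lambda_{i}^{r}\) and works uniformly). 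Second, the assertion that \(\Psi(A)\) acts at site \(i\) by \((v_{i\bullet},w_{\bullet i})\mapsto(v_{i\bullet}\trasp{A(x_{i})}^{-1},\trasp{A(x_{i})}w_{\bullet i})\) is the polynomial-matrix generalization of lemma \ref{lem:act-aff}; it does follow from \(B\mapsto BA\), \(D\mapsto A^{-1}D\) together with the diagonality of \(X\) (evaluation at \(X\) is a homomorphism \(\bb{C}[a]\to\bb{C}[X]\) and acts entrywise at each \(x_{i}\)), but it is the one computation you should spell out, since everything else hinges on it.
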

\begin{proof}
  The basic strategy is the same as the one used in \cite[Lemma 8.4]{bp08} and
  \cite{mt13}, i.e. a reduction to the case \(r=1\). Inside
  \(\mathcal{R}_{n,r}\) there is the submanifold
  \[ \mathcal{N}_{n,r}\deq \set{[X,Y,v,w]\in \mathcal{R}_{n,r} | v_{\bullet r}
    = 0 \text{ and } w_{r\bullet} = 0} \]
  which is isomorphic to \(\mathcal{R}_{n,r-1}\). Now suppose that, for every
  \(r>1\), we are able to move every point \(p\in \mathcal{R}_{n,r}\) to
  \(\mathcal{N}_{n,r}\) using a symplectomorphism in \(\mathcal{P}_{r}\).
  Notice that \(\mathcal{P}_{r-1}\) naturally embeds into \(\mathcal{P}_{r}\)
  as the subgroup of automorphisms fixing the two arrows in \(\bb{C}\Q_{r}\)
  that do not appear in \(\bb{C}\Q_{r-1}\). Then a straightforward induction
  on \(r\) shows that every point in \(\mathcal{R}_{n,r}\) can be moved inside
  (an isomorphic copy of) \(\mathcal{R}_{n,1}\). As the subgroup
  \(\Aut(\bb{C}\Q_{0};c_{0})\subseteq \mathcal{P}_{r}\) acts transitively on
  \(\mathcal{C}_{n,1}\supseteq \mathcal{R}_{n,1}\) \cite{bw00}, the theorem
  follows.

  So let us take \(r>1\), \(p\in \mathcal{R}_{n,r}\) and let \((X,Y,v,w)\) be
  a representative of the point \(p\). We can assume \(p\) to be in
  \(\mathcal{C}_{n,r}'\) (otherwise we only need to act with
  \(\mathcal{F}_{0}\), which exchanges \(\mathcal{C}_{n,r}'\) and
  \(\mathcal{C}_{n,r}''\)) so that \(X = \diag(\lambda_{1}, \dots,
  \lambda_{n})\) with \(\lambda_{i}\neq \lambda_{j}\) for \(i\neq j\). Also,
  recall from the introduction that for a point \(p\in \mathcal{C}_{n,r}'\) we
  have that \(v_{k\bullet}w_{\bullet k} = -\tau\) for every \(k=1\dots n\). As
  \(\tau\neq 0\), this implies that each row of the matrix \(v\) and each
  column of the matrix \(w\) have at least a nonzero entry. Clearly, this
  result also holds when \(p\in \mathcal{C}_{n,r}''\).

  Let us suppose that \(r\) is odd, say \(r = 2s + 1\). Then the last column
  of \(v\) represents the arrow \(x_{s+1}\) and the last row of \(w\)
  represents the arrow \(x_{s+1}^{*}\). As each column of \(w\) has a nonzero
  entry, there exists a matrix \(T\in \GL_{r}(\bb{C})\) such that each entry
  in the \(2s\)-th row of \(Tw\) is nonzero. The matrix \(T\) corresponds, via
  lemma \ref{lem:act-aff}, to some reduced affine symplectomorphism
  \(\varphi\); acting with it, we arrive at a point \((X,Y,v,w')\) such that
  \(X = \diag(\lambda_{1}, \dots, \lambda_{n})\) and \(w'_{2s,k}\neq 0\) for
  every \(k=1\dots n\). We claim that there exists a unique polynomial \(p\)
  of degree \(n-1\) such that
  \begin{equation}
    \label{eq:19}
    w_{2s+1,\bullet} + w_{2s,\bullet} p(X) = 0.
  \end{equation}
  Indeed, as \(X\) is diagonal and the row vector \(w_{2s,\bullet}\) has
  nonzero entries, to solve equation \eqref{eq:19} means to find a polynomial
  whose value at \(\lambda_{k}\) is given by \(-w_{2s+1,k}/w_{2s,k}\) for
  every \(k=1\dots n\), and it is well known that such an interpolation
  problem always has a unique solution of the stated degree. Then, acting with
  the triangular symplectomorphism
  \[ \Lambda(p(a)b_{s+1,s}) = 
  \begin{cases}
    a^{*}\mapsto a^{*} + \frac{\partial }{\partial a} (p(a)x_{s+1}y_{s})\\
    x_{s+1}^{*}\mapsto x_{s+1}^{*} + y_{s}p(a)\\
    y_{s}^{*}\mapsto y_{s}^{*} + p(a)x_{s+1}
  \end{cases} \]
  we get to a point \((X,Y',v',w'')\) such that \(w_{2s+1,\bullet} = 0\). Now
  we exchange \(X\) and \(Y'\) using \(\mathcal{F}_{0}\) and we repeat the
  same algorithm to kill the last row of \(v'\). Namely, as each row of \(v\)
  has a nonzero entry there exists a matrix \(T\in \GL_{r}(\bb{C})\) such that
  every entry in the \(2s\)-th column of \(vT\) is nonzero. Let us act with
  the affine symplectic automorphism corresponding to \(T\); then we are at a
  point \((Y',-X,v'',w'')\) such that \(v''_{k,2s}\neq 0\) for every
  \(k=1\dots n\). Again, this implies that there exists a unique polynomial
  \(q\) of degree \(n-1\) such that
  \[ -v_{\bullet,2s+1} + q(Y) v_{\bullet,2s} = 0, \]
  and acting with the op-triangular symplectomorphisms
  \[ \Lambda'(q(a^{*})b_{s+1,s}^{*}) =
  \begin{cases}
    a\mapsto a + \frac{\partial }{\partial
      a^{*}}(q(a^{*})y_{s}^{*}x_{s+1}^{*})\\
    x_{s+1}\mapsto x_{s+1} + q(a^{*}) y_{s}^{*}\\
    y_{s}\mapsto y_{s} + x_{s+1}^{*} q(a^{*})
  \end{cases} \]
  (which fixes \(x_{s+1}^{*}\), hence the last row of \(w\)) we finally arrive
  at a point in \(\mathcal{N}_{n,r}\), as we wanted.

  When \(r\) is even, say \(r = 2s\), the proof proceeds along completely
  analogous lines. In this case the last column of \(v\) represents the arrow
  \(y^{*}_{s}\) and the last row of \(w\) represents the arrow \(y_{s}\). The
  reduction from \(\mathcal{R}_{n,r}\) to \(\mathcal{N}_{n,r}\) is again
  accomplished in two steps. In the first we make every entry of
  \(v_{\bullet,2s}\) nonzero by acting with a suitable element of
  \(\Aff_{c}^{\red}\), and then act with
  \[ \Lambda(p(a)b_{ss}) =
  \begin{cases}
    a^{*}\mapsto a^{*} + \frac{\partial }{\partial a}(p(a)x_{s}y_{s})\\
    x_{s}^{*}\mapsto x_{s}^{*} + y_{s}p(a)\\
    y_{s}^{*}\mapsto y_{s}^{*} + p(a)x_{s}
  \end{cases} \]
  where the polynomial \(p\) of degree \(n-1\) is determined by solving the
  interpolation problem posed by the system
  \[ v_{\bullet,2s} - p(X)v_{\bullet,2s-1} = 0. \]
  With this step we reach a point \((X,Y',v',w')\) such that
  \(v'_{\bullet,2s}=0\). In the second step we use \(\mathcal{F}_{0}\) to get
  into \(\mathcal{C}_{n,r}''\), make all the entries of \(w_{2s-1,\bullet}\)
  different from zero in the usual manner, determine the polynomial \(q\) of
  degree \(n-1\) such that
  \[ w_{2s,\bullet} + w_{2s-1,\bullet}q(Y) = 0, \]
  and act with
  \[ \Lambda'(q(a^{*})b_{ss}^{*}) =
  \begin{cases}
    a\mapsto a + \frac{\partial }{\partial
      a^{*}}(q(a^{*})y_{s}^{*}x_{s}^{*})\\
    x_{s}\mapsto x_{s} + q(a^{*}) y_{s}^{*}\\
    y_{s}\mapsto y_{s} + x_{s}^{*} q(a^{*})
  \end{cases} \]
  to finally land into \(\mathcal{N}_{n,r}\).
\end{proof}
\begin{rem}
  As in \cite{mt13}, we should emphasize that the subset \(\mathcal{R}_{n,r}\)
  is \emph{not} closed under the action of \(\mathcal{P}_{r}\). This is clear
  from the fact that the subgroup \(\Aut(\bb{C}\Q_{0};c_{0})\) can send a
  point in \(\mathcal{R}_{n,1}\) to a point such that both \(X\) and \(Y\) are
  nilpotent (cf. \cite{bw00}).
\end{rem}
Before concluding this section, let us briefly explain the relation between
the group \(\mathcal{P}_{r}\) and the flows determined by the Hamiltonians
\eqref{eq:20} of the Gibbons-Hermsen hierarchy. First, notice that
\[ J_{k,I} = \tr Y^{k}vw = \tr Y^{k}([X,Y] - \tau I) = -\tau \tr Y^{k}, \]
so that the flow of a Hamiltonian of this form coincides with the action of
the op-triangular symplectomorphism \(\Lambda'(-\tau a^{*k})\).

Consider now the functions \(J_{k,m}\) for \(m = \mathtt{e}_{\alpha\beta}\).
When \(\alpha=\beta\) the corresponding flow is not polynomial, hence it
cannot be realized as the action of an element of \(\mathcal{P}_{r}\). When
\(\alpha\neq\beta\), the flow of \(J_{k,\mathtt{e}_{\alpha\beta}}\) is given
by
\[ \begin{aligned}
  X(t) &= X + t \sum_{i=1}^{k} Y^{k-i} v_{\bullet\alpha} w_{\beta\bullet} Y^{i-1}\\
  v_{\bullet\beta}(t) &= v_{\bullet \beta} - t Y^{k} v_{\bullet \alpha}\\
  w_{\alpha\bullet}(t) &= w_{\alpha\bullet} + t w_{\beta\bullet} Y^{k}.
\end{aligned} \]
In particular, when \(\alpha\) is even and \(\beta\) is odd it coincides with
the action of the op-triangular automorphism
\[ \begin{cases}
  a\mapsto a + t \frac{\partial }{\partial a^{*}} (a^{*k} y_{\alpha/2}^{*}
  x_{(\beta+1)/2}^{*})\\
  x_{(\beta+1)/2}\mapsto x_{(\beta+1)/2} + t a^{*k} y_{\alpha/2}^{*}\\
  y_{\alpha/2}\mapsto y_{\alpha/2} + t x_{(\beta+1)/2}^{*} a^{*k}
\end{cases} = \Lambda'(t a^{*k} b_{\frac{\beta+1}{2},\frac{\alpha}{2}}^{*}). \]
For different parities of \(\alpha\) and \(\beta\) the corresponding
automorphism is no longer op-triangular, but it still belongs to
\(\mathcal{P}_{r}\). Indeed, as \(\alpha\neq \beta\) we can always find a
permutation matrix \(P\) such that \(\mathtt{e}_{\alpha\beta} =
P^{-1}\mathtt{e}_{21}P\), so that
\[ J_{k,\mathtt{e}_{\alpha\beta}} = \tr Y^{k}vP^{-1}\mathtt{e}_{21}Pw = \tr
Y^{k}\tilde{v}\mathtt{e}_{21}\tilde{w} \]
with \(\tilde{v} = vP^{-1}\) and \(\tilde{w} = Pw\). By lemma
\ref{lem:act-aff}, the transformation \((v,w)\mapsto (\tilde{v},\tilde{w})\)
corresponds to the action of the unique reduced affine symplectomorphism
\(\varphi\) such that \(N^{\varphi} = \trasp{P}\).

Completely analogous results hold for the flow of Hamiltonian functions of the
form \(\tr X^{k}\) or \(\tr X^{k} v\mathtt{e}_{\alpha\beta}w\) (again with
\(\alpha\neq \beta\)). In particular when \(\alpha\) is odd and \(\beta\) is
even such a flow will coincide with the action of the triangular automorphism
\(\Lambda(t a^{k} b_{\frac{\alpha+1}{2},\frac{\beta}{2}})\).

\section{Some examples}
\label{s:ex}

Let us illustrate some of the results obtained above for the first few values
of \(r\). The case \(r=1\) is somewhat degenerate, but we will treat it for
completeness. The first zigzag quiver is
\[ Z_{1} = \xymatrix{
  1\ar@(ul,dl)[]_{a} & 2 \ar[l]_{x}}. \]
As \(q_{1} = 1\cdot 0 = 0\), triangular symplectomorphisms of \(\bb{C}\Q_{1}\)
are labeled by an element of \(\ol{\DR}^{0}(\bb{C}[a])\), i.e. a polynomial
without constant term. The only reduced symplectomorphisms are the affine
ones, acting as \((x,x^{*})\mapsto (\lambda x,\lambda^{-1}x^{*})\) for some
\(\lambda\in \bb{C}^{*}\); this action is trivial on representation spaces, so
that the group acting on \(\mathcal{C}_{n,1}\) effectively reduces to
\(\Aut(\bb{C}\Q_{0};c_{0})\) and the action is the same as the one first
defined by Berest and Wilson in \cite{bw00}. All this is well known, and
follows from the fact (mentioned in the introduction) that the rank 1
Gibbons-Hermsen system coincides with the rational Calogero-Moser system. The
action of the op-triangular automorphism
\[ (a,a^{*},x)\mapsto (a + p'(a^{*}),a^{*},x) \]
determined by a polynomial \(p\in \ol{\DR}^{0}(\bb{C}[a^{*}])\) corresponds to
a finite linear combination of Calogero-Moser flows.

Let us proceed to the more interesting case \(r=2\). The quiver
\[ Z_{2} = \xymatrix{
  1 \ar@(ul,dl)[]_{a} \ar@/_/[r]_{y} & 2 \ar@/_/[l]_{x}} \]
coincides with the one studied in \cite{bp08} and \cite{mt13}. As \(q_{2} =
1\), triangular symplectomorphisms of \(\bb{C}\Q_{2}\) are labeled by a
necklace word in the two generators \(a\) and \(b\deq b_{11} = xy\). One can
easily verify that the isomorphism \(P_{2}\isom \GL_{2}(\bb{C}[a])\) described
in section \ref{s:defP} identifies the lower unitriangular matrix \(\left(
  \begin{smallmatrix}
    1 & 0\\
    p(a) & 1
  \end{smallmatrix}
\right)\) with the triangular automorphism \(\Lambda(p(a)b)\) acting as
\[ (a^{*},x^{*},y^{*})\mapsto (a^{*} + \frac{\partial }{\partial a} (p(a)xy),
x^{*} + yp(a), y^{*} + p(a)x), \]
and the upper unitriangular matrix \(\left(
  \begin{smallmatrix}
    1 & p(a)\\
    0 & 1
  \end{smallmatrix}
\right)\) with the automorphism
\[ (a^{*},x,y)\mapsto (a^{*} + \frac{\partial}{\partial a} (p(a)xy), x +
p(a)y^{*}, y + x^{*}p(a)). \]
Similarly, the isomorphism \(P'_{2}\isom \GL_{2}(\bb{C}[a^{*}])\) identifies
the upper unitriangular matrix \(\left(
  \begin{smallmatrix}
    1 & p(a^{*})\\
    0 & 1
  \end{smallmatrix}
\right)\) with the op-triangular automorphism \(\Lambda'(p(a^{*})b^{*})\), and
the lower unitriangular matrix \(\left(
  \begin{smallmatrix}
    1 & 0\\
    p(a^{*}) & 1
  \end{smallmatrix}
\right)\) with the automorphism
\[ (a,x^{*},y^{*})\mapsto (a + \frac{\partial}{\partial a^{*}}
(p(a^{*})y^{*}x^{*}), x^{*} + yp(a^{*}), y^{*} + p(a^{*})x). \]
The map \(\app{k}{\GL_{2}(\bb{C}[a^{*}])}{\TAut(\bb{C}\Q_{2};c_{2})}\) defined
in the main proof of \cite{mt13} is essentially the inverse of
\(\rist{M}{P_{2}'}\); in particular it is injective (as it was conjectured
there). The qualifier ``essentially'' in the previous sentence is due to the
unfortunate choice we made of inverting the map \(M\) (which is naturally an
\emph{anti}-morphism of groups, at least using the standard ordering for the
composition of maps) instead of the map \(N\). As a consequence, the matrices
used in \cite{mt13} are actually the \emph{transposes} of the ones used here.
The notation adopted in the present paper is (hopefully) more consistent.

It is also easy to check that the group \(\mathcal{P}\) defined in \cite{mt13}
coincides (apart from the quotienting out of the subgroup of scalar affine
symplectomorphisms) with \(\mathcal{P}_{r}\) for \(r=2\). Indeed, in
\cite{mt13} we denoted by \(\mathcal{P}\) the subgroup of
\(\TAut(\bb{C}\Q_{2};c_{2})\) generated by the reduced affine
symplectomorphisms, the op-triangular symplectomorphisms of the form
\(\Lambda'(p(a^{*}))\) and \(\Lambda'(p(a^{*})b^{*})\) for some polynomial
\(p\), and the single affine symplectomorphism \(\mathcal{F}_{2}\) defined
as in \eqref{eq:def-fft}. All these automorphisms belong to
\(\mathcal{P}_{2}\), as it is immediate to verify. Vice versa, every element
of \(\mathcal{P}_{2}\) can be written as \(\psi_{0}\circ \psi\) with
\(\psi_{0}\in \Aut(\bb{C}\Q_{0};c_{0})\) and \(\psi\in P_{2}'\); both of these
groups are contained in \(\mathcal{P}\), as defined above.
\begin{rem}
  In \cite{mt13} we were actually interested in embedding into
  \(\TAut(\bb{C}\Q_{2};c_{2})\) the larger group \(\Gamma^{\alg}(2)\), where
  \[ \Gamma^{\alg}(r)\deq \{e^{p}I_{r}\}_{p\in z\bb{C}[z]} \times
  \PGL_{r}(\bb{C}[z]) \]
  is the group introduced by Wilson in \cite{wils09} (see \cite{mt13} for its
  precise definition). From the results proved in Section \ref{s:defP} it
  follows that also for every \(r>2\) the group \(\mathcal{P}_{r}\) contains
  (roughly speaking) ``two copies'' of \(\Gamma^{\alg}(r)\), one for \(z=a\)
  and the other for \(z=a^{*}\). Clearly, this fact has some bearing on the
  general picture described in \cite[Section 4]{mt13}. However, we will not
  explore this connection further in this paper.
\end{rem}
Finally let us consider the first ``new'' case, \(r=3\). The third zigzag
quiver is
\[ Z_{3} = \xymatrix{
  1 \ar@(ul,dl)[]_{a} \ar[r]|{y} & 2 \ar@/_/[l]_{x_{1}}
  \ar@/^/[l]^{x_{2}}}. \]
As \(q_{3}=2\), triangular symplectomorphisms of \(\bb{C}\Q_{3}\) are
indexed by a necklace word \(f\) in the linear space
\(\ol{\DR}^{0}(\falg{a,b_{11},b_{21}})\), where \(b_{11} = x_{1}y\) and
\(b_{21} = x_{2}y\). The matrix in \(\GL_{3}(\mathcal{A}_{1})\) corresponding
to \(\Lambda(f)\) is
\[ N^{\Lambda(f)} =
\begin{pmatrix}
  1 & 0 & 0\\
  \frac{\partial f}{\partial b_{11}} & 1 & \frac{\partial f}{\partial b_{21}}\\
  0 & 0 & 1
\end{pmatrix}. \]
When restricted to \(P_{3}\subseteq \TAut(\bb{C}\Q_{3};c_{3})\), the map \(N\)
becomes an isomorphism with \(\GL_{3}(\bb{C}[a])\). For instance one can
compute that the generic upper unitriangular matrix
\[
\begin{pmatrix}
  1 & p_{12}(a) & p_{13}(a)\\
  0 & 1 & p_{23}(a)\\
  0 & 0 & 1
\end{pmatrix}
\]
in \(\GL_{3}(\bb{C}[a])\) corresponds to the symplectomorphism of
\(\bb{C}\Q_{3}\) defined by
\[
\begin{cases}
  a^{*}\mapsto a^{*} + \frac{\partial }{\partial a}(p_{23}(a)x_{2}y) -
  \frac{\partial }{\partial a}(p_{12}(a)y^{*}x_{1}^{*}) - p_{23}(a)
  \frac{\partial }{\partial a}(p_{12}(a)x_{2}x_{1}^{*}) + \frac{\partial
  }{\partial a}(p_{13}(a)x_{2}x_{1}^{*})\\
  y\mapsto y + x_{1}^{*}p_{12}(a)\\
  x_{2}^{*}\mapsto x_{2}^{*} + yp_{23}(a) + x_{1}^{*}p_{13}(a)\\
  x_{1}\mapsto x_{1} + p_{12}(a)y^{*} + p_{12}(a)p_{23}(a)x_{2} -
  p_{13}(a)x_{2}\\
  y^{*}\mapsto y^{*} + p_{23}(a)x_{2}
\end{cases}
\]
One also has similar expressions for automorphisms corresponding to matrices
in \(\GL_{3}(\bb{C}[a^{*}])\).

\appendix
\section{Proof of theorem \ref{teo:sol-char-eq}}
\label{s:app}

Let \(G\) stand for the subset \(\{g_{0}, \dots, g_{n}\}\) of generators of
\(\mathcal{A}_{1}\) which appear in equation \eqref{eq:char-eq}. We start by
proving some general results about an \((n+1)\)-tuple \(\vec{u} = (u_{0},
\dots, u_{n})\) of elements of \(\mathcal{A}_{1}\) that satisfies equation
\eqref{eq:char-eq}.
\begin{lem}
  \label{lem:words-G}
  For each \(i=0\dots n\), each word of nonzero length in the support%
  \footnote{Recall that the \emph{support} of a non-commutative polynomial
    \(w\in \falg{G}\), denoted \(\supp w\), is simply the set of monomials (or
    words) in \(G\) which appear in \(w\).} of \(u_{i}\) begins and ends with
  a letter in \(G\).
\end{lem}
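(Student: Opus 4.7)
The plan is to exploit freeness of $\mathcal{A}_{1}$ and do a direct coefficient-wise bookkeeping of the identity $\sum_{k=0}^{n}[g_{k},u_{k}]=0$. Write each $u_{k}$ on the monomial basis of the free algebra, $u_{k}=\sum_{w}c_{k,w}\,w$, where $w$ ranges over words in the generators and $c_{k,w}\in\bb{C}$. Since a word forms a basis of $\mathcal{A}_{1}$, the equation is equivalent to the vanishing of the coefficient of every monomial $M$ in the expansion of $\sum_{k}(g_{k}u_{k}-u_{k}g_{k})$.

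First I would analyse which terms can contribute to the coefficient of a fixed monomial $M=h_{1}h_{2}\dots h_{t}$ of length $t\geq 1$. Because $\mathcal{A}_{1}$ is free, $g_{k}w=M$ forces $h_{1}=g_{k}$ and $w=h_{2}\dots h_{t}$, and $w g_{k}=M$ forces $h_{t}=g_{k}$ and $w=h_{1}\dots h_{t-1}$. So exactly one $k$ on each side can contribute, and only when $h_{1}$, respectively $h_{t}$, lies in $G$.

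Next I would specialise to the two mixed cases. Suppose $h_{1}=g_{k_{1}}\in G$ but $h_{t}\notin G$; then the coefficient equation for $M$ reduces to $c_{k_{1},\,h_{2}\dots h_{t}}=0$. Letting $k_{1}$ and the tail $h_{2}\dots h_{t}$ vary (with the sole constraint that its last letter is outside $G$), this says no word of length $\geq 1$ ending in a letter outside $G$ can occur in any $u_{k_{1}}$. Symmetrically, the case $h_{1}\notin G$, $h_{t}=g_{k_{2}}\in G$ gives $c_{k_{2},\,h_{1}\dots h_{t-1}}=0$, so no word of length $\geq 1$ starting in a letter outside $G$ can occur in any $u_{k_{2}}$. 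Combining the two conclusions yields the lemma, and the short length-$1$ case is covered automatically by taking $M=g_{i}h$ in the argument above.

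There is no real obstacle here; the only thing to be careful about is the bookkeeping that each relevant monomial $M$ really is attained by varying the tail (or head) freely, which is immediate because $G\subsetneq\{a,a^{*},(e_{\alpha\beta})\}$ is merely a subset of free generators and the complementary letters are available as the ``forbidden'' first or last letter.
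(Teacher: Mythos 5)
Your proof is correct and follows essentially the same route as the paper's: both rest on the observation that, by freeness of $\mathcal{A}_{1}$, the coefficient of a fixed monomial $M$ in $\sum_{k}[g_{k},u_{k}]$ receives at most one contribution determined by the first letter of $M$ and at most one determined by its last letter, so a monomial with exactly one end in $G$ forces a coefficient of some $u_{k}$ to vanish. The paper phrases this as a cancellation/contradiction argument for a single offending word, whereas you organize it as a direct coefficient-by-coefficient computation, but the content is identical.
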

\begin{proof}
  Without loss of generality, let us take \(i=0\) and write \(u_{0} = \ell m +
  \chi\), where \(\ell\) is a generator, \(m\) is a word (possibly of length
  zero) and \(\chi\in \mathcal{A}_{1}\) does not contain \(-\ell m\). Equation
  \eqref{eq:char-eq} becomes
  \begin{equation}
    \label{eq:23}
    g_{0}\ell m + g_{0}\chi - \ell m g_{0} - \chi g_{0} + g_{1}u_{1} -
    u_{1}g_{1} + \dots + g_{n}u_{n} - u_{n}g_{n} = 0.
  \end{equation}
  Suppose \(\ell\notin G\). Then the term \(-\ell m g_{0}\) cannot be canceled
  by the first two terms (because \(\ell\neq g_{0}\)), nor by the fourth (by
  the hypothesis on \(\chi\)), nor by a term \(g_{i}u_{i}\) (because
  \(\ell\neq g_{i}\)), nor by a term \(-u_{i}g_{i}\) (because \(g_{i}\neq
  g_{0}\) for every \(i\neq 0\)). This contradicts equation \eqref{eq:23},
  hence it must be that \(\ell\in G\). Similarly, writing \(u_{0} = m\ell +
  \chi\) the same reasoning implies that \(\ell\in G\).
\end{proof}
\begin{lem}
  \label{lem:cyc-ii}
  For each \(i=0\dots n\), \(u_{i}\) contains a word of the form \(mg_{i}\) if
  and only if it contains a word of the form \(g_{i}m\).
\end{lem}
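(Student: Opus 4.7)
The plan is to iterate the cancellation argument already used in the proof of Lemma~\ref{lem:words-G}, tracing a single monomial through the chain of identifications forced by \eqref{eq:char-eq}.

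I would first address the direction \emph{``ends with $g_{i}$ implies starts with $g_{i}$''}. Suppose $w_{0}=mg_{i}\in\supp(u_{i})$. If $m$ is empty then $u_{i}$ contains $g_{i}$ itself, which trivially also starts with $g_{i}$, so assume $m=\ell_{1}\cdots\ell_{p}$ with $p\geq 1$. By Lemma~\ref{lem:words-G} the first letter $\ell_{1}$ lies in $G=\{g_{0},\dots,g_{n}\}$, say $\ell_{1}=g_{j_{1}}$. Now the monomial $w_{0}g_{i}=mg_{i}^{2}$ appears with nonzero coefficient in $-u_{i}g_{i}$. Since the generators of $\mathcal{A}_{1}$ are linearly independent, the only way this monomial can cancel in \eqref{eq:char-eq} is through a term in $+g_{k}u_{k}$ with $g_{k}$ equal to its first letter, and since the $g_{k}$ are distinct such $k$ is unique: $k=j_{1}$. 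This forces $u_{j_{1}}$ to contain $\ell_{2}\cdots\ell_{p}g_{i}^{2}$.

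I would then iterate. The inductive hypothesis at step $s\leq p$ is that $u_{j_{s}}$ contains the monomial
\[ w_{s}\deq \ell_{s+1}\cdots\ell_{p}\,g_{i}^{2}\,g_{j_{1}}\cdots g_{j_{s-1}}. \]
Applying the same cancellation analysis to $-u_{j_{s}}g_{j_{s}}$, which contains $-w_{s}g_{j_{s}}$, and invoking Lemma~\ref{lem:words-G} to ensure that the first letter $\ell_{s+1}$ of $w_{s}$ lies in $G$ (write $\ell_{s+1}=g_{j_{s+1}}$), yields the hypothesis at step $s+1$. After $p$ iterations $u_{j_{p}}$ contains $g_{i}^{2}g_{j_{1}}\cdots g_{j_{p-1}}$. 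One final application of the cancellation to the monomial $g_{i}^{2}g_{j_{1}}\cdots g_{j_{p}}$ is the decisive step: its first letter is now $g_{i}$, so the only possible source in $+g_{k}u_{k}$ is $k=i$, and this forces $u_{i}$ to contain $g_{i}g_{j_{1}}\cdots g_{j_{p}}=g_{i}m$, a word starting with $g_{i}$, as wanted.

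The converse is proved by a mirror argument: starting from $g_{i}m'\in\supp(u_{i})$ with $m'=\ell'_{1}\cdots\ell'_{q}$ (where Lemma~\ref{lem:words-G} now provides $\ell'_{q}\in G$), I would track the monomial $g_{i}\cdot g_{i}m'=g_{i}^{2}m'$ appearing in $+g_{i}u_{i}$ and iterate the cancellation \emph{from the right}, using the last letter of each monomial to identify the next index, eventually producing $m'g_{i}\in\supp(u_{i})$.

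The only subtle point, and the step where I would be most careful, is the bookkeeping of indices along the iteration. It relies on two facts that must be verified at every step: (i) the $g_{k}$ are pairwise distinct, so the equation ``$g_{k}=\ell$'' admits at most one solution $k$, making the choice of cancellation partner unambiguous; and (ii) Lemma~\ref{lem:words-G} guarantees that the relevant boundary letter at each stage (the next $\ell_{s+1}$, or $g_{i}$ at the terminal step) actually belongs to $G$, so the chain of forced cancellations cannot break down prematurely and always terminates with the required contribution on $u_{i}$.
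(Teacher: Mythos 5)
Your argument is correct, but it takes a genuinely different and considerably longer route than the paper's. The paper proves the lemma in a single cancellation step carried out entirely inside the \(i\)-th commutator: after discarding the trivial case \(m=g_{i}^{s}\), one considers the monomial \(g_{i}mg_{i}\) produced by \(g_{i}\cdot(mg_{i})\) in \(+g_{i}u_{i}\); since it both begins and ends with \(g_{i}\) it cannot be matched by any term coming from \(\comm{g_{k}}{u_{k}}\) with \(k\neq i\), nor by \(-(mg_{i})g_{i}\) (because \(g_{i}m\neq mg_{i}\) once \(m\) is not a power of \(g_{i}\)), so it must be cancelled by \(-(g_{i}m)g_{i}\), i.e. \(g_{i}m\in\supp u_{i}\) directly. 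You instead track the monomial \((mg_{i})g_{i}\) through a chain of \(p+1\) forced cancellations that passes through the supports of the other \(u_{j}\)'s, peeling one letter off the front at each stage; every step is justified correctly (the cancelling partner is unique because the \(g_{k}\) are pairwise distinct and one-sided multiplication by a generator is injective on monomials, and Lemma~\ref{lem:words-G}, applied afresh to each intermediate word, guarantees that the boundary letter needed to continue always lies in \(G\)). What your longer route buys is that it already exhibits, at this stage, the cyclic orbit structure on the set of pairs \((i,w)\) that the paper only constructs afterwards via Lemma~\ref{lem:cyc-ij} — your chain is exactly a traversal of such an orbit, in the reverse direction — and it treats the degenerate case \(m=g_{i}^{s}\) uniformly rather than by a separate (if trivial) reduction. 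The price is length: the paper's one-step argument is sharper and stays within a single commutator.
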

\begin{proof}
  Again let us fix \(i=0\). Notice that the lemma trivially holds if \(m =
  g_{0}^{s}\) for some \(s\in \bb{N}\), hence we can suppose that \(m\)
  contains a letter different from \(g_{0}\). Write \(u_{0} = mg_{0} + \chi\)
  for some \(\chi\in \mathcal{A}_{1}\) not containing \(-mg_{0}\). Equation
  \eqref{eq:char-eq} becomes
  \[ g_{0}mg_{0} + g_{0}\chi - m g_{0}g_{0} - \chi g_{0} + g_{1}u_{1} -
  u_{1}g_{1} + \dots + g_{n}u_{n} - u_{n}g_{n} = 0. \]
  Consider the term \(g_{0}mg_{0}\). It cannot be canceled by the second term
  (by hypothesis) and it cannot be canceled by terms coming from other
  commutators (because \(g_{i}\neq g_{0}\) for every \(i\neq 0\)). Neither can
  it be canceled by \(-mg_{0}g_{0}\), because as soon as \(m\) contains a
  generator different from \(g_{0}\) we have that \(mg_{0}-g_{0}m\neq 0\).
  Then it must be canceled by a term in \(-\chi g_{0}\), i.e. \(g_{0}m\in
  \supp \chi\). The opposite implication is shown in the same way.
\end{proof}
From these results it follows that each \(u_{i}\) is a word in \(\falg{G}\),
the free algebra generated by \(G\).
\begin{lem}
  \label{lem:cyc-ij}
  For each \(i=0\dots n\) and for each \(j=0\dots n\) with \(j\neq i\),
  \(u_{i}\) contains a word of the form \(mg_{j}\) if and only if \(u_{j}\)
  contains a word of the form \(g_{i}m\).
\end{lem}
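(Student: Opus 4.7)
The plan is to run an argument closely parallel to the proof of lemma \ref{lem:cyc-ii}, exploiting the fact that when $i\neq j$ the degenerate case $m=g_i^s$ which caused trouble there no longer arises.

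Without loss of generality, fix $i=0$ and $j=1$. Assume $u_0$ contains a word of the form $mg_1$, and decompose $u_0 = mg_1 + \chi$, where $\chi\in \mathcal{A}_1$ is chosen so that $mg_1\notin \supp \chi$. Substituting into equation \eqref{eq:char-eq} gives
\[ g_0 m g_1 + g_0 \chi - m g_1 g_0 - \chi g_0 + g_1 u_1 - u_1 g_1 + \sum_{k=2}^{n}(g_k u_k - u_k g_k) = 0. \]
The strategy is now to track the distinguished monomial $g_0 m g_1$, which begins with the letter $g_0$ and ends with the letter $g_1$, and to show that it can be cancelled only by a contribution coming from $-u_1 g_1$.

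The key step is to rule out, one by one, every other possibility: the term $g_0\chi$ cannot contribute because $mg_1\notin \supp\chi$; the terms $-mg_1 g_0$ and $-\chi g_0$ end in the letter $g_0\neq g_1$, so they cannot match; for $k\geq 2$ the term $g_k u_k$ begins with $g_k\neq g_0$ and the term $-u_k g_k$ ends with $g_k\neq g_1$; and finally $g_1 u_1$ begins with $g_1\neq g_0$. The only remaining candidate is $-u_1 g_1$, so $g_0 m g_1$ must be cancelled by a summand of the form $-(g_0 m)g_1$ appearing there. This forces $g_0 m \in \supp u_1$, which is exactly the desired conclusion.

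For the converse, one starts instead with $g_0 m\in \supp u_1$, writes $u_1 = g_0 m + \chi'$ with $g_0 m\notin \supp \chi'$, and repeats the analysis on the monomial $-g_0 m g_1$ appearing in $-u_1 g_1$; the same case check shows it can only be cancelled by a contribution from $g_0 u_0$, forcing $mg_1\in \supp u_0$. The argument is fully symmetric in $i$ and $j$ apart from the labeling. I do not expect any genuine obstacle here: unlike in lemma \ref{lem:cyc-ii}, the first and last letters of the tracked monomial $g_0 m g_1$ are automatically distinct, so we never need to worry about accidental internal cancellations of the form $mg_0 - g_0 m$, and the bookkeeping just outlined suffices.
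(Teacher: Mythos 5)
Your proposal is correct and follows essentially the same route as the paper: decompose $u_{0}=mg_{1}+\chi$, track the monomial $g_{0}mg_{1}$ in equation \eqref{eq:char-eq}, and eliminate every cancellation source except $-u_{1}g_{1}$ by comparing first and last letters. The paper compresses the case check into ``reasoning as in the previous proofs,'' whereas you spell it out explicitly and correctly note that the degenerate case $m=g_{i}^{s}$ from lemma \ref{lem:cyc-ii} cannot occur here since $g_{0}\neq g_{1}$.
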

\begin{proof}
  Fix \(i=0\), \(j=1\) and suppose \(u_{0} = mg_{1} + \chi\); equation
  \eqref{eq:char-eq} becomes
  \[ g_{0}mg_{1} + g_{0}\chi - mg_{1}g_{0} - \chi g_{0} + g_{1}u_{1} -
  u_{1}g_{1} + \sum_{k=2}^{n} (g_{k}u_{k} - u_{k}g_{k}) = 0. \]
  Then, reasoning as in the previous proofs, we see that the term
  \(g_{0}mg_{1}\) can be canceled only by \(-u_{1}g_{1}\), hence \(g_{0}m\)
  appears in \(u_{1}\). The converse is proved in the same way.
\end{proof}
Now let \(S_{\vec{u}}\) denote the (finite) set consisting of pairs \((i,w)\)
where \(i=0\dots n\) and \(w\in \supp u_{i}\). By the previous lemmas, given
an element \((i,w)\in S_{\vec{u}}\) with \(w = mg_{j}\) the pair
\((j,g_{i}m)\) is again in \(S_{\vec{u}}\). By repeating this operation a
sufficient number of times we must eventually return to the original pair
\((i,w)\), which means that on \(S_{\vec{u}}\) there is an action of a cyclic
group of some finite order. As this argument also holds for every other pair,
we conclude that the set \(S_{\vec{u}}\) is partitioned into a finite number
of orbits of cyclic groups \(C_{k_{1}}, \dots, C_{k_{s}}\) for some integers
\(k_{1}, \dots, k_{s}\in \bb{N}\).
\begin{ex}
  \label{ex:bab}
  Let \(n=1\), \(G = \{a,b\}\) and consider the pair \(\vec{u} = (bab+bb,
  aba+ab+ba)\) satisfying \([a,u_{0}] + [b,u_{1}] = 0\). Then
  \[ S_{\vec{u}} = \{(0,bab), (0,bb), (1,aba), (1,ab), (1,ba)\}. \]
  There are two orbits, the first given by
  \[ (0,bab)\rightarrow (1,aba)\rightarrow (0,bab) \]
  and the second by
  \[ (0,bb)\rightarrow (1,ab)\rightarrow (1,ba)\rightarrow (0,bb). \]
\end{ex}
\begin{proof}[Proof of theorem \ref{teo:sol-char-eq}]
  One direction is immediate: if \(u_{i} = \frac{\partial f}{\partial
    g_{i}}\) then by \cite[Prop. 11.5.4]{ginz05} we have that
  \[ \sum_{i=0}^{n} \comm{\frac{\partial f}{\partial g_{i}}}{g_{i}} = 0 \]
  which is equation \eqref{eq:char-eq}. For the converse, let \(S_{\vec{u}}\)
  be as above. Take \((k,w)\in S_{\vec{u}}\) and let \(O_{w}\) be the orbit of
  \((k,w)\) under the action of the corresponding cyclic factor. For each
  \(i=0\dots n\), define \(u_{i}^{w}\) to be the sum of monomials in \(u_{i}\)
  that belongs to \(O_{w}\). Then the necklace word \(f^{w}\deq g_{k}w\) is
  such that
  \[ \frac{\partial f^{w}}{\partial g_{i}} = c_{w} u_{i}^{w} \]
  for every \(i\), where \(c_{w}\in \bb{N}\) is some combinatorial factor
  which depends only on the structure of the particular monomial \(w\) chosen.
  By repeating this construction for every orbit and summing all the necklace
  words so obtained (divided by \(c_{w}\) if necessary) we get the required
  primitive for \(\vec{u}\).
\end{proof}
In example \ref{ex:bab} above, we can take for instance the pair \((0,bab)\)
in the first orbit, which produces the necklace word \(f_{1} = abab\) with
\[ \frac{\partial }{\partial a} f_{1} = 2bab \quad\text{ and }\quad
\frac{\partial }{\partial b} f_{1} = 2aba,  \]
and the pair \((1,ba)\) in the second orbit, which gives the necklace word
\(f_{2} = bba\) with
\[ \frac{\partial }{\partial a} f_{2} = bb \quad\text{ and }\quad
\frac{\partial }{\partial b} f_{2} = ba + ab. \]
Then it is immediate to check that \(f = \frac{1}{2}abab + bba\) is in fact a
primitive for the pair \(\vec{u} = (bab+bb, aba+ab+ba)\).

\section{Other possible choices of the non-commutative symplectic form}
\label{other-nc}

In the introduction we pointed out that for each \(r>1\) there are many
quivers whose double coincides with \(\Q_{r}\); choosing a quiver different
from \(Z_{r}\) will alter the non-commutative symplectic form on
\(\bb{C}\Q_{r}\) and consequently its group of (tame) symplectomorphisms.

For an extreme example of this phenomenon, suppose that we make the
(apparently very natural) choice of taking the unstarred arrows in \(\Q_{r}\)
all oriented in the same direction, e.g. by selecting the arrows \(d_{1},
\dots, d_{r}\) and putting \(d_{\alpha}^{*} = b_{\alpha}\) for every \(\alpha
= 1\dots r\). The moment element in \(\bb{C}\Q_{r}\) determined by this choice
is
\[ \tilde{c}_{r} = [a,a^{*}] + \sum_{\alpha=1}^{r}
[d_{\alpha},d_{\alpha}^{*}], \]
which can be decomposed as \(\tilde{c}_{r}^{1} + \tilde{c}_{r}^{2}\) with
\(\tilde{c}_{r}^{1} = [a,a^{*}] + \sum_{\alpha} d_{\alpha}d_{\alpha}^{*}\) and
\(\tilde{c}_{r}^{2} = -\sum_{\alpha} d_{\alpha}^{*}d_{\alpha}\). 

Now let \(\psi\) be an automorphism of \(\bb{C}\Q_{r}\) fixing the arrows
\((a, d_{1}, \dots, d_{r})\). Let us write as usual \(\psi(a^{*}) = a^{*} +
h\) for some \(h\in \mathcal{A}_{1}\) and \(\psi(d_{\alpha}^{*}) =
d_{\alpha}^{*} + s_{\alpha}\) for some \(s_{1}, \dots, s_{r}\in
\mathcal{A}_{21}\). Then \(\psi\) fixes \(\tilde{c}_{r}^{2}\) if and only if
\begin{equation}
  \label{eq:22}
  \sum_{\alpha=1}^{r} s_{\alpha}d_{\alpha} = 0.
\end{equation}
As \(\mathcal{A}_{21}\) is a free right \(\mathcal{A}_{1}\)-module we can
write \(s_{\alpha} = \sum_{\beta} b_{\beta}u_{\beta\alpha}\) for some unique
coefficients \(u_{\beta\alpha}\in \mathcal{A}_{1}\), so that equation
\eqref{eq:22} becomes
\[ \sum_{\alpha,\beta=1}^{r} b_{\beta} u_{\beta\alpha} d_{\alpha} = 0. \]
Multiplying by \(b_{1}\) from the right and by \(d_{1}\) from the left we get
\[ \sum_{\alpha,\beta=1}^{r} e_{1\beta} u_{\beta\alpha} e_{\alpha 1} = 0. \]
As the \(e_{\alpha\beta}\) are independent in \(\mathcal{A}_{1}\), this
implies that \(\sum_{\beta} e_{1\beta} u_{\beta\alpha} = 0\) for every
\(\alpha = 1\dots r\), which in turn implies \(u_{\beta\alpha}=0\) for every
\(\alpha,\beta = 1\dots r\). We conclude that with this choice of the
non-commutative symplectic form every triangular symplectomorphism fixes all
the arrows \(d_{1}^{*}, \dots, d_{r}^{*}\). Clearly, this makes the resulting
group of tame symplectomorphisms hardly useful.

In general, reasoning as in Section \ref{s:nc} one can show that for any given
choice of a quiver \(Q_{r}\) such that \(\ol{Q}_{r} = \Q_{r}\) the
corresponding triangular symplectomorphisms of \(\bb{C}\Q_{r}\) will be
labeled by a (non-scalar) necklace word in the free algebra generated by all
the possible cycles \(1\to 1\) of length at most \(2\) that one can make in
\(Q_{r}\) (i.e. using only the unstarred arrows in \(\Q_{r}\)). In this sense
choosing the zigzag quiver \(Z_{r}\) makes the group
\(\TAut(\bb{C}\Q_{r};c_{r})\) the largest possible.

Another interesting choice, which gives the minimal (non-trivial) result, is
to take a single arrow going in one direction, e.g. \(x\deq d_{1}\), and all
the other arrows going in the opposite one, say \(y_{k}\deq b_{k+1}\) for
every \(k=1\dots r-1\). Let us call \(Q'_{r}\) the quiver obtained in this way
(notice that \(Q'_{2}\) is again the Bielawski-Pidstrygach quiver). Then we
have the \(r-1\) cycles \(\ell_{k}\deq xy_{k}\) in \(Q_{r}'\), so that
triangular symplectomorphisms are indexed by elements of the linear space
\[ \ol{\DR}^{0}(\falg{a, \ell_{1}, \dots, \ell_{r-1}}), \]
the automorphism corresponding to a necklace word \(f\) being given by
\[ (a^{*},x^{*},y_{k}^{*})\mapsto (a^{*} + \frac{\partial f}{\partial a},
x^{*} + \sum_{k=1}^{r-1} y_{k} \frac{\partial f}{\partial \ell_{k}}, y^{*}_{k}
+ \frac{\partial f}{\partial \ell_{k}}x). \]
It is not difficult to see that the resulting group of tame symplectomorphisms
of \(\bb{C}\ol{Q}'_{r} = \bb{C}\Q_{r}\) contains the group \(\mathcal{P}_{r}\)
defined in Section \ref{s:defP}. In fact, it seems that most (if not all) of
the results of the present paper do not actually depend on the particular
choice of the family of quivers \((Z_{r})_{r\geq 1}\), as long as one rules
out the trivial case shown at the beginning of this appendix.

\section*{Acknowledgements}

This work has been supported by FAPESP (Fundação de Amparo à Pesquisa do
Estado de São Paulo), thanks to the post-doctoral grant 2011/09782-6.


\providecommand{\bysame}{\leavevmode\hbox to3em{\hrulefill}\thinspace}
\providecommand{\MR}{\relax\ifhmode\unskip\space\fi MR }
\providecommand{\MRhref}[2]{%
  \href{http://www.ams.org/mathscinet-getitem?mr=#1}{#2}
}
\providecommand{\href}[2]{#2}

\end{document}